\theoremstyle{plain}
\newtheorem{thm}{Theorem}[section]
\newtheorem{cor}[thm]{Corollary}
\newtheorem{lem}[thm]{Lemma}
\newtheorem{prop}[thm]{Proposition}
\theoremstyle{definition}
\newtheorem*{rmq}{Remark}
\newcommand{\ho}{\mathrm{H}}
\newcommand{\Oc}{\mathcal{O}}
\newcommand{\M}{\mathcal{M}}
\newcommand{\m}{\mathfrak{m}}
\newcommand{\X}{\mathcal{X}}
\newcommand{\Y}{\mathcal{Y}}
\newcommand{\Li}{\overline{\mathcal{L}}}
\newcommand{\li}{\mathcal{L}}
\newcommand{\Q}{\mathbb{Q}}
\newcommand{\Z}{\mathbb{Z}}
\newcommand{\bbC}{\mathbb{C}}
\newcommand{\pr}{\mathbb{P}}
\newcommand{\F}{\mathbb{F}}
\newcommand{\Ker}{\mathrm{Ker}}
\newcommand{\Spec}{\mathrm{Spec}}
\setlist[enumerate]{nosep}
\begin{document}

\title{
	The $\theta$-density in Arakelov geometry
	}
\author{Xiaozong WANG}
\address{Morningside Center of Mathematics, Chinese Academy of Sciences, No.55, Zhongguancun East Road, Beijing, 100190, China}
\email{xiaozong.wang@amss.ac.cn}

\begin{abstract}
	In this article, we construct a $\theta$-density for the global sections of ample Hermitian line bundles on a projective arithmetic variety. We show that this density has similar behaviour to the usual density in the Arakelov geometric setting, where only global sections of norm smaller than $1$ are considered. In particular, we prove the analogue by $\theta$-density of two Bertini kind theorems, on irreducibility and regularity respectively.
\end{abstract}

\maketitle


\section{Introduction}

The main results of this paper are Theorem \ref{mainirred} and Theorem \ref{mainvartheta}.
\subsection{Arithmetic Bertini theorems}
In the classical setting, Bertini theorems are widely used to find closed subvarieties of a projective variety sharing similar geometric properties with the original one. Here geometric properties can mean smoothness, irreducibility, connectness etc. 

Over infinite fields, such closed subvarieties can be found by choosing an embedding of the projective variety into some projective space and using the hyperplane sections to cut the variety. The infiniteness of the hyperplanes guarantees that we can always find good ones whose intersection with the projective variety suffices our need for the geometric property. We can find detailed descriptions of this case in \cite{Jo83}.

Over finite fields, we need to find good hypersurface sections rather than hyperplane sections. This is due to the fact that a projective space over a finite field possesses only finitely many hyperplanes, which makes it possible that no hyperplane has good intersection with the variety. The existence of a good hypersurface section is usually given by a density result. As we can see in \cite{Po04} by Poonen, the existence of a hypersurface whose intersection with a smooth subvariety on a projective space is smooth is guaranteed by the fact that the proportion of such hypersurfaces among all hypersurfaces of degree $d$ has a non-zero limit when $d$ tends to infinity. We also have a similar result in \cite{CP16} by Charles and Poonen for irreducibility. 
\bigskip

It is then of interest to consider the arithmetic case, i.e. to look for closed subvarieties of a projective arithmetic variety which share similar geometric properties with it. Here a projective arithmetic variety means an integral separated scheme of finite type which is flat and projective over $\Spec\ \Z$. Let $\X$ be a projective arithmetic variety of dimension $n$, and let $\Li$ be an ample Hermitian line bundle over $\X$. An ample Hermitian line bundles $\Li=(\li, \left\lVert \cdot \right\rVert)$ on $\X$ is an ample line bundle $\li$ equipped with a Hermitian metric $\left\lVert \cdot \right\rVert$ on the restriction $\li_{\bbC}$ to the fiber $\X(\bbC)$, which satisfies the following conditions :
\begin{enumerate}[label=\roman*)]
	\item $\li$ is ample over $\Spec\ \Z$ ;
	\item $\Li$ is semipositive on the complex analytic space $\X(\bbC)$ ;
	\item for any $d\gg 0$, $\ho^0(\X,\Li^{\otimes d})$ is generated by the set $\mathrm{H}^0_{\mathrm{Ar}}(\X, \Li^{\otimes d})$ of effective sections, i.e. sections of norm strictly smaller than $1$.
\end{enumerate}
This definition is given by Zhang in \cite{Zh95}. In \cite{Ch21}, Charles defined, for a subset $\mathcal{P}\subset \bigcup_{d>0} \mathrm{H}^0(\X, \Li^{\otimes d})$, the density of $\mathcal P$, when exists, as the limit
\begin{eqnarray*}
	\mu(\mathcal{P})=\lim_{d\rightarrow \infty} \frac{\#\left( \mathcal{P}\cap \mathrm{H}^0_{\mathrm{Ar}}(\X, \Li^{\otimes d}) \right)}{\#\mathrm{H}^0_{\mathrm{Ar}}(\X, \Li^{\otimes d})}.
\end{eqnarray*}
Here
\[
	\mathrm{H}^0_{\mathrm{Ar}}(\X, \Li^{\otimes d}):=\{ \sigma\in \mathrm{H}^0(\X, \Li^{\otimes d})\ ;\ \left\lVert \sigma \right\rVert<1  \}.
\]
We may call this density the \emph{Arakelov density}. In the same article, he showed that the Arakelov density of the subset
\begin{eqnarray*}
	\{ \sigma \in \bigcup_{d>0}\mathrm{H}^0(\X, \Li^{\otimes d})\ ;\ \mathrm{div}(\sigma) \text{\ is irreducible}\}
\end{eqnarray*}
is equal to $1$. 

We have a similar result on regularity in \cite{Wa22}, which is for some constant $\varepsilon>0$, the subset $\mathcal{P}_A=\bigcup_{d> 0}\mathcal{P}_{d,p\leq e^{\varepsilon d}}$, where
\[
	\mathcal{P}_{d,p\leq e^{\varepsilon d}}:=\left\{ \sigma\in \ho^0(\X,\Li^{\otimes d})\ ;\ \begin{array}{ll}
	\mathrm{div}\sigma \text{ has no singular point of residual} \\
	\text{characteristic no larger than } e^{\varepsilon d}
	\end{array}\right\},
\]
is of Arakelov density $\zeta_{\X}(1+n)^{-1}$.

\subsection{$\theta$-density}
We may also define another density of sections in $\bigcup_{d>0}\mathrm{H}^0(\X, \Li^{\otimes d})$ using the $\theta$-invariants of a Hermitian $\Z$-lattice. A Hermitian $\Z$-lattice $\overline{E}=(E, \lVert \cdot \rVert)$ is a $\Z$-lattice $E$ equipped with a Hermitian norm. See \cite{Bo20} for a detailed introduction by Bost. For a Hermitian lattice $\overline{E}$, its $\theta$-invariants are defined as
\[
	h^0_{\theta}(\overline{E}):=\log \Big(\sum_{\sigma\in \overline{E}}e^{-\pi\lVert \sigma\rVert^2}\Big),
\]
and
\[
	h^1_{\theta}(\overline{E}):=h^0_{\theta}(\overline{E}^{\vee}),
\]
where $\overline{E}^{\vee}$ is the dual lattice of $\overline{E}$.

In the setting of Arakelov geometry, where we consider a projective arithmetic variety $\X$ equipped with a Hermitian line bundle $\Li$, the $\theta$-invariants of $\Li$ are defined as
\[
	h^0_{\theta}(\X,\Li)=h^0_{\theta}\left(\ho^0(\X,\Li)\right)=\log\Big(\sum_{\sigma\in \ho^0(\X,\Li)}e^{-\pi\lVert \sigma\rVert^2}\Big),
\]
and
\[
	h^1_{\theta}(\X,\Li)=h^1_{\theta}\left(\ho^0(\X,\Li)\right)=h^0_{\theta}\left(\ho^0(\X,\Li)^{\vee}\right).
\]
Here $\ho^0(\X,\Li)$, equipped with the sup norm, is a Hermitian $\Z$-lattice. 

In Arakelov geometry, one often uses 
\[
	h_{\mathrm{Ar}}^0(\X, \Li):=\log\left( \#\mathrm{H}^0_{\mathrm{Ar}}(\X, \Li) \right)
\]
as the analogue of $h^0(X,\li)$ for a line bundle $\li$ on a projective variety $X$ over a field in the classical algebraic geometry. 
The $\theta$-invariant $h^0_{\theta}(\X,\Li)$ behaves better than $h_{\mathrm{Ar}}^0(\X, \Li)$ in various aspects. In particular it satisfies the Poisson-Riemann-Roch formula, and has good subadditivity property with respect to short exact sequences without introducing error terms (see Section 2.2.2 and Section 3.3.1 of \cite{Bo20} for these results). We define the $\theta$-density of a subset $\mathcal{P}$ of $\bigcup_{d> 0}\ho^0(\X, \Li^{\otimes d})$ following the idea of the construction of $\theta$-invariants. The idea is to consider all sections in $\bigcup_{d> 0}\ho^0(\X, \Li^{\otimes d})$ with proper weights, rather than only the effective ones. 

For a fixed ample Hermitian line bundle $\Li$, we consider a subset $\mathcal{P}=\bigcup_{d> 0}\mathcal{P}_d$ of $\bigcup_{d> 0}\ho^0(\X, \Li^{\otimes d})$, where $\mathcal{P}_d\subset \ho^0(\X, \Li^{\otimes d})$. We say that $\mathcal{P}$ has \emph{$\theta$-density} $\rho$ for some $0\leq \rho\leq 1$ if
\[
	\lim_{d\rightarrow \infty}  \frac{\sum_{\sigma\in\mathcal{P}_d}\exp(-\pi\lVert\sigma\rVert^2)}{ \sum_{\sigma \in \mathrm{H}^0(\X, \overline{\mathcal{L}}^{\otimes d})} \exp(-\pi \lVert\sigma\rVert^2) }=\rho.
\]
Then the $\theta$-density of $\mathcal{P}$ is denoted by $\mu_{\theta}(\mathcal{P})$. Similarly, we define the upper $\theta$-density $\overline{\mu_{\theta}}(\mathcal{P})$ (resp. lower $\theta$-density $\underline{\mu_{\theta}}(\mathcal{P})$) as the upper limit (resp. lower limit), when exists, of the $\theta$-proportion $\frac{\sum_{\sigma\in\mathcal{P}_d}\exp(-\pi\lVert\sigma\rVert^2)}{ \exp(h_{\theta}^0(\X, \overline{\mathcal{L}}^{\otimes d})) }$ as $d$ tends to infinity. 
\bigskip

The main theorems of this paper are the following.

\begin{thm}\label{mainirred}
	Let $\X$ be an integral projective arithmetic variety of dimension $n\geq 2$, and let $\Li$ be an ample Hermitian line bundle on $\X$. Set
	\[
		\mathcal{P}_{\mathrm{irr}}=\left\{ \sigma \in \bigcup_{d>0} \mathrm{H}^0(\X, \Li^{\otimes d})\ ; \ \mathrm{div}\sigma \ \text{is irreducible}\right\}.
	\]
	Then we have 
	\[
		\mu_{\theta}(\mathcal{P}_{\mathrm{irr}})=1.
	\]
\end{thm}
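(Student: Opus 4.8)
The plan is to mimic the strategy of Charles in \cite{Ch17}, replacing the uniform count over effective sections by the $\theta$-weighted sum, and to reduce the statement to a comparison between the $\theta$-weighted mass of the "bad" sections and the total $\theta$-weighted mass $\sum_{\sigma\in\ho^0(\X,\Li^{\otimes d})}e^{-\pi\lVert\sigma\rVert^2}=\exp\bigl(h^0_\theta(\X,\Li^{\otimes d})\bigr)$. A section $\sigma\in\ho^0(\X,\Li^{\otimes d})$ has reducible divisor if either $\mathrm{div}\,\sigma$ is disconnected, or it is connected but non-irreducible (i.e. has a multiple component or two components meeting); so I would split $\mathcal{P}_{\mathrm{irr}}^{c}$ into the sections whose divisor is not geometrically connected on some fiber, and those whose divisor is connected but reducible, and bound the $\theta$-mass of each.

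First I would set up the infrastructure of "$\theta$-densities over a finite set of conditions". For a fixed prime $p$ and a fixed closed point or a fixed closed subscheme $Z\subset\X$, restriction gives a map $\ho^0(\X,\Li^{\otimes d})\to\ho^0(Z,\Li^{\otimes d}|_Z)$, and by the good behaviour of $h^0_\theta$ under short exact sequences (Section 3.3.1 of \cite{Bo15}) the $\theta$-mass of a fiber of this map over a given value, divided by the total $\theta$-mass, is controlled by $\exp\bigl(-h^0_\theta(Z,\Li^{\otimes d}|_Z)+(\text{error})\bigr)$; for $d\gg 0$ and $Z$ of positive dimension this tends to $0$, and one gets an asymptotic equidistribution statement for the restriction. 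This is the $\theta$-analogue of the elementary counting lemma "a positive proportion of sections avoids prescribed behaviour on a fixed closed subscheme". Using Poisson–Riemann–Roch one can make the rate of decay explicit in terms of $\deg$ and the arithmetic Hilbert function of $Z$.

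Next, following the structure of the irreducibility argument, I would bound the contribution of sections whose divisor fails to be irreducible. If $\mathrm{div}\,\sigma$ is reducible then either (a) it is reducible after base change to some $\F_p$ for a small prime $p$ (say $p\le e^{\varepsilon d}$), or (b) the generic fiber $\mathrm{div}(\sigma)_\Q$ is reducible, or (c) $\mathrm{div}(\sigma)$ is irreducible on all small fibers and on the generic fiber but still globally reducible — a situation one rules out by a connectedness/Stein-factorization argument as in \cite{Ch17} together with control of the "horizontal" components via heights. For (a), on each fiber $\X_{\F_p}$ the proportion (in the $\theta$-sense, via the restriction map $\ho^0(\X,\Li^{\otimes d})\to\ho^0(\X_{\F_p},\Li^{\otimes d}|_{\F_p})$ whose target is equidistributed by the previous paragraph) of $\sigma$ with $\mathrm{div}(\sigma)_{\F_p}$ reducible decays like $p^{-cd}$ by Poonen/Charles–Poonen-type estimates over the finite field $\F_p$, uniformly enough that summing over $p\le e^{\varepsilon d}$ still gives something tending to $0$; for $p>e^{\varepsilon d}$ one uses instead a bound on the number of geometric components forced by the degree of $\mathrm{div}\,\sigma$ together with a large-sieve/arithmetic-Bezout input. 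For (b) one uses that a reducible divisor on $\X_\Q$ forces $\sigma$ to lie, modulo each small prime, in a subvariety of the space of sections of codimension growing with $d$, so again the $\theta$-mass is negligible.

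The main obstacle, as in the Arakelov-density setting, will be to make all these estimates \emph{uniform in the prime $p$} and to handle the range of large primes: a single section of $\Li^{\otimes d}$ whose divisor is, say, geometrically reducible can have this failure witnessed only at one large prime or only in the archimedean/horizontal direction, and the $\theta$-weight does not a priori suppress sections of large norm as sharply as the hard cutoff $\lVert\sigma\rVert<1$ does. The key new technical point is therefore to show that the $\theta$-mass is concentrated on sections of norm $O(\sqrt{d})$ — which follows from Poisson–Riemann–Roch and semipositivity, since $h^0_\theta(\X,\Li^{\otimes d})=\widehat{\deg}\,\Li^{\otimes d}\cdot(\cdots)/n!+o(d^n)$ forces the bulk of the Gaussian mass to sit where $\pi\lVert\sigma\rVert^2=O(\mathrm{rk}\,\ho^0(\X,\Li^{\otimes d}))=O(d^{n-1})$ — and then to rerun Charles's effective-section arguments with this polynomial-in-$d$ norm bound in place of the bound $1$, checking that the arithmetic Bézout and sieve estimates still close. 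Once concentration on norm $O(d^{N})$ is in hand, the remaining steps are parallel to \cite{Ch17}, and the conclusion $\mu_\theta(\mathcal{P}_{\mathrm{irr}})=1$ follows by letting $d\to\infty$.
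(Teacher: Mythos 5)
Your proposal captures the right general philosophy (reduce to Bertini over finite residue fields via restriction maps, with the $\theta$-analogues of the restriction estimates in place of the hard cutoff $\lVert\sigma\rVert<1$), but it misses the two structural points that actually make the paper's proof work, and the fallback it proposes instead is not sufficient.

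First, in the case $\dim\X>2$, the paper does \emph{not} sum over a range of primes to rule out reducibility fiber by fiber. It uses a single well-chosen prime $p$: once one knows (i) that $\mathrm{div}(\sigma)$ has no vertical component and (ii) that $\mathrm{div}(\sigma|_{\X_{0,p}})$ is irreducible on a fixed irreducible component $\X_{0,p}$ of the fiber, then every horizontal irreducible component of $\mathrm{div}(\sigma)$ must meet $\X_{0,p}$, forcing there to be exactly one such component, hence $\mathrm{div}(\sigma)$ is globally irreducible. Condition (ii) for the single prime $p$ has $\theta$-density $1$ by Corollary \ref{cor} (applying Charles--Poonen over $\F_p$, which requires $\dim\X_p\ge 2$, i.e. $n\ge 3$), and condition (i) has $\theta$-density $1$ by Lemma \ref{ver}. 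Your plan of summing $p^{-cd}$-type bounds over all $p\le e^{\varepsilon d}$ and then adding a large-sieve/arithmetic-B\'ezout input for large $p$ is both unnecessary here and considerably harder to make work; in particular the ``$p^{-cd}$'' decay rate for the proportion of reducible divisors over $\F_p$ is not what Charles--Poonen gives, and you would need a uniform-in-$p$ version that the paper deliberately avoids needing.

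Second, and more seriously, the case $\dim\X=2$ is entirely missing from your proposal. When $n=2$ the fibers $\X_p$ are curves, and the finite-field Bertini irreducibility theorem does not apply; the paper therefore gives a completely separate argument in that case, passing to a resolution of singularities $\widetilde\X\to\X$, fixing a finitely generated subgroup $N\subset\widehat{\mathrm{Pic}}_\omega(\widetilde\X)$, decomposing a section with reducible divisor as $\sigma_1\cdot\sigma_2$ with controlled $L^2$ and sup norms (Lemma \ref{decomp}), and then running an explicit arithmetic intersection-theoretic estimate on the $\theta$-mass of the reducible locus using $h^0_\theta$ bounds for the $\Li_1(P(d,\sigma_2))$. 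None of this is subsumed by a ``connectedness/Stein factorization as in \cite{Ch17}'' reference. Finally, the ``concentration of the $\theta$-mass on norm $O(\sqrt d)$'' step you propose as your key new technical point is not how the paper proceeds, and it would not by itself substitute for the missing surface case: the difficulty there is not the size of $\lVert\sigma\rVert$ but the failure of the finite-field input, which has to be replaced by a genuinely different argument.
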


\begin{thm}\label{mainvartheta}
	Let $\X$ be a regular projective arithmetic variety of dimension $n$, and let $\Li$ be an ample line bundle on $\X$. We can find a constant $\varepsilon>0$ such that writing
	\[
		\mathcal{P}_{d,p\leq e^{\varepsilon d}}:=\left\{ \sigma\in \ho^0(\X,\Li^{\otimes d})\ ;\ \begin{array}{ll}
		\mathrm{div}\sigma \text{ has no singular point of residual} \\
		\text{characteristic smaller than } e^{\varepsilon d}
		\end{array}\right\}
	\]
	and $\mathcal{P}_A=\bigcup_{d > 0} \mathcal{P}_{d,p\leq e^{\varepsilon d}}$, we have
	\[
		\mu_{\theta}(\mathcal{P}_A)=\zeta_{\X}(1+n)^{-1}.
	\]
\end{thm}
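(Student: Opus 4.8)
The plan is to adapt the proof of the analogous Arakelov-density statement from \cite{Wa20} to the $\theta$-weighted setting, using the good subadditivity properties of $\theta$-invariants to replace the counting estimates over effective sections by Gaussian-weighted sums over all sections. The key structural fact is that, for a prime $p$ and a closed point $x\in\X$ of residual characteristic $p$, the condition that $\mathrm{div}\,\sigma$ be singular at $x$ is a congruence condition: it says that the image of $\sigma$ in $\ho^0(\X,\Li^{\otimes d}\otimes\Oc_{2x})$ (the jet space of order one at $x$) lies in a proper subspace $W_x$ of codimension $n$ over the residue field $\kappa(x)$. So for each $d$, $\mathcal{P}_{d,p\le e^{\varepsilon d}}$ is the set of $\sigma$ avoiding finitely many such sublattices.

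First I would set up, for a finite set $S$ of closed points, a $\theta$-version of the ``independence'' estimate: if $\sigma$ is forced to avoid the sublattice $\Ker\big(\ho^0(\X,\Li^{\otimes d})\to W_x^{c}\big)$ at each $x\in S$, then the Gaussian mass of the good sections factors, to leading order, as $\prod_{x\in S}\big(1-\#\kappa(x)^{-n-1}+o(1)\big)$ times the total mass. Concretely, this follows from the Poisson summation / subadditivity toolkit in \cite{Bo15}: the evaluation map $\ho^0(\X,\Li^{\otimes d})\to\ho^0(\X,\Li^{\otimes d}\otimes\Oc_{2x})$ is surjective for $d\gg 0$ (ampleness), its kernel is a Hermitian sublattice, and the $\theta$-mass of a union of cosets of this kernel is controlled by $h^0_\theta$ of the quotient lattice, which for $d$ large is essentially $(\#\kappa(x))^{n+1}$ uniformly. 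Summing over the residue field, the local good-fraction is $1-(\#\kappa(x))^{-(n+1)}$, and multiplicativity over distinct points of bounded height follows because the combined evaluation map stays surjective with kernel of predictable covolume. This is where the bulk of the work lies, and where I expect the main obstacle: controlling the error terms $o(1)$ uniformly in the growing set $S=\{x : p(x)\le e^{\varepsilon d}\}$, since the number of such points grows with $d$ and one needs the per-point multiplicative errors to be summable; the choice of the threshold $e^{\varepsilon d}$ (rather than something larger) is exactly what makes this feasible, and the argument presumably mirrors the height/volume bookkeeping already carried out for the Arakelov density in \cite{Wa20}, now with $\theta$-masses in place of cardinalities of effective-section sets.

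Next I would handle the ``tail'': closed points $x$ with $p(x)\le e^{\varepsilon d}$ but of large residue degree or large height must contribute negligibly, and one also needs to show that imposing the bounded-characteristic restriction rather than ruling out all singular points is harmless in the limit—this is where the zeta value appears. Passing to the limit $d\to\infty$, the product $\prod_{x:\,p(x)\le e^{\varepsilon d}}\big(1-\#\kappa(x)^{-(n+1)}\big)$ converges to $\prod_{\text{all closed }x\in\X}\big(1-\#\kappa(x)^{-(n+1)}\big)=\zeta_\X(n+1)^{-1}$, using the Euler product definition of the arithmetic zeta function $\zeta_\X$. The sieve structure is: lower bound by inclusion–exclusion over a finite truncation of $S$ (finitely many local conditions, handled by the $\theta$-subadditivity estimate above), and upper bound by a Bombieri–Davenport / large-sieve type argument bounding the $\theta$-mass of sections that are singular at \emph{some} point of medium characteristic—again following the template of \cite{Wa20} but with Gaussian weights.

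Finally I would assemble the two bounds. The clean features of $h^0_\theta$ over the classical effective-section count—exact subadditivity in short exact sequences, the Poisson–Riemann–Roch formula, and the fact that the $\theta$-mass of a full lattice and of its finite-index sublattices compare with no error term—should make the $\theta$-density computation formally \emph{smoother} than the Arakelov one, so that the genuinely new content is only the re-derivation of the two or three key lattice-mass estimates in $\theta$-form; everything else transports. The single hardest point, as noted, is the uniform control of error terms as the conditioning set $S$ grows with $d$, and I would isolate that as a standalone lemma on the $\theta$-mass of a sublattice of $\ho^0(\X,\Li^{\otimes d})$ cut out by congruence conditions modulo an integer $m\le e^{\varepsilon d}$, with explicit dependence on $m$ and $d$.
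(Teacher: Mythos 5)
Your high-level sieve structure is right, and you correctly isolate the key new ingredient: a quantitative comparison, with error controlled in $m$ and $d$, between the $\theta$-mass of a congruence sublattice of $\mathrm{H}^0(\X,\Li^{\otimes d})$ modulo an integer $m\leq e^{\delta d}$ and the naive proportion. That lemma is indeed the crux of the paper's argument (its Corollary \ref{thetadensity}), and your plan to split small from medium residual characteristics mirrors the paper's decomposition into $\mathcal{P}_B$ (primes $p\leq d^{1/(n+1)}$) plus the complement $\mathcal{Q}^m$ (primes $d^{1/(n+1)}<p\leq e^{\varepsilon d}$). The zeta value does arise as the Euler product $\prod_p\zeta_{\X_p}(n+1)^{-1}$, as you anticipate.

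Where you diverge from the paper, and where I see a genuine obstacle, is in the granularity of the sieve. You propose to sieve \emph{closed point by closed point}, conditioning directly on the jet maps $\mathrm{H}^0(\X,\Li^{\otimes d})\to\mathrm{H}^0(\X,\Li^{\otimes d}\otimes\Oc_{2x})$ and multiplying local factors over all closed points with $p(x)\leq e^{\varepsilon d}$. The trouble is that for a single prime $p$ there are infinitely many closed points of $\X_p$, of unbounded residue degree, so "avoiding $W_x$ for every $x$" is not a finite intersection of congruence conditions, and the Poonen trichotomy (low-, medium-, high-degree points, with Lang--Weil and a clever truncation for the high-degree tail) would need to be rederived from scratch in the $\theta$-weighted setting. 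The paper avoids this entirely: the condition that $\mathrm{div}\,\sigma$ be regular along $\X_p$ factors through the \emph{finite} quotient $\mathrm{H}^0(\X_{p^2},\Li^{\otimes d})$, so the whole fiber-level Poonen argument can be cited as a black box from \cite{Wa20} (Propositions 4.2 and 6.1 there, quoted as Propositions \ref{fiber} and \ref{singmed}), and the only new $\theta$-estimate needed is the uniform transfer lemma for modulo-$N$ congruence sublattices with $N=\prod_{p\leq r}p^2$. Working prime-by-prime rather than point-by-point is what makes the Arakelov-to-$\theta$ upgrade essentially painless; your route would work in principle but duplicates the hardest finite-field bookkeeping at the level where the Gaussian weights live, which is both unnecessary and the place where error control is least transparent. (A small arithmetic slip: the singularity condition at a regular closed point of an $n$-dimensional $\X$ is codimension $n+1$ in the order-one jet space, not $n$; your later exponent $-(n+1)$ is the correct one.)
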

This theorem gives a consequence on the density of sections with regular divisor :
\begin{cor}
	Let $\X$ be a regular projective arithmetic variety of dimension $n$, and let $\Li$ be an ample line bundle on $\X$. Set
	\[
		\mathcal{P}_{d}:=\left\{ \sigma\in \ho^0(\X,\Li^{\otimes d})\ ;\ 
		\mathrm{div}\sigma \text{ is regular} \right\}
	\]
	and $\mathcal{P}=\bigcup_{d> 0} \mathcal{P}_{d}$. We have
	\[
		\overline{\mu_{\theta}}(\mathcal{P})\leq\zeta_{\X}(1+n)^{-1},
	\]
	where $\overline{\mu_{\theta}}(\mathcal{P})$ is the upper $\theta$-density of $\mathcal{P}$.
\end{cor}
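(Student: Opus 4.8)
The corollary follows from Theorem \ref{mainvartheta} by a monotonicity argument, once we observe that a regular divisor has no singular points whatsoever, and in particular no singular point of residual characteristic below any given threshold. So the plan is: fix any $\varepsilon > 0$, show the set-theoretic inclusion $\mathcal{P} \subseteq \mathcal{P}_A$ degree by degree, then transfer this to an inequality of upper $\theta$-densities and invoke the theorem.

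\textbf{Key steps.} First I would record the elementary monotonicity property of the $\theta$-density: if $\mathcal{Q} = \bigcup_{d>0} \mathcal{Q}_d$ and $\mathcal{Q}' = \bigcup_{d>0} \mathcal{Q}'_d$ with $\mathcal{Q}_d \subseteq \mathcal{Q}'_d \subseteq \ho^0(\X, \Li^{\otimes d})$ for every $d$, then for each $d$
\[
	\frac{\sum_{\sigma \in \mathcal{Q}_d} \exp(-\pi\lVert \sigma \rVert^2)}{\sum_{\sigma \in \ho^0(\X,\Li^{\otimes d})} \exp(-\pi\lVert\sigma\rVert^2)} \leq \frac{\sum_{\sigma \in \mathcal{Q}'_d} \exp(-\pi\lVert \sigma \rVert^2)}{\sum_{\sigma \in \ho^0(\X,\Li^{\otimes d})} \exp(-\pi\lVert\sigma\rVert^2)},
\]
since all summands are positive; taking $\limsup$ over $d$ gives $\overline{\mu_{\theta}}(\mathcal{Q}) \leq \overline{\mu_{\theta}}(\mathcal{Q}')$. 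Second, I would check the inclusion $\mathcal{P}_d \subseteq \mathcal{P}_{d,\, p \leq e^{\varepsilon d}}$: if $\mathrm{div}\,\sigma$ is regular then it has no singular points at all, hence a fortiori no singular point of residual characteristic smaller than $e^{\varepsilon d}$. Taking the union over $d$ yields $\mathcal{P} \subseteq \mathcal{P}_A$. Third, combining the two steps with Theorem \ref{mainvartheta},
\[
	\overline{\mu_{\theta}}(\mathcal{P}) \leq \overline{\mu_{\theta}}(\mathcal{P}_A) = \mu_{\theta}(\mathcal{P}_A) = \zeta_{\X}(1+n)^{-1},
\]
where the middle equality holds because a set with a $\theta$-density has upper $\theta$-density equal to it. This finishes the proof.

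\textbf{Main obstacle.} There is essentially no obstacle: the entire analytic content — the sieve estimate over small primes and the archimedean contribution producing the factor $\zeta_{\X}(1+n)^{-1}$ — is packaged inside Theorem \ref{mainvartheta}, and the corollary only uses it as a black box together with a trivial inclusion. The one point worth stating carefully is that the argument is valid for \emph{every} $\varepsilon > 0$, so no choice of $\varepsilon$ needs to be optimized; a single admissible $\varepsilon$ already gives the stated bound. It should also be noted that this method only gives an upper bound, since $\mathcal{P}$ is genuinely smaller than $\mathcal{P}_A$ (a section in $\mathcal{P}_A$ may still have singularities in large residual characteristic), which is why the corollary is phrased with $\overline{\mu_{\theta}}$ rather than an equality.
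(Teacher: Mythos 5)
Your proof is correct and is the intended argument (the paper states this corollary without proof, as an immediate consequence of Theorem \ref{mainvartheta}): the pointwise inclusion $\mathcal{P}_d \subseteq \mathcal{P}_{d,\, p\le e^{\varepsilon d}}$ together with positivity of the Gaussian weights gives $\overline{\mu_\theta}(\mathcal{P}) \le \overline{\mu_\theta}(\mathcal{P}_A) = \mu_\theta(\mathcal{P}_A) = \zeta_{\X}(1+n)^{-1}$. Your remark that a single admissible $\varepsilon$ suffices is also the right way to read the statement of Theorem \ref{mainvartheta}, whose proof fixes $\varepsilon$ in terms of the $\delta$ from Proposition \ref{medsingtheta}.
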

\begin{rmq}
	These results can be compared with their Arakelov density version, namely Theorem 1.5 in \cite{Ch21}, Theorems 1.1 and 1.2 in \cite{Wa22}. At least in the following two special cases, these two densities coincide.
\end{rmq}

It may be a frequent fact that the Arakelov density and the $\theta$-density coincide. In particular, we may see in Section \ref{restri} that for any subset $\mathcal{E}\subset \bigcup_{d> 0}\ho^0(\X,\Li^{\otimes d})$ which is the preimage of a subset $E\subset \bigcup_{d> 0}\ho^0(\X_N,\li^{\otimes d})$ over some closed subscheme $\X_N=\X\times_{\Spec\ \Z} \Spec (\Z/N\Z)$, the two densities $\mu_{Ar}(\mathcal{E})$, $\mu_{\theta}(\mathcal{E})$ of $\mathcal{E}$ always coincide with the limit 
\[
	\lim_{d\rightarrow \infty}\frac{\#\left( E\cap \ho^0(\X_N,\li^{\otimes d})\right)}{\#\ho^0(\X_N,\li^{\otimes d})}.
\]


\subsection{Previous results}
Bertini type theorems for arithmetic varieties are considered by various authors. In \cite{Mo95}, Moriwaki showed that if the projective arithmetic variety $\X$ has smooth generic fiber, then there exists a section $\sigma$ in $\ho_{\mathrm{Ar}}^0(\X,\Li^{\otimes d})$ for sufficiently large $d$ such that $\mathrm{div}\sigma_{\Q}$ is smooth. We can also find existence type results in \cite{Au01} and \cite{Au02} which says that if we permit base change of the type $g: \Spec\ \Oc_L\longrightarrow \Spec\ \Z$ where $\Oc_L$ is the ring of integers of the number field $L$, we can find a section $\sigma$ in $\ho^0(\X\times_{\Z}\Oc_L,\Li)$ with bounded height such that for any closed point $b\in \Spec\ \Oc_L$, $(\mathrm{div}\sigma)_b$ is smooth whenever $\X_{g(b)}$ is.

We can also find density type results, where the density considered is neither the Arakelov density nor the $\theta$-density. For example in \cite{Po04}, assuming the \emph{abc} conjecture, if $\X$ is a regular quasi-projective subscheme of $\pr^n_{\Z}$, Poonen showed that the density of sections $\sigma\in \bigcup_{d> 0}\ho^0(\pr^n_{\Z}, \Oc(d))$ with $\mathrm{div}\sigma\cap \X$ regular is equal to $\zeta_{\X}(1+\dim\X)^{-1}$. The density he uses relies on a choice of the system of coordinates on $\pr^n_{\Z}$ and does not involve Hermitian metric on the complex fiber.

\subsection{Strategy of proof}
We prove Theorems \ref{mainirred} and \ref{mainvartheta} following the Arakelov density version. The method is to translate the various estimates on the proportions in $\ho_{\mathrm{Ar}}^0(\X,\Li^{\otimes d})$ to the estimates on the $\theta$-proportions. The main tools are the estimates concerning the restriction maps and the estimates of $h^1_{\theta}$ in various settings.


For Theorem \ref{mainirred}, we need to treat the cases when the arithmetic variety $\X$ is an arithmetic surface and when $\dim \X>2$ separately. When $\dim \X>2$, the theorem can be deduced from the density result on one fiber $\X_p$, where we need to apply the results on restriction maps. In the surface case, we adapt the method in \cite{Ch21} on the bound of numbers on suitable decompositions of powers of line bundles on a regular arithmetic surface into product of two line bundles, and the bound on the $\theta$-proportion of sections coming from product of sections in these two line bundles. Here the bound on $h^1_{\theta}$ also plays an important role.

The proof of Theorem \ref{mainvartheta} is similar to the Arakelov density version in \cite{Wa22}, which relies on studies of proportion of bad sections on each fiber. The key to the proof is still the results on restriction maps so as to lift the proportions on the thickened fibers $\X_{p^2}$ to the $\theta$-proportions of sections on $\X$.

\subsection{Notations}

If $S$ is a finite set, we denote by $\#S$ its cardinality.
	
Let $\X$ be an arithmetic variety and $N$ a positive integer. The closed subscheme $\X\times_{\Spec\ \Z}\Spec(\Z/N\Z)$ is denoted by $\X_N$.

For an arithmetic variety $\X$ equipped with an ample Hermitian line bundle $\Li$, if $Y$ is a subscheme of $\X$ such that $Y_{\Q}\not=\emptyset$, we set $\ho^0(Y,\Li):=\ho^0(Y, \Li|_Y)$; if $Y_{\Q}$ is empty, we set $\ho^0(Y,\li):=\ho^0(Y, \li|_Y)$.

\subsection{Outline of the paper}

In Section \ref{restri}, we present the Hilbert-Samuel formula for the theta invariant $h_{\theta}^0(\X, \Li^{\otimes d}\otimes \M)$. We also prove the results on restriction morphisms that are needed for the proof of the main theorems.

Section \ref{Irred} is devoted for the proof of Theorem \ref{mainirred}, following the method in \cite{Ch21}.

Section \ref{Regular} is about the proof of Theorem \ref{mainvartheta}. We separate the closed points of $\X$ in two parts according to their residual characteristic. Applying the results proved in Section \ref{restri}, we calculate the density of sections whose divisor has singular points on each part. The proof follows the method in \cite{Wa22}.

\subsection{Acknowledgement}
We are grateful to François Charles for the interesting discussions on the subject of this article.

This project has received funding from the European Research Council (ERC) under the European Union’s Horizon 2020 research and innovation programme (grant agreement No 715747).

\section{Restriction morphisms}\label{restri}
\subsection{Hilbert-Samuel formula for $\theta$-invariants}
It is a well-known result that for an ample Hermitian line bundle $\Li$ on some projective arithmetic variety $\X$ of dimension $n$, when $d$ is sufficiently large, $\Li^{\otimes d}$ always have many global sections of small norm. Here we use the version of Charles. The following statement is Proposition 2.3 in \cite{Ch21}.
\begin{prop}\label{epsilon}
	Let $\X$ be a projective arithmetic variety, and let $\Li$ be an ample Hermitian line bundle on $\X$. Let $\overline{\mathcal{M}}=(\mathcal{M}, \lVert\cdot\rVert)$ be a Hermitian vector bundle on $\X$, and let $\overline{\mathcal{F}}$ be a coherent subsheaf of $\overline{\mathcal{M}}$. There exists a positive constant $\varepsilon_{\mathcal{F}}$ such that for any large enough integer $d$, $\ho^0(\X, \Li^{\otimes d}\otimes \overline{\mathcal{F}})\subset \ho^0(\X, \Li^{\otimes d}\otimes \overline{\mathcal{M}})$ has a basis consisting of sections with norm smaller than $e^{-\varepsilon_{\mathcal{F}}d}$.
	
	In particular, there exists $\varepsilon_0>0$ such that for any large enough integer $d$, $\ho^0(\X, \Li^{\otimes d})$ has a basis consisting of sections with norm smaller than $e^{-\varepsilon_0 d}$.
\end{prop}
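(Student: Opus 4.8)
The plan is to produce, for $d\gg 0$, a \emph{generating family} of the $\Z$-module $\ho^0(\X,\Li^{\otimes d}\otimes\overline{\mathcal{F}})$ consisting of sections of norm $\leq e^{-\varepsilon d}$ for some fixed $\varepsilon>0$, and then to upgrade this to a basis essentially for free. The upgrade is a lattice fact: if a lattice $L$ of rank $r$ is generated by vectors of norm $\leq B$, then it has a $\Z$-basis of vectors of norm $\leq \tfrac12\sqrt{r+3}\,B$. One proves this by induction on $r$: pick a shortest generator $v$, note $L\cap\R v=\Z w$ with $\|w\|\leq\|v\|\leq B$ (since $v\in\Z w$), project to $V/\R v$, where the image lattice is again generated by vectors of norm $\leq B$ and hence (induction) has a basis $\bar b_2,\dots,\bar b_r$ with $\|\bar b_i\|\leq c_{r-1}B$; lift each $\bar b_i$ to the shortest representative of its $L$-coset, which costs at most $\tfrac14\|w\|^2\leq\tfrac14 B^2$ in square norm, so that $\{w,b_2,\dots,b_r\}$ is a basis of $L$ and $c_r^2=c_{r-1}^2+\tfrac14$, i.e. $c_r=\tfrac12\sqrt{r+3}$. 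As $r=\dim_{\Q}\ho^0(\X_{\Q},\li_{\Q}^{\otimes d}\otimes\mathcal{F}_{\Q})=O(d^{\,n-1})$ by the Hilbert polynomial, the factor $\tfrac12\sqrt{r+3}$ is swallowed by replacing $\varepsilon$ with $\varepsilon/2$ once $d$ is large. So it suffices to exhibit the generating family (and if $\mathcal{F}_{\Q}=0$ the module vanishes for $d\gg 0$ and there is nothing to prove).

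\textbf{The base case $\mathcal{F}=\M=\Oc_\X$ (this is the ``in particular'').} Fix $D$ large enough that (a) the multiplication maps $\ho^0(\X,\li^{\otimes D})\otimes\ho^0(\X,\li^{\otimes m})\to\ho^0(\X,\li^{\otimes(D+m)})$ are surjective for all $m\geq D$ (true for $\li$ ample over $\Spec\ \Z$), and (b) $D$ is past the threshold in condition iii) of the definition of ampleness, so $\ho^0(\X,\li^{\otimes D})$ is generated as a $\Z$-module by finitely many effective sections $t_1,\dots,t_K$, i.e. $\|t_i\|_{\sup}<1$. Writing a large $d$ as $d=kD+e$ with $e\in[D,2D)$ and $k\geq d/(2D)$, and iterating (a), $\ho^0(\X,\li^{\otimes d})$ is spanned by products $u\cdot t_{i_1}\cdots t_{i_k}$ with $u$ drawn from a fixed finite spanning family of the finitely many lattices $\ho^0(\X,\li^{\otimes e})$, $e\in[D,2D)$. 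Since $\X(\bbC)$ is compact, $\delta:=-\log\max_i\|t_i\|_{\sup}>0$, and submultiplicativity of the supremum norm gives $\|u\cdot t_{i_1}\cdots t_{i_k}\|_{\sup}\leq C\,e^{-\delta k}\leq e^{-\varepsilon_0 d}$ for $d\gg 0$, with $\varepsilon_0=\delta/(2D)$. (Alternatively, the base case follows from the arithmetic Hilbert--Samuel theorem for $\overline\li$.)

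\textbf{The general case.} Since $\mathcal{F}$ is coherent and $\li$ ample, $\mathcal{F}\otimes\li^{\otimes a}$ is globally generated for a fixed $a\gg 0$; choosing generators yields a surjection $\Oc_\X^{\oplus N}\twoheadrightarrow\mathcal{F}\otimes\li^{\otimes a}$, hence, after twisting by $\li^{\otimes -a}$ and composing with $\mathcal{F}\hookrightarrow\M$, a morphism $\varphi\colon\bigoplus_{j=1}^{N}\li^{\otimes -a}\to\M$ with image $\mathcal{F}$; set $\mathcal{S}=\Ker\varphi$. Tensoring $0\to\mathcal{S}\to\bigoplus_{j=1}^{N}\li^{\otimes -a}\to\mathcal{F}\to 0$ by $\li^{\otimes d}$ and using Serre vanishing $\ho^1(\X,\mathcal{S}\otimes\li^{\otimes d})=0$ for $d\gg 0$, the induced map $\ho^0(\X,\li^{\otimes(d-a)})^{\oplus N}\to\ho^0(\X,\Li^{\otimes d}\otimes\overline{\mathcal{F}})$ is surjective. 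Explicitly, $\ho^0(\X,\Li^{\otimes d}\otimes\overline{\mathcal{F}})$ is spanned by the sections $s\cdot m_j$ ($1\leq j\leq N$), where $m_j\in\ho^0(\X,\li^{\otimes a}\otimes\M)$ are the fixed components of $\varphi$ and $s$ runs over the basis of $\ho^0(\X,\li^{\otimes(d-a)})$ of sections of norm $\leq e^{-\varepsilon_0(d-a)}$ furnished by the base case. By submultiplicativity, $\|s\cdot m_j\|_{\sup}\leq\|s\|_{\sup}\cdot\max_j\|m_j\|_{\sup}\leq e^{-\varepsilon_0 d/2}$ for $d$ large; this is the generating family needed in the first step, and taking $\varepsilon_{\mathcal{F}}$ a suitable fraction of $\varepsilon_0$ finishes the proof.

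\textbf{Main obstacle.} Everything outside the base case is soft: finite generation and surjectivity of multiplication maps of section rings, Serre vanishing, and the elementary lattice lemma. The genuine point is the base case, namely passing from ``norm $<1$'' — which is all condition iii) directly provides — to a \emph{uniform exponential} decay rate $e^{-\varepsilon_0 d}$. The argument above does this by hand through compactness of $\X(\bbC)$ together with multiplicativity of the norm; any route here must invoke some real arithmetic positivity input (equivalently, the arithmetic Hilbert--Samuel theorem).
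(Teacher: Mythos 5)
The paper does not prove this proposition; it is imported verbatim as Proposition~2.3 of \cite{Ch17}, so there is no internal proof to compare your argument against. Your self-contained proof is correct and follows what is essentially the standard route. The lattice lemma you prove by induction (a generating family of norm $\leq B$ in a rank-$r$ Euclidean lattice yields a basis of norm $\leq \tfrac12\sqrt{r+3}\,B$) is right, and since $r=O(d^{\,n-1})$ the polynomial loss is indeed absorbed by halving $\varepsilon$. The base case correctly isolates the genuine positivity input: condition~iii) of arithmetic ampleness supplies effective generators of $\ho^0(\X,\li^{\otimes D})$, and submultiplicativity of the sup norm together with eventual surjectivity of the multiplication maps of the section ring (a standard consequence of Castelnuovo--Mumford regularity over a Noetherian base, valid over $\Spec\ \Z$) propagates the strict inequality $\|t_i\|<1$ into a uniform exponential bound $e^{-\varepsilon_0 d}$. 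The reduction of general $\overline{\mathcal F}$ to the base case via a surjection $\bigoplus\li^{\otimes -a}\twoheadrightarrow\mathcal F$, Serre vanishing for $\ho^1$ of the kernel, and multiplication by the fixed sections $m_j\in\ho^0(\X,\li^{\otimes a}\otimes\M)$ is also sound; note too that since $\mathcal F$ sits inside the torsion-free sheaf $\M$ on the $\Z$-flat scheme $\X$, the case $\mathcal F_{\Q}=0$ forces $\mathcal F=0$, so your parenthetical there is vacuously fine. Aside from a few constants stated a bit loosely (for instance $k\geq d/(2D)$ only for $d\geq 4D$, so $\varepsilon_0$ should be $\delta/(4D)$ rather than $\delta/(2D)$), the argument is complete.
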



The Hilbert-Samuel formula for $\theta$-invariants can be deduced from the Poisson-Riemann-Roch formula together with an estimate on $h^1_{\theta}$. The Poisson-Riemann-Roch formula over $\Spec\ \Z$ says that for any Hermitian lattice $\overline{E}=(E, \lVert\cdot\rVert)$, we have
\[
	h^0_{\theta}(\overline{E})-h^1_{\theta}(\overline{E})=\widehat{\deg}(\overline{E}),
\]
where the Arakelov degree $\widehat{\deg}(\overline{E})$ of $\overline{E}$ is equal to the logarithm of the covolume of $\overline{E}$ (see Formula (2.2.3) in \cite{Bo20}).

Under the situation as in Proposition \ref{restri}, the Poisson-Riemann-Roch formula reads
\[
	h^0_{\theta}(\X,\Li^{\otimes d}\otimes \overline{\mathcal{F}})-h^1_{\theta}(\X,\Li^{\otimes d}\otimes \overline{\mathcal{F}})=\widehat{\deg}(\Li^{\otimes d}\otimes \overline{\mathcal{F}}).
\]

In \cite{Bo20}, Bost gives an upper bound of $h^0_{\theta}(\overline{E})$ by the minimal norm of non-zero elements in the lattice $\overline{E}$, which in particular implies the following result : 
\begin{prop}\label{h1theta}
	Let $\X$ be a projective arithmetic variety of absolute dimension $n$, $\Li$ an ample Hermitian line bundle on $\X$. Let $\overline{\mathcal{M}}$ be a Hermitian vector bundle of rank $r$ on $\X$ and let $\overline{\mathcal{F}}$ be a coherent subsheaf of $\overline{\mathcal{M}}$. Let $\varepsilon_{\mathcal{F}}$ be the constant we find by Proposition \ref{epsilon}. We have
	\[
		0<h^1_{\theta}(\X, \Li^{\otimes d}\otimes \overline{\mathcal{F}})<\exp\left( h^1_{\theta}(\X, \Li^{\otimes d}\otimes \overline{\mathcal{F}}) \right)-1\leq C(\varepsilon_{\mathcal{F}}),
	\]
	where
	\[
		C(\varepsilon_{\mathcal{F}}):=3^{\mathrm{rk}\big(\ho^0(\X,\Li^{\otimes d}\otimes \overline{\mathcal{F}})\big)}\cdot\Big(\pi e^{2\varepsilon_{\mathcal{F}} d}\Big)^{-\frac{1}{2}\mathrm{rk}\big(\ho^0(\X,\Li^{\otimes d}\otimes \overline{\mathcal{F}})\big)}\int_{\pi e^{2\varepsilon_{\mathcal{F}} d}}^{+\infty}u^{\frac{1}{2}\mathrm{rk}\big(\ho^0(\X,\Li^{\otimes d}\otimes \overline{\mathcal{F}})\big)}e^{-u}\mathrm{d}u.
	\]
	In particular, when $d$ is sufficiently large, we have $e^{2\varepsilon_{\mathcal{F}} d}>\frac{1}{2\pi\cdot \mathrm{rk}\big(\ho^0(\X,\Li^{\otimes d}\otimes \overline{\mathcal{F}})\big)}$, which implies
	\begin{eqnarray*}
		C(\varepsilon_{\mathcal{F}})&\leq& 3^{\mathrm{rk}\big(\ho^0(\X,\Li^{\otimes d}\otimes \overline{\mathcal{F}})\big)}\left( 1-\frac{\mathrm{rk}\big(\ho^0(\X,\Li^{\otimes d}\otimes \overline{\mathcal{F}})\big)}{2\pi e^{2\varepsilon_{\mathcal{F}} d}} \right)^{-1}e^{-\pi e^{2\varepsilon_{\mathcal{F}} d}}\\
		&<& 2\cdot 3^{\mathrm{rk}\big(\ho^0(\X,\Li^{\otimes d}\otimes \overline{\mathcal{F}})\big)}e^{-\pi e^{2\varepsilon_{\mathcal{F}} d}}.
	\end{eqnarray*}
\end{prop}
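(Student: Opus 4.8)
The plan is to reduce the statement to a lower bound on the first minimum of the dual lattice and then feed this into Bost's upper estimate for $h^0_\theta$. Write $\overline{E}$ for the Hermitian lattice $\ho^0(\X,\Li^{\otimes d}\otimes\overline{\mathcal{F}})$ with its induced metric, and set $\rho=\mathrm{rk}\,\overline{E}$. By definition $h^1_\theta(\X,\Li^{\otimes d}\otimes\overline{\mathcal{F}})=h^0_\theta(\overline{E}^{\vee})=\log\!\big(1+\sum_{0\neq v\in\overline{E}^{\vee}}e^{-\pi\lVert v\rVert^2}\big)$. For $d$ large the lattice $\overline{E}$ is nonzero (as $\li$ is ample), so the sum over nonzero $v$ is a strictly positive real number $t$; then $h^1_\theta=\log(1+t)>0$ and $h^1_\theta<t=e^{h^1_\theta}-1$ since $\log(1+t)<t$ for $t>0$. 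This yields the two leftmost inequalities and reduces the last one to the bound $t\leq C(\varepsilon_{\mathcal{F}})$.

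Next I would invoke Proposition \ref{epsilon} to fix a $\Z$-basis $e_1,\dots,e_\rho$ of $\overline{E}$ with $\lVert e_i\rVert<e^{-\varepsilon_{\mathcal{F}}d}$ for $d$ large. For any nonzero $v\in\overline{E}^{\vee}$ the integers $\langle v,e_i\rangle$ are not all zero, so $|\langle v,e_{i_0}\rangle|\geq 1$ for some index $i_0$, whence by Cauchy--Schwarz $\lVert v\rVert\geq\lVert e_{i_0}\rVert^{-1}>e^{\varepsilon_{\mathcal{F}}d}$. Therefore $\lambda_1(\overline{E}^{\vee})>e^{\varepsilon_{\mathcal{F}}d}$, while $\mathrm{rk}\,\overline{E}^{\vee}=\rho$.

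It then remains to bound $\sum_{0\neq v\in\overline{E}^{\vee}}e^{-\pi\lVert v\rVert^2}$ knowing only $\rho$ and $\lambda:=\lambda_1(\overline{E}^{\vee})>e^{\varepsilon_{\mathcal{F}}d}$, which is exactly the form of Bost's estimate in \cite{Bo15}. Concretely one uses the volume-packing count $\#\{v\in\overline{E}^{\vee}: 0<\lVert v\rVert\leq s\}\leq(2s/\lambda+1)^{\rho}\leq(3s/\lambda)^{\rho}$ for $s\geq\lambda$ (the set being empty for $s<\lambda$), writes the theta sum as the Stieltjes integral $\int_0^{\infty}e^{-\pi s^2}\,dN(s)$, integrates by parts (the boundary terms vanish), and substitutes $u=\pi s^2$; the resulting expression $3^{\rho}(\pi\lambda^2)^{-\rho/2}\int_{\pi\lambda^2}^{\infty}u^{\rho/2}e^{-u}\,du$ is decreasing in $\lambda$, so replacing $\lambda$ by $e^{\varepsilon_{\mathcal{F}}d}$ gives precisely $C(\varepsilon_{\mathcal{F}})$. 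For the ``in particular'' clause, a single integration by parts gives $\int_a^{\infty}u^{\rho/2}e^{-u}\,du\leq a^{\rho/2}e^{-a}\big(1-\tfrac{\rho}{2a}\big)^{-1}$ whenever $a:=\pi e^{2\varepsilon_{\mathcal{F}}d}>\rho/2$, which holds for $d$ large since $\rho=\mathrm{rk}\,\ho^0(\X,\Li^{\otimes d}\otimes\overline{\mathcal{F}})$ grows only polynomially in $d$ whereas $a$ grows exponentially; substituting this reduces $C(\varepsilon_{\mathcal{F}})$ to $3^{\rho}e^{-\pi e^{2\varepsilon_{\mathcal{F}}d}}\big(1-\tfrac{\rho}{2\pi e^{2\varepsilon_{\mathcal{F}}d}}\big)^{-1}$, and for $d$ large the last factor is $<2$.

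The argument is routine once Bost's input is granted; the only place demanding care is step three — tracking the constant $3^{\rho}$ and the exponent $\rho/2$ so that the elementary estimate lands exactly on the stated closed form, verifying the monotonicity in $\lambda$ that permits replacing the true first minimum by $e^{\varepsilon_{\mathcal{F}}d}$, and making the growth-rate comparison between $\mathrm{rk}\,\ho^0(\X,\Li^{\otimes d}\otimes\overline{\mathcal{F}})$ and $e^{2\varepsilon_{\mathcal{F}}d}$ quantitative enough to justify ``for $d$ sufficiently large'' in the final clause.
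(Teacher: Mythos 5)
Your argument is correct and follows essentially the same route as the paper: you derive the lower bound $\lambda_1(\overline{E}^{\vee})>e^{\varepsilon_{\mathcal F}d}$ from the basis of small sections supplied by Proposition~\ref{epsilon} exactly as in the paper's Lemma~\ref{minimum}, and then feed this into Bost's bound for $h^0_\theta$ of a lattice with large first minimum. The only difference is that the paper simply cites \cite[Prop.~2.6.2 and Lemma~2.6.4]{Bo15} at the two places where you reprove them inline (the packing/Stieltjes/integration-by-parts derivation of $C(\rho,\lambda)$, the monotonicity in $\lambda$, and the one-step integration by parts giving the $(1-\tfrac{\rho}{2a})^{-1}$ factor); your unpacked version is mathematically equivalent. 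As an aside, the hypothesis $e^{2\varepsilon_{\mathcal F}d}>\tfrac{1}{2\pi\,\mathrm{rk}(\ho^0(\X,\Li^{\otimes d}\otimes\overline{\mathcal F}))}$ printed in the statement is evidently a misprint for $e^{2\varepsilon_{\mathcal F}d}>\tfrac{\mathrm{rk}(\ho^0(\X,\Li^{\otimes d}\otimes\overline{\mathcal F}))}{2\pi}$, which is the condition you correctly identify (your $a>\rho/2$, strengthened to $a>\rho$ for the final factor $<2$) and which indeed holds for $d$ large by the polynomial growth of the rank.
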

To prove this, we need a lemma :
\begin{lem}\label{minimum}
	When $d$ is sufficiently large, any non-zero element of $\ho^0(\X,\Li^{\otimes d}\otimes\overline{\mathcal{F}})^{\vee}$ has norm larger than $e^{\varepsilon_{\mathcal{F}} d}$.
\end{lem}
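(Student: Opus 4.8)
The plan is to transport the short-basis estimate of Proposition~\ref{epsilon} to the dual lattice: a lattice that admits a $\Z$-basis of very short vectors has a dual lattice all of whose nonzero vectors are very long.

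Concretely, write $r=\mathrm{rk}\big(\ho^0(\X,\Li^{\otimes d}\otimes\overline{\mathcal{F}})\big)$ and let $\|\cdot\|$ denote the norm on $\ho^0(\X,\Li^{\otimes d}\otimes\overline{\mathcal{M}})$ restricted to the sublattice $\ho^0(\X,\Li^{\otimes d}\otimes\overline{\mathcal{F}})$. By Proposition~\ref{epsilon}, for $d$ large enough there is a $\Z$-basis $(s_1,\dots,s_r)$ of $\ho^0(\X,\Li^{\otimes d}\otimes\overline{\mathcal{F}})$ with $\|s_i\|<e^{-\varepsilon_{\mathcal{F}}d}$ for every $i$. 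Let $(s_1^{\vee},\dots,s_r^{\vee})$ be the dual $\Z$-basis of $\ho^0(\X,\Li^{\otimes d}\otimes\overline{\mathcal{F}})^{\vee}$, characterized by $\langle s_i^{\vee},s_j\rangle=\delta_{ij}$.

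Now let $\xi$ be a nonzero element of $\ho^0(\X,\Li^{\otimes d}\otimes\overline{\mathcal{F}})^{\vee}$ and write $\xi=\sum_{i=1}^{r} n_i s_i^{\vee}$ with $n_i\in\Z$. Since $\xi\neq 0$, some coefficient $n_j$ is nonzero, hence $|n_j|\geq 1$. Applying the defining inequality of the dual norm, $|\langle\xi,s_j\rangle|\leq\|\xi\|\cdot\|s_j\|$, and using $\langle\xi,s_j\rangle=n_j$ together with $\|s_j\|<e^{-\varepsilon_{\mathcal{F}}d}$, we get
\[
	\|\xi\|\ \geq\ \frac{|n_j|}{\|s_j\|}\ \geq\ \frac{1}{\|s_j\|}\ >\ e^{\varepsilon_{\mathcal{F}}d},
\]
which is the asserted bound; the threshold on $d$ is exactly the one supplied by Proposition~\ref{epsilon}.

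I do not expect a serious obstacle here: the statement is essentially a one-line consequence of Proposition~\ref{epsilon} once one dualizes. The only points deserving care are the correct invocation of the Cauchy--Schwarz-type inequality $|\langle\xi,v\rangle|\leq\|\xi\|\,\|v\|$ for the dual metric (so that shortness of the basis vectors converts into a lower bound on the dual norm of \emph{arbitrary} lattice vectors, not merely of the dual basis), and checking that whatever norm is actually used to form the $\theta$-invariants is the one for which Proposition~\ref{epsilon} is stated, any discrepancy being absorbed into the constant $\varepsilon_{\mathcal{F}}$.
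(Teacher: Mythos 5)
Your proof is correct and is essentially the same argument as in the paper: both exploit the short basis of $\ho^0(\X,\Li^{\otimes d}\otimes\overline{\mathcal{F}})$ furnished by Proposition~\ref{epsilon} and the integrality of the perfect pairing via Cauchy--Schwarz. The paper phrases it as a contradiction (a short dual vector would pair to zero with every basis vector, forcing it to vanish), while you expand the dual vector in the dual basis and bound directly, but these are the same computation.
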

\begin{proof}
	We know that $\ho^0(\X,\Li^{\otimes d}\otimes\overline{\mathcal{F}})$ is a free $\Z$-module. By Lemma \ref{epsilon}, there exists a positive constant $\varepsilon_{\mathcal{F}}$ such that for any large enough integer $d$, $\ho^0(\X, \Li^{\otimes d}\otimes\overline{\mathcal{F}})$ has a basis consisting of sections with norm smaller than $e^{-\varepsilon_{\mathcal{F}} d}$. 
	As the pairing 
	\[
		\ho^0(\X,\Li^{\otimes d}\otimes\overline{\mathcal{F}})\times \ho^0(\X,\Li^{\otimes d}\otimes\overline{\mathcal{F}})^{\vee} \longrightarrow \Z
	\]
	is perfect, there does not exist a non-zero element $\sigma^{\vee}\in \ho^0(\X,\Li^{\otimes d}\otimes\overline{\mathcal{F}})^{\vee}$  such that $\lVert \sigma^{\vee}\rVert\leq e^{\varepsilon_{\mathcal{F}} d}$. Because otherwise for any element $\sigma_i$ of the chosen basis of $\ho^0(\X,\Li^{\otimes d}\otimes\overline{\mathcal{F}})$ with norm smaller than $e^{-\varepsilon_{\mathcal{F}} d}$, we have $|(\sigma_i, \sigma^{\vee})|\leq \lVert \sigma_i\rVert\cdot \lVert \sigma^{\vee}\rVert <1$, which forces $(\sigma_i, \sigma^{\vee})$ to be $0$. But if it is true for any element in the basis, we would have $\sigma^{\vee}=0$, which leads to a contradiction. Hence any non-zero element of $\ho^0(\X,\Li^{\otimes d}\otimes\overline{\mathcal{F}})^{\vee}$ has norm larger than $e^{\varepsilon_{\mathcal{F}} d}$. 
\end{proof}
\begin{proof}[Proof of Proposition \ref{h1theta}]
	Set
	\[
		\lambda_{\mathcal{F},d}:=\min\{ \lVert \sigma\rVert ; 0\not=\sigma\in  \ho^0(\X,\Li^{\otimes d}\otimes \overline{\mathcal{F}})^{\vee} \}.
	\]
	We know by Lemma \ref{minimum} that when $d$ is large enough, $\lambda_{\mathcal{F},d}>e^{\varepsilon_{\mathcal{F}} d}$. Applying \cite[Prop 2.6.2]{Bo20} to the Hermitian lattice $\ho^0(\X,\Li^{\otimes d}\otimes \overline{\mathcal{F}})^{\vee}$, we get
	\[
		0<h^1_{\theta}(\X, \Li^{\otimes d}\otimes \overline{\mathcal{F}}) < \exp\left( h^1_{\theta}(\X, \Li^{\otimes d}\otimes \overline{\mathcal{F}}) \right)-1\leq C(\mathrm{rk}\big(\ho^0(\X,\Li^{\otimes d}\otimes \overline{\mathcal{F}})\big), \lambda_{\mathcal{F}, d}),
	\]
	where
	\begin{eqnarray*}
		& &C(\mathrm{rk}\big(\ho^0(\X,\Li^{\otimes d}\otimes \overline{\mathcal{F}})\big), \lambda_{\mathcal{F}, d})\\
		&=&3^{\mathrm{rk}\big(\ho^0(\X,\Li^{\otimes d}\otimes \overline{\mathcal{F}})\big)}\big(\pi \lambda_{\mathcal{F}, d}^{2}\big)^{-\frac{1}{2}\mathrm{rk}\big(\ho^0(\X,\Li^{\otimes d}\otimes \overline{\mathcal{F}})\big)}\int_{\pi \lambda_{\mathcal{F}, d}^{2}}^{+\infty}u^{\frac{1}{2}\mathrm{rk}\big(\ho^0(\X,\Li^{\otimes d}\otimes \overline{\mathcal{F}})\big)}e^{-u}\mathrm{d}u\\
		&<&C(\varepsilon_{\mathcal{F}}).
	\end{eqnarray*}
	Then the inequality
	\begin{eqnarray*}
		C(\varepsilon_{\mathcal{F}})&\leq& 3^{\mathrm{rk}\big(\ho^0(\X,\Li^{\otimes d}\otimes \overline{\mathcal{F}})\big)}\left( 1-\frac{\mathrm{rk}\big(\ho^0(\X,\Li^{\otimes d}\otimes \overline{\mathcal{F}})\big)}{2\pi e^{2\varepsilon_{\mathcal{F}} d}} \right)^{-1}e^{-\pi e^{2\varepsilon_{\mathcal{F}} d}}\\
		&<& 2\cdot 3^{\mathrm{rk}\big(\ho^0(\X,\Li^{\otimes d}\otimes \overline{\mathcal{F}})\big)}e^{-\pi e^{2\varepsilon_{\mathcal{F}} d}}.
	\end{eqnarray*}
	is a consequence of Lemma 2.6.4 in \cite{Bo20}. Hence we conclude.
\end{proof}
When $d$ is large enough, we have
\[
	\mathrm{rk}\big(\ho^0(\X,\Li^{\otimes d}\otimes \overline{\mathcal{M}})\big)=h^0(\X_{\Q},\Li^{\otimes d}\otimes \overline{\mathcal{M}}).
\]
As $\dim\X_{\Q}=n-1$, the assymptotic Riemann-Roch theorem implies that we can find a constant $K>0$ such that
\[
	\mathrm{rk}\big(\ho^0(\X,\Li^{\otimes d}\otimes \overline{\mathcal{F}})\big)\leq \mathrm{rk}\big(\ho^0(\X,\Li^{\otimes d}\otimes \overline{\mathcal{M}})\big)<K d^{n-1}.
\]
Then we have the following corollary : 
\begin{cor}\label{h1limit}
	Under the same situation as Proposition \ref{h1theta}, there exists a constant $K>0$ such that when $d$ is large enough,
	\[
		h^1_{\theta}(\X, \Li^{\otimes d}\otimes \overline{\mathcal{F}})=O\left( \exp\left( Kd^{n-1}-\pi e^{2\varepsilon_{\mathcal{F}} d} \right) \right).
	\]
	In particular, we have
	\[
		\lim_{d\rightarrow \infty} h^1_{\theta}(\X, \Li^{\otimes d}\otimes \overline{\mathcal{F}})=0.
	\]
\end{cor}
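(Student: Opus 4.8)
The plan is to combine the explicit upper bound of Proposition \ref{h1theta} with the asymptotic Riemann--Roch estimate for $\mathrm{rk}\big(\ho^0(\X,\Li^{\otimes d}\otimes\overline{\mathcal{F}})\big)$ recorded just above. First I would apply the last inequality of Proposition \ref{h1theta}: for $d$ large enough,
\[
	0<h^1_{\theta}(\X,\Li^{\otimes d}\otimes\overline{\mathcal{F}})<2\cdot 3^{\mathrm{rk}\big(\ho^0(\X,\Li^{\otimes d}\otimes\overline{\mathcal{F}})\big)}\,e^{-\pi e^{2\varepsilon_{\mathcal{F}}d}}.
\]
Next I would insert the polynomial bound on the rank. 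Since $\ho^0(\X,\Li^{\otimes d}\otimes\overline{\mathcal{F}})$ injects into $\ho^0(\X,\Li^{\otimes d}\otimes\overline{\mathcal{M}})$ and, for $d\gg 0$, the latter has rank $h^0(\X_{\Q},\Li^{\otimes d}_{\Q}\otimes\overline{\mathcal{M}}_{\Q})$, which is given by a fixed polynomial in $d$ of degree $\dim\X_{\Q}=n-1$, there is a constant $K_0>0$, independent of $d$, with $\mathrm{rk}\big(\ho^0(\X,\Li^{\otimes d}\otimes\overline{\mathcal{F}})\big)<K_0 d^n$ for all large $d$.

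Writing $3^{K_0 d^n}=\exp\big((\log 3)K_0 d^n\big)$ and setting $K:=(\log 3)K_0$, the two estimates combine to give, for $d$ large enough,
\[
	h^1_{\theta}(\X,\Li^{\otimes d}\otimes\overline{\mathcal{F}})<2\exp\big(Kd^n-\pi e^{2\varepsilon_{\mathcal{F}}d}\big),
\]
which is precisely the asserted bound $h^1_{\theta}(\X,\Li^{\otimes d}\otimes\overline{\mathcal{F}})=O\big(\exp(Kd^n-\pi e^{2\varepsilon_{\mathcal{F}}d})\big)$. For the limit, observe that $\varepsilon_{\mathcal{F}}>0$, so $\pi e^{2\varepsilon_{\mathcal{F}}d}$ grows exponentially in $d$ while $Kd^n$ grows only polynomially; hence $Kd^n-\pi e^{2\varepsilon_{\mathcal{F}}d}\to-\infty$ and the right-hand side tends to $0$. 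Together with the positivity $h^1_{\theta}(\X,\Li^{\otimes d}\otimes\overline{\mathcal{F}})>0$ from Proposition \ref{h1theta}, a squeeze argument yields $\lim_{d\to\infty}h^1_{\theta}(\X,\Li^{\otimes d}\otimes\overline{\mathcal{F}})=0$.

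I do not expect a genuine obstacle: the argument is a direct substitution of one estimate into another. The only point deserving a word of care is the uniformity of the constant $K_0$ (hence of $K$) in $d$, which holds because the relevant Hilbert function on $\X_{\Q}$ is eventually equal to a fixed polynomial, so a single bound $K_0 d^n$ suffices for all $d$ beyond a threshold; the $O(\cdot)$ statement then absorbs the finitely many small values of $d$ together with the constant factor.
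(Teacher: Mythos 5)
Your proof is correct and follows essentially the same route as the paper: plug the asymptotic Riemann--Roch bound $\mathrm{rk}\big(\ho^0(\X,\Li^{\otimes d}\otimes\overline{\mathcal{F}})\big)\leq\mathrm{rk}\big(\ho^0(\X,\Li^{\otimes d}\otimes\overline{\mathcal{M}})\big)<K d^n$ into the final inequality of Proposition \ref{h1theta} and observe that the exponential term dominates. Your explicit bookkeeping of the $\log 3$ factor in defining $K$ is a slightly cleaner rendering of what the paper leaves implicit in its choice of constant, but the substance is identical.
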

Note that for a Hermitian vector bundle $\overline{\mathcal{M}}$, Zhang proved in \cite{Zh95} an estimate of the Arakelov degree
\[
	\widehat{\deg}(\Li^{\otimes d}\otimes \overline{\mathcal{M}})=\frac{r}{n!}\Li^{n} d^n+o(d^n).
\]
The estimate is given in Theorem (1.4) of the paper, where the Arakelov degree $\widehat{\deg}(\Li^{\otimes d}\otimes \overline{\mathcal{M}})$ is denoted by $\chi_{\mathrm{sup}}(\Li^{\otimes d}\otimes \overline{\mathcal{M}})$. The above estimate together with the Poisson-Riemann-Roch formula gives us the $\theta$-version of the Hilbert-Samuel formula as follows :
\begin{thm}\label{hilbsamutheta}
	Let $\X$ be a projective arithmetic variety of absolute dimension $n$, $\Li$ an ample Hermitian line bundle on $\X$ and $\overline{\mathcal{M}}$ a Hermitian vector bundle of rank $r$ on $\X$. Then as $d$ tends to $\infty$, we have
	\[
		h^0_{\theta}(\X,\Li^{\otimes d}\otimes \overline{\mathcal{M}})= \frac{r}{n!}\Li^{n} d^n+o(d^n).
	\]
\end{thm}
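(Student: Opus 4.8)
The plan is to combine the Poisson–Riemann–Roch formula with the two ingredients already assembled in the excerpt: the vanishing of $h^1_\theta$ from the Corollary following Proposition \ref{h1theta}, and Zhang's asymptotic estimate for the Arakelov degree. Concretely, I would start from the identity
\[
	h^0_{\theta}(\X,\Li^{\otimes d}\otimes \overline{\mathcal{M}})-h^1_{\theta}(\X,\Li^{\otimes d}\otimes \overline{\mathcal{M}})=\widehat{\deg}(\Li^{\otimes d}\otimes \overline{\mathcal{M}}),
\]
which is the Poisson–Riemann–Roch formula over $\Spec\ \Z$ applied to the Hermitian lattice $\ho^0(\X,\Li^{\otimes d}\otimes\overline{\mathcal{M}})$ (this is the displayed formula stated right before Proposition \ref{h1theta}, taking $\overline{\mathcal{F}}=\overline{\mathcal{M}}$). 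Rearranging gives
\[
	h^0_{\theta}(\X,\Li^{\otimes d}\otimes \overline{\mathcal{M}})=\widehat{\deg}(\Li^{\otimes d}\otimes \overline{\mathcal{M}})+h^1_{\theta}(\X,\Li^{\otimes d}\otimes \overline{\mathcal{M}}),
\]
so the theorem will follow once both terms on the right are controlled to order $o(d^n)$.

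For the first term, I would invoke Zhang's Theorem (1.4) in \cite{Zh95} as recorded in the excerpt, namely
\[
	\widehat{\deg}(\Li^{\otimes d}\otimes \overline{\mathcal{M}})=\frac{r}{n!}\Li^{n} d^n+o(d^n),
\]
which is exactly the main-term contribution we want. For the second term, I would apply the Corollary immediately following Proposition \ref{h1theta} with $\overline{\mathcal{F}}=\overline{\mathcal{M}}$: it states that $h^1_{\theta}(\X, \Li^{\otimes d}\otimes \overline{\mathcal{M}})=O\left( \exp\left( Kd^n-\pi e^{2\varepsilon_{\mathcal{M}} d} \right) \right)$ for a suitable constant $K>0$, and in particular $\lim_{d\to\infty}h^1_{\theta}(\X, \Li^{\otimes d}\otimes \overline{\mathcal{M}})=0$. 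Since this quantity tends to $0$, it is a fortiori $o(d^n)$. Adding the two estimates yields
\[
	h^0_{\theta}(\X,\Li^{\otimes d}\otimes \overline{\mathcal{M}})=\frac{r}{n!}\Li^{n} d^n+o(d^n),
\]
as claimed.

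There is essentially no obstacle here: the theorem is a bookkeeping assembly of results proved or cited earlier in the section, and the only point requiring a word of care is that the error term $h^1_\theta$ is not merely $o(d^n)$ but in fact superexponentially small, so there is ample room in the estimate. One should also note that the rank condition $\mathrm{rk}\big(\ho^0(\X,\Li^{\otimes d}\otimes \overline{\mathcal{M}})\big)=h^0(\X_{\Q},\Li^{\otimes d}\otimes \overline{\mathcal{M}})$ for $d$ large, together with asymptotic Riemann–Roch, is what legitimizes applying Zhang's estimate and the rank bound $\mathrm{rk}<Kd^n$ used in the Corollary; these facts are already established in the excerpt, so no new argument is needed.
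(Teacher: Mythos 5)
Your proposal is correct and follows essentially the same approach as the paper: combine the Poisson--Riemann--Roch identity $h^0_\theta = \widehat{\deg} + h^1_\theta$ with Zhang's estimate $\widehat{\deg}(\Li^{\otimes d}\otimes\overline{\mathcal{M}})=\frac{r}{n!}\Li^n d^n+o(d^n)$ and the Corollary showing $h^1_\theta\to 0$. The paper presents this as a direct consequence of the preceding results in exactly this way, so there is no substantive difference.
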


\subsection{Results on restriction morphisms}\label{restriction}
We give two estimates concerning restriction morphisms. The first one is about the $\theta$-proportion of global sections of $\Li^{\otimes d}$ which vanishes on a fixed closed subscheme. We show that this $\theta$-proportion tends to $0$ when $d$ tends to infinity. The second one estimates the $\theta$-proportion of global sections vanishing on $\X_N$ for a positive integer $N$, especially how this proportion behaves when we change the integer $N$.
\begin{prop} \label{fibre}
	Let $\Y$ be a closed subscheme of $\X$ such that the generic fiber $\Y_{\Q}$ of $\Y$ is reduced. Let
	\[
		\phi_{d, \Y}: \mathrm{H}^0(\X, \Li^{\otimes d} ) \longrightarrow \mathrm{H}^0(\Y, \Li^{\otimes d} ) 
	\]
	be the natural restriction morphism. Then there exists a positive integer $d_0$ and a constant $\eta \in \mathbb{R}_+$ which are independ of $\Y$ such that for any $d\geq  d_0$, 
	\[
		\frac{\sum_{\sigma\in \mathrm{Ker}(\phi_{d, \Y}) } \exp{(-\pi||\sigma||^{2})}}{\sum_{\sigma\in \mathrm{H}^0(\X, \Li^{\otimes d}) } \exp{(-\pi||\sigma||^{2})}}
		= O\left(\exp{(-\eta d^{\mathrm{dim} \Y})}\right),
	\]
	where the constants involved in the estimate are also independent of $\Y$.
\end{prop}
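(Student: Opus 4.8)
The plan is to reduce the ratio on the left-hand side to a comparison of $\theta$-invariants and then to apply the Hilbert--Samuel formula for $\theta$-invariants (Theorem \ref{hilbsamutheta}) to the relevant pieces. First, I would observe that $\mathrm{Ker}(\phi_{d,\Y})$ is precisely $\ho^0(\X, \Li^{\otimes d}\otimes \mathcal{I}_{\Y})$, where $\mathcal{I}_{\Y}$ is the ideal sheaf of $\Y$ in $\X$, equipped with the sup norm induced from $\Li^{\otimes d}$ (i.e. as a coherent subsheaf $\overline{\mathcal F}$ of $\overline{\M}=\Oc_{\X}$). Thus the numerator is $\exp\big(h^0_\theta(\X, \Li^{\otimes d}\otimes \mathcal{I}_{\Y})\big)$ and the denominator is $\exp\big(h^0_\theta(\X, \Li^{\otimes d})\big)$, so the ratio equals $\exp\big(h^0_\theta(\X, \Li^{\otimes d}\otimes \mathcal{I}_{\Y}) - h^0_\theta(\X, \Li^{\otimes d})\big)$, and it suffices to bound this exponent above by $-\eta d^{\dim \Y}$ for $d\geq d_0$.

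Next, I would control that difference via the Poisson--Riemann--Roch formula. Applying $h^0_\theta - h^1_\theta = \widehat{\deg}$ to both $\ho^0(\X,\Li^{\otimes d}\otimes \mathcal{I}_{\Y})$ and $\ho^0(\X,\Li^{\otimes d})$, and using the Corollary after Proposition \ref{h1theta} (which gives $h^1_\theta \to 0$ in both cases, in fact super-exponentially small), the exponent becomes
\[
	\widehat{\deg}(\Li^{\otimes d}\otimes \mathcal{I}_{\Y}) - \widehat{\deg}(\Li^{\otimes d}) + o(1).
\]
Now there is a short exact sequence of coherent sheaves $0\to \Li^{\otimes d}\otimes\mathcal{I}_{\Y}\to \Li^{\otimes d}\to \Li^{\otimes d}|_{\Y}\to 0$; for $d$ large the associated sequence on $\ho^0$ is exact on the left and its cokernel injects into $\ho^0(\Y,\Li^{\otimes d})$, which is a lattice of rank $h^0(\Y_{\Q},\Li^{\otimes d})$. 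Since $\Y_{\Q}$ is reduced of dimension $\dim\Y - 1$ (as a $\Q$-scheme) and $\Li$ is ample, asymptotic Riemann--Roch gives $h^0(\Y_{\Q},\Li^{\otimes d}) = c\, d^{\dim\Y - 1} + o(d^{\dim\Y-1})$ with $c>0$ (here $\dim\Y$ is the absolute, i.e. arithmetic, dimension of $\Y$, so $\Y_\Q$ has dimension $\dim\Y - 1$). The difference of Arakelov degrees is then the negative of $\widehat{\deg}$ of this quotient lattice; bounding its covolume from below — equivalently, bounding the norms of a basis of $\ho^0(\Y, \Li^{\otimes d})$ from below by a fixed power of $d$, or just crudely by a constant — one finds $\widehat{\deg}(\Li^{\otimes d}\otimes \mathcal{I}_{\Y}) - \widehat{\deg}(\Li^{\otimes d}) \leq -c' d^{\dim\Y - 1}\log d + O(d^{\dim\Y-1})$, which is certainly $\leq -\eta d^{\dim\Y - 1}$ for some $\eta>0$ and $d\geq d_0$. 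I should double-check the exponent claimed in the statement: the displayed bound has $d^{\dim\Y}$, but the natural output of this argument is $d^{\dim\Y - 1}$ (the dimension of the generic fiber $\Y_\Q$); either the statement uses a convention in which $\dim\Y$ denotes the relative/generic-fiber dimension, or the $d^{\dim\Y}$ is a harmless weakening since the true rate is governed by the exact-sequence cokernel. In any case the argument delivers a bound of the form $O(\exp(-\eta d^{\dim \Y_\Q}))$.

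The main obstacle is making the Arakelov-degree comparison effective, i.e. getting a genuine lower bound on the covolume of the image lattice $\Ima(\phi_{d,\Y})\subset \ho^0(\Y,\Li^{\otimes d})$ (with induced sup norm) rather than just on its rank. For this I would use that, by Proposition \ref{epsilon} applied on $\Y$ (or rather the easy half of such an estimate), the quotient lattice $\ho^0(\Y,\Li^{\otimes d})$ — hence any sublattice — has no nonzero vector of norm below some fixed threshold, so Minkowski's second theorem (or just the product of successive minima) forces $\widehat{\deg}(\Ima\phi_{d,\Y}) \leq -\frac12 \mathrm{rk}\,\log(\text{something})$; combined with $\widehat{\deg}(\Li^{\otimes d}\otimes\mathcal{I}_{\Y}) = \widehat{\deg}(\Li^{\otimes d}) - \widehat{\deg}(\Ima\phi_{d,\Y})$ (up to the negligible $h^1_\theta$ and up to finite-index/torsion corrections that are themselves polynomial in $d$, hence absorbed), this yields the claimed super-polynomial decay. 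The bookkeeping of these finite-index corrections and of the precise power of $d$ in the successive minima is the fiddly part, but none of it is deep.
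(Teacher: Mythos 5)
Your route — rewrite the ratio as $\exp\bigl(h^0_\theta(\Ker\phi_{d,\Y})-h^0_\theta(\X,\Li^{\otimes d})\bigr)$, pass to $\widehat{\deg}$ via Poisson--Riemann--Roch with the $h^1_\theta$ terms controlled by the Corollary to Proposition~\ref{h1theta} (you correctly note $\Ker\phi_{d,\Y}=\ho^0(\X,\Li^{\otimes d}\otimes\mathcal I_{\Y})$ and $\mathcal I_{\Y}$ is a coherent subsheaf of $\overline{\Oc_{\X}}$, so Proposition~\ref{h1theta} applies to both pieces), then bound $-\widehat{\deg}(\Ima\phi_{d,\Y},\text{quot.\ metric})$ — is a genuine and reasonable alternative to the paper's argument, which avoids $\widehat{\deg}$ entirely and instead lower-bounds $\exp(h^0_\theta(\X,\Li^{\otimes d}))$ by a parallelogram-law averaging over translates of $\Ker\phi_{d,\Y}$. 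However, the proposal as written has two linked errors that block it.

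First, the covolume estimate is pointed in the wrong direction. You need $\widehat{\deg}(\Ima\phi_{d,\Y})$ to be \emph{large} (so that $\widehat{\deg}(\Ker)-\widehat{\deg}(\ho^0)=-\widehat{\deg}(\Ima)$ is very negative), which means the covolume of the image lattice must be \emph{small}, i.e.\ $\Ima\phi_{d,\Y}$ must contain many independent vectors of very small norm. Your proposed input — a lower bound on the smallest vector of $\ho^0(\Y,\Li^{\otimes d})$, hence a lower bound on the covolume via Minkowski — gives an \emph{upper} bound on $\widehat{\deg}(\Ima)$, i.e.\ a lower bound on the ratio, the opposite of what is claimed. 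The correct ingredient is precisely Proposition~\ref{epsilon}: $\ho^0(\X,\Li^{\otimes d})$ has a basis of sections of norm $<e^{-\varepsilon_0 d}$, and by Proposition~2.19 of~\cite{Ch17} at least $Cd^{\dim\Y-1}$ of their images (in the horizontal case) are $\Z$-independent in $\Ima\phi_{d,\Y}$; since the quotient norm of each such image is $<e^{-\varepsilon_0 d}$, Hadamard gives $\widehat{\deg}(\Ima)\geq\varepsilon_0\,C\,d^{\dim\Y-1}\cdot d$.

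Second, and as a consequence of the first: the exponent $d^{\dim\Y}$ in the statement is correct, not a typo or a harmless weakening. The rank of the image is $\sim d^{\dim\Y-1}$, but each of the small vectors has norm $\sim e^{-\varepsilon d}$, contributing $\sim d$ (not $\sim\log d$) to $\widehat{\deg}$ per basis vector, so the total is $\sim d^{\dim\Y}$. Your proposed $d^{\dim\Y-1}\log d$ comes from implicitly assuming the relevant vectors have polynomial rather than exponentially small norm. You should also split off the case where $\Y$ is vertical (supported over a single prime $p$): there the rank difference is zero and $\widehat{\deg}(\Ker)-\widehat{\deg}(\ho^0)=-\log\#\Ima\phi_{d,\Y}$, which by the same reference is $\leq -Cd^{\dim\Y}\log p$, again giving the right exponent. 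With these corrections, the $\widehat{\deg}$-based argument does go through, and it relies on exactly the same inputs (Proposition~\ref{epsilon} and Charles's rank estimate) as the paper's parallelogram-law proof; the paper's version has the advantage of never having to discuss quotient metrics, saturation, or finite-index corrections.
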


\begin{proof}
	We may assume that $\Y$ is reduced and irreducible. 
	
	We first deal with the case when $\Y$ is horizontal, i.e. $\Y$ is flat over $\Spec\ \Z$. Proposition 2.20 in \cite{Ch21} applied to $\X_{\Q}$ and $\Y_{\Q}$ tells us that we can find constants $d_0\in \Z_{>0}$ and $C$ independent of $\Y$ such that for any $d\geq d_0$, the image of the restriction 
	\[
		\phi_{d, \Y}: \mathrm{H}^0(\X, \Li^{\otimes d} ) \longrightarrow \mathrm{H}^0(\Y, \Li^{\otimes d} )
	\]
	has $\Z$-rank at least $Cd^{\dim \Y-1}$. We denote by $k_d$ the $\Z$-rank of the image. So $k_d\geq Cd^{\dim \Y-1}$ when $d\geq d_0$. Enlarging $d_0$ if needed,  for any $d\geq d_0$ we can find a basis of $\mathrm{H}^0(\X, \Li^{\otimes d} )$ consisting of sections of norm smaller than $e^{-\varepsilon_0 d}$ by Proposition \ref{epsilon}. We may assume that the first $k_d$ elements of this basis, say $\sigma_1,\dots, \sigma_{k_d}$, have linearly independent images in $\mathrm{H}^0(\Y, \Li^{\otimes d} )$. 
	
	Now choose $\eta<\varepsilon_0$. Note that $\mathrm{H}^0(\X, \Li^{\otimes d} )=\Ker \phi_{d, \Y} \oplus \left( \bigoplus_{1\leq i\leq k_d} \Z\sigma_i \right)$. For any $\sigma\in \Ker \phi_{d, \Y} $, any integers $\lambda_i$ satisfying $|\lambda_i|<e^{\eta d}$ for $1\leq i\leq k_d$, by the parallelogram law we have
	\begin{eqnarray*}
		2\left\lVert \sigma+\sum_{1\leq i\leq k_d}\lambda_i\sigma_i \right\rVert^2+2\left\lVert (-\sigma)+\sum_{1\leq i\leq k_d}\lambda_i\sigma_i \right\rVert^2
		&=& \lVert 2\sigma \rVert^2+\left\lVert 2\sum_{1\leq i\leq k_d}\lambda_i\sigma_i \right\rVert^2 \\
		&\leq & 4\lVert \sigma \rVert^2  +4 \sum_{1\leq i\leq k_d}\lambda_i^2\lVert \sigma_i \rVert^2\\
		&\leq & 4\lVert \sigma \rVert^2+ 4k_d e^{2(\eta-\varepsilon_0)d}.
	\end{eqnarray*}
	Using this inequality, we obtain
	\begin{eqnarray*}
		\exp\Big(h^0_{\theta}(\X, \Li^{\otimes d} )\Big) &\geq&  \sum_{|\lambda_i|<e^{\eta d},1\leq i\leq k_d} \left(\sum_{\sigma \in \Ker \phi_{d, \Y} } e^{-\pi\lVert \sigma+\sum_{i}\lambda_i\sigma_i \rVert^2}\right) \\
		&=&  \sum_{|\lambda_i|<e^{\eta d},1\leq i\leq k_d} \left(\frac{1}{2}\sum_{\sigma \in \Ker \phi_{d, \Y} } \Big(e^{-\pi\lVert \sigma+\sum_{i}\lambda_i\sigma_i \rVert^2}+e^{-\pi\lVert (-\sigma)+\sum_{i}\lambda_i\sigma_i \rVert^2}\Big)\right) \\
		&\geq&  \sum_{|\lambda_i|<e^{\eta d},1\leq i\leq k_d} \left(\sum_{\sigma \in \Ker \phi_{d, \Y} } e^{-\frac{1}{2}\pi(\lVert \sigma+\sum_{i}\lambda_i\sigma_i \rVert^2+\lVert (-\sigma)+\sum_{i}\lambda_i\sigma_i \rVert^2)}\right) \\
		&\geq&  \sum_{|\lambda_i|<e^{\eta d},1\leq i\leq k_d} \left(\sum_{\sigma \in \Ker \phi_{d, \Y} } e^{-\frac{1}{2}\pi(2\lVert \sigma \rVert^2+ 2k_d e^{2(\eta-\varepsilon_0)d})}\right) \\
		&=& \exp\Big(h^0_{\theta}(\Ker \phi_{d, \Y}) \Big) \sum_{|\lambda_i|<e^{\eta d},1\leq i\leq k_d} e^{-\pi k_d e^{2(\eta-\varepsilon_0) d}}\\
		&\geq & \exp\Big(h^0_{\theta}(\Ker \phi_{d, \Y}) \Big)\cdot e^{\eta k_d  d} e^{-\pi k_d e^{2(\eta-\varepsilon_0) d}} ,
	\end{eqnarray*}
	where all the $\lambda_i$'s in the sum are integers. 
	
	As $k_d\leq h^0(\X_{\Q}, \Li^{\otimes d} )$ and that the chosen $\eta$ is strictly smaller than $\varepsilon_0$, up to enlarging $d_0$, we may assume that when $d\geq d_0$, we have 
	\[
		\frac{1}{2}<e^{-\pi h^0(\X_{\Q}, \Li^{\otimes d})\cdot e^{2(\eta-\varepsilon_0) d}} <1.
	\]
	Therefore $e^{-\pi k_d e^{2(\eta-\varepsilon_0) d}}>\frac{1}{2}$, and we deduce that when $d\geq d_0$,
	\[
		\exp\Big(h^0_{\theta}(\X, \Li^{\otimes d} )\Big) >\frac{1}{2}\exp\Big(h^0_{\theta}(\Ker \phi_{d, \Y}) \Big)\cdot e^{\eta k_d  d}.
	\]
	Since $k_d\geq Cd^{\dim \Y-1}$ when $d\geq d_0$, we then obtain the conclusion
	\[
		\frac{\sum_{\sigma\in \mathrm{Ker}(\phi_{d, \Y}) } \exp{(-\pi||\sigma||^{2})}}{\sum_{\sigma\in \mathrm{H}^0(\X, \Li^{\otimes d}) } \exp{(-\pi||\sigma||^{2})}}
		=\exp\Big( h^0_{\theta}(\Ker \phi_{d, \Y})- h^0_{\theta}(\X, \Li^{\otimes d} )\Big)
		= O(\exp{(-\eta d^{\mathrm{dim} \Y})}),
	\]
	where the constant involved in is independent of $\Y$.
	\medskip
	
	Now we deal with the case when $\Y$ is vertical. By our assumption, $\Y$ is reduced and irreducible. In this case $\Y$ is actually supported on the subscheme $\X_p$ for some prime number $p$. Again by Proposition 2.20 in \cite{Ch21}, we can find constants $d_0\in \Z_{>0}$ and $C$ independent of $\Y$ so that for any $d\geq d_0$, the image of the restriction 
	\[
		\phi_{d, \Y}: \mathrm{H}^0(\X, \Li^{\otimes d} ) \longrightarrow \mathrm{H}^0(\Y, \li^{\otimes d} )
	\]
	has $\F_p$-dimension at least $Cd^{\dim \Y}$. In other words, the cardinality of the image is at least $p^{Cd^{\dim\Y}}$. We still denote by $k_d$ the $\F_p$-dimension of the image. Then still by Proposition \ref{epsilon}, up to enlarging $d_0$, we may find $\sigma_1,\dots,\sigma_{k_d}\in  \mathrm{H}^0(\X, \Li^{\otimes d} ) $ with $\lVert \sigma_i\rVert <e^{-\varepsilon_0 d}$ for each $1\leq i\leq k_d$ and that their images in $\mathrm{H}^0(\Y, \li^{\otimes d} )$ are $\F_p$-linearly independent. In this case, any section $\sigma\in \mathrm{H}^0(\X, \Li^{\otimes d} )$ can be uniquely decomposed to the form
	\[
		\sigma=\sigma'+\sum_{1\leq i\leq k_d}\lambda_i\sigma_i,
	\]
	where $\sigma'\in \Ker \phi_{d, \Y} $, and for each $1\leq i\leq k_d$, $0\leq \lambda_i<p$ is an integer. Therefore when $d\geq d_0$, by the same method as above,  we have
	\begin{eqnarray*}
		\exp\Big(h^0_{\theta}(\X, \Li^{\otimes d} )\Big) &=&  \sum_{0\leq\lambda_i<p,1\leq i\leq k_d} \left(\sum_{\sigma \in \Ker \phi_{d, \Y} } e^{-\pi\lVert \sigma+\sum_{i}\lambda_i\sigma_i \rVert^2}\right) \\
		&\geq&  \sum_{0\leq\lambda_i<p,1\leq i\leq k_d} \left(\sum_{\sigma \in \Ker \phi_{d, \Y} } e^{-\frac{1}{2}\pi(2\lVert \sigma \rVert^2+ 2\sum_{i}\lambda_i^2\lVert\sigma_i \rVert^2)}\right) \\
		&\geq&  \sum_{0\leq\lambda_i<1,1\leq i\leq k_d}  \left(\sum_{\sigma \in \Ker \phi_{d, \Y} } e^{-\pi\lVert \sigma \rVert^2}\prod_{i}e^{-\pi \lambda_i^2\lVert\sigma_i \rVert^2}\right)\\
		&=& \exp\Big(h^0_{\theta}(\Ker \phi_{d, \Y}) \Big)\sum_{0\leq\lambda_i\leq 1,1\leq i\leq k_d}  \left(\prod_{i}e^{-\pi \lambda_i^2\lVert\sigma_i \rVert^2}\right)\\
		&\geq & \exp\Big(h^0_{\theta}(\Ker \phi_{d, \Y}) \Big)\sum_{0\leq\lambda_i\leq 1,1\leq i\leq k_d} e^{-\pi k_d e^{-\varepsilon_0 d}}\\
		&\geq & \exp\Big(h^0_{\theta}(\Ker \phi_{d, \Y}) \Big)\cdot 2^{k_d  } e^{-\pi k_de^{-\varepsilon_0 d}}.
	\end{eqnarray*}
	Up to enlarging $d_0$, we may assume that for any $d\geq d_0$,
	\[
		\frac{1}{2}<e^{-\pi h^0(\X_{\Q}, \Li^{\otimes d})\cdot e^{-\varepsilon_0 d}}<1.
	\]
	So for such $d$, we have $e^{-\pi k_de^{-\varepsilon_0 d}}>\frac{1}{2}$ and that
	\[
		\exp\Big(h^0_{\theta}(\X, \Li^{\otimes d} )\Big)> \frac{1}{2}\exp\Big(h^0_{\theta}(\Ker \phi_{d, \Y}) \Big)\cdot 2^{k_d},
	\]
	which, when taking $\eta<\log 2$, induces
	\begin{eqnarray*}
		\frac{\sum_{\sigma\in \mathrm{Ker}(\phi_{d, \Y}) } \exp{(-\pi||\sigma||^{2})}}{\sum_{\sigma\in \mathrm{H}^0(\X, \Li^{\otimes d}) } \exp{(-\pi||\sigma||^{2})}}
		&=& \exp\Big( h^0_{\theta}(\Ker \phi_{d, \Y})- h^0_{\theta}(\X, \Li^{\otimes d} )\Big) \\
		&<& 2e^{-k_d\log 2}\\
		&=& O(\exp{(-\eta d^{\mathrm{dim} \Y})}),
	\end{eqnarray*}
	as $k_d\geq Cd^{\dim \Y}$. Here the constant involved in is also independent of $\Y$.
\end{proof}
The following lemma is a basic property we use to relate the restrictions to fibers with the modulo $N$ map. See Lemma 2.3 in \cite{Wa22} for a proof.
\begin{lem}\label{freemod}
	Let $\X$ be a projective arithmetic variety, and let $\Li$ be an ample Hermitian line bundle on $\X$. There is a positive integer $d_0$ such that when $d\geq d_0$, we have 
	\[
		 \ho^0(\X_N,\li^{\otimes d})\simeq\ho^0(\X,\li^{\otimes d})/\big(N\cdot \ho^0(\X,\li^{\otimes d})\big)
	\]
	for any positive integer $N$.
\end{lem}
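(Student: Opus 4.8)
The plan is to realize the reduction-mod-$N$ map as a segment of the long exact cohomology sequence attached to multiplication by $N$ on $\li^{\otimes d}$, and then to kill the obstruction term by Serre vanishing, the key point being that the resulting threshold does not depend on $N$.

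First, since $\X$ is flat over $\Spec\ \Z$, the sheaf $\Oc_{\X}$ is $\Z$-torsion free, so multiplication by $N$ is injective on it and we have a short exact sequence of sheaves on $\X$
\[
	0 \longrightarrow \Oc_{\X} \xrightarrow{\ N\ } \Oc_{\X} \longrightarrow \Oc_{\X_N} \longrightarrow 0 ,
\]
where $\Oc_{\X_N}=\Oc_{\X}/N\Oc_{\X}$ is the structure sheaf of $\X_N=\X\times_{\Spec\ \Z}\Spec\ \Z/N\Z$. Tensoring with the locally free sheaf $\li^{\otimes d}$ preserves exactness, giving
\[
	0 \longrightarrow \li^{\otimes d} \xrightarrow{\ N\ } \li^{\otimes d} \longrightarrow \li^{\otimes d}\big|_{\X_N} \longrightarrow 0 .
\]

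Next I would pass to cohomology on $\X$. Since $\X_N$ is a closed subscheme of $\X$, one identifies $\ho^0(\X,\li^{\otimes d}|_{\X_N})$ with $\ho^0(\X_N,\Li^{\otimes d})$, and the initial terms of the long exact sequence read
\[
	0 \to \ho^0(\X,\li^{\otimes d}) \xrightarrow{\ N\ } \ho^0(\X,\li^{\otimes d}) \to \ho^0(\X_N,\Li^{\otimes d}) \to \ho^1(\X,\li^{\otimes d}) \xrightarrow{\ N\ } \ho^1(\X,\li^{\otimes d}) .
\]
Extracting the middle, the restriction map to $\X_N$ sits in a short exact sequence
\[
	0 \to \ho^0(\X,\Li^{\otimes d})\big/\big(N\cdot\ho^0(\X,\Li^{\otimes d})\big) \to \ho^0(\X_N,\Li^{\otimes d}) \to \ho^1(\X,\li^{\otimes d})[N] \to 0 ,
\]
with $\ho^1(\X,\li^{\otimes d})[N]$ the $N$-torsion subgroup. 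So it suffices to arrange $\ho^1(\X,\li^{\otimes d})=0$. As $\X$ is projective over the noetherian ring $\Z$ and $\li$ is ample, Serre vanishing yields an integer $d_0$, depending only on $\X$ and $\li$, such that $\ho^i(\X,\li^{\otimes d})=0$ for every $i>0$ and every $d\geq d_0$; taking $i=1$ completes the proof.

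There is no real obstacle here beyond assembling these standard facts. The one point worth emphasizing — and really the entire content of the lemma — is that $d_0$ is independent of $N$: the vanishing invoked concerns $\ho^1$ of a fixed sheaf on the fixed scheme $\X$, proved once and for all before any base change to $\Z/N\Z$, so no uniformity issue arises.
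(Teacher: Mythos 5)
Your proof is correct, and it is the standard argument one would expect behind this lemma. The paper itself does not reproduce a proof but cites Lemma 2.3 of \cite{Wa20}; your route --- the short exact sequence $0 \to \Oc_{\X} \xrightarrow{N} \Oc_{\X} \to \Oc_{\X_N} \to 0$ coming from flatness, tensored with $\li^{\otimes d}$, followed by the long exact sequence identifying the obstruction as $\ho^1(\X,\li^{\otimes d})[N]$ and killed by Serre vanishing uniformly in $N$ --- is exactly the natural mechanism, and you correctly isolate that the uniformity in $N$ comes for free because the vanishing concerns $\ho^1$ of the fixed sheaf $\li^{\otimes d}$ on $\X$, established before any reduction modulo $N$.
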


Now we present the second estimate.
\begin{prop}\label{restheta}
	Let $\X$ be a projective arithmetic variety, and let $\Li$ be an ample Hermitian line bundle on $\X$. For a positive integer $N$, consider the restriction modulo $N$ map
	\[
		\phi_{d,N}: \ho^0(\X,\Li^{\otimes d}) \longrightarrow \ho^0(\X_{N},\li^{\otimes d}).
	\]
	For any $\sigma_0\in \ho^0(\X,\Li^{\otimes d})$, we have
	\begin{equation}
		\frac{\sum_{\sigma\in \mathrm{Ker}(\phi_{d,N})} \exp(-\pi \lVert\sigma+\sigma_0\rVert^2) }{ \sum_{\sigma \in \mathrm{H}^0(\X, \Li^{\otimes d})} \exp(-\pi \lVert\sigma\rVert^2) }
		< \frac{\exp\left(h^1_{\theta}(\mathrm{Ker}(\phi_{d,N}))\right)}{\#\ho^0(\X_{N},\li^{\otimes d})},\label{2.10.1}
	\end{equation}
	and
	\begin{equation}
		\frac{\sum_{\sigma\in \mathrm{Ker}(\phi_{d,N})} \exp(-\pi \lVert\sigma+\sigma_0\rVert^2) }{ \sum_{\sigma \in \mathrm{H}^0(\X, \Li^{\otimes d})} \exp(-\pi \lVert\sigma\rVert^2) }
		>\frac{\left[2\cdot\exp\left(-h^1_{\theta}(\X, \Li^{\otimes d} )\right)- \exp\left(h^1_{\theta}(\mathrm{Ker} \phi_{d,N} )\right) \right]}{\#\ho^0(\X_{N},\li^{\otimes d})}. \label{2.10.2}
	\end{equation}
	In particular, if $N$ is fixed, we have for any $\sigma_0\in \ho^0(\X,\Li^{\otimes d})$,
	\[
		\lim_{d\rightarrow \infty}\frac{\sum_{\sigma\in \mathrm{Ker}(\phi_{d,N})} \exp(-\pi \lVert\sigma+\sigma_0\rVert^2) }{ \sum_{\sigma \in \mathrm{H}^0(\X, \Li^{\otimes d})} \exp(-\pi \lVert\sigma\rVert^2) } \cdot \left(\#\ho^0(\X_{N},\li^{\otimes d})\right)=1.
	\]
\end{prop}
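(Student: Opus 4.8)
The plan is to establish the two displayed inequalities \eqref{2.10.1} and \eqref{2.10.2} by exploiting the orthogonal-type decomposition of $\ho^0(\X,\Li^{\otimes d})$ relative to the kernel of $\phi_{d,N}$, and then to pass to the limit using the vanishing of $h^1_\theta$ proved in the corollary to Proposition \ref{h1theta}. First I would fix $d$ large enough that Lemma \ref{freemod} applies, so that $\ho^0(\X_N,\Li^{\otimes d})\cong \ho^0(\X,\Li^{\otimes d})/N\ho^0(\X,\Li^{\otimes d})$ and hence $\phi_{d,N}$ is surjective with kernel of finite index exactly $\#\ho^0(\X_N,\Li^{\otimes d})=N^{r_d}$, where $r_d=\mathrm{rk}\,\ho^0(\X,\Li^{\otimes d})$. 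Writing $\overline E=\ho^0(\X,\Li^{\otimes d})$ and $\overline K=\Ker(\phi_{d,N})$, the ambient theta sum splits as
\[
	\sum_{\sigma\in \overline E}e^{-\pi\lVert\sigma\rVert^2}=\sum_{\tau\in \overline E/\overline K}\ \sum_{\sigma\in \overline K}e^{-\pi\lVert\sigma+\tau\rVert^2}=\sum_{\tau\in \overline E/\overline K}e^{h^0_\theta(\overline K+\tau)},
\]
so the quantity to be estimated is one term $e^{h^0_\theta(\overline K+\sigma_0)}$ of a sum of $\#\ho^0(\X_N,\Li^{\otimes d})$ such terms, each positive.

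The key point is then a uniform comparison of the translated theta sums $\sum_{\sigma\in\overline K}e^{-\pi\lVert\sigma+\tau\rVert^2}$ for different $\tau$. I expect this to follow from the basic inequality, valid for any Hermitian lattice $\overline L$ and any vector $v$ in the ambient space,
\[
	e^{-h^1_\theta(\overline L)}\le \frac{\sum_{\sigma\in \overline L}e^{-\pi\lVert\sigma+v\rVert^2}}{\sum_{\sigma\in \overline L}e^{-\pi\lVert\sigma\rVert^2}}\le 1,
\]
which is a standard property of theta functions of lattices (the upper bound is immediate by comparing term by term after recentering; the lower bound comes from the Poisson summation / functional equation used by Bost, since the ratio on the left is a normalized theta function bounded below by the reciprocal of the value $e^{h^0_\theta}/\,(\text{covolume})$ controlled by $h^1_\theta$ of $\overline L$ — I would cite the relevant estimate in \cite{Bo15}, or reprove it from $h^0_\theta-h^1_\theta=\widehat{\deg}$ applied to $\overline K$). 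Granting this, summing over the $\#\ho^0(\X_N,\Li^{\otimes d})$ cosets $\tau$ gives
\[
	e^{h^0_\theta(\overline K)}\cdot \#\ho^0(\X_N,\Li^{\otimes d})\ \ge\ e^{h^0_\theta(\overline E)}\ \ge\ e^{h^0_\theta(\overline K)}\cdot\#\ho^0(\X_N,\Li^{\otimes d})\cdot e^{-h^1_\theta(\overline K)},
\]
using $h^1_\theta(\overline K)\ge h^1_\theta(\overline E)$ (monotonicity of $h^1_\theta$ under sublattices, again from \cite{Bo15}), which is not quite the stated bound but close; I would then feed in the term-by-term inequality for the single coset $\sigma_0$ together with $\sum_\tau e^{h^0_\theta(\overline K+\tau)}=e^{h^0_\theta(\overline E)}$ to isolate $e^{h^0_\theta(\overline K+\sigma_0)}$. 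For \eqref{2.10.1}, bounding $e^{h^0_\theta(\overline K+\sigma_0)}\le e^{h^0_\theta(\overline K)}$ and $e^{h^0_\theta(\overline E)}\ge e^{h^0_\theta(\overline K)}\cdot\#\ho^0(\X_N,\Li^{\otimes d})\cdot e^{-h^1_\theta(\overline K)}$ does not yet give it, so instead I would use $e^{h^0_\theta(\overline K)}=e^{h^0_\theta(\overline E)}\cdot e^{h^1_\theta(\overline K)-h^1_\theta(\overline E)}/\#\ho^0(\X_N,\Li^{\otimes d})$, which comes directly from Poisson-Riemann-Roch applied to both $\overline E$ and $\overline K$ (their covolumes differ exactly by the index $\#\ho^0(\X_N,\Li^{\otimes d})$), together with $h^1_\theta(\overline E)>0$; combined with $e^{h^0_\theta(\overline K+\sigma_0)}\le e^{h^0_\theta(\overline K)}$ this yields \eqref{2.10.1}. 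For \eqref{2.10.2} I would use instead $e^{h^0_\theta(\overline K+\sigma_0)}=e^{h^0_\theta(\overline E)}-\sum_{\tau\ne\sigma_0}e^{h^0_\theta(\overline K+\tau)}\ge e^{h^0_\theta(\overline E)}-(\#\ho^0(\X_N,\Li^{\otimes d})-1)e^{h^0_\theta(\overline K)}$, substitute the Poisson-Riemann-Roch expression for $e^{h^0_\theta(\overline K)}$, and simplify, the factor $2\exp(-h^1_\theta(\X,\Li^{\otimes d}))-\exp(h^1_\theta(\Ker\phi_{d,N}))$ appearing after dividing through by $e^{h^0_\theta(\overline E)}$ and using $1-x\ge\,$(linear lower bound) on the exponentials.

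Finally, for the limit assertion: multiplying the ratio by $\#\ho^0(\X_N,\Li^{\otimes d})$, the inequalities \eqref{2.10.1} and \eqref{2.10.2} squeeze it between $2e^{-h^1_\theta(\X,\Li^{\otimes d})}-e^{h^1_\theta(\Ker\phi_{d,N})}$ and $e^{h^1_\theta(\Ker\phi_{d,N})}$. Since $N$ is fixed, $\Ker\phi_{d,N}$ is a finite-index sublattice of $\ho^0(\X,\Li^{\otimes d})$, and one can view it as $\ho^0$ of a coherent subsheaf $\overline{\mathcal F}\subset\ho^0(\X,\Li^{\otimes d})$ of the appropriate type (it contains $N\cdot\ho^0(\X,\Li^{\otimes d})$, so it corresponds to a subsheaf with $\varepsilon_{\mathcal F}$ available from Proposition \ref{epsilon}); then the Corollary to Proposition \ref{h1theta} gives $h^1_\theta(\Ker\phi_{d,N})\to 0$ and $h^1_\theta(\X,\Li^{\otimes d})\to 0$ as $d\to\infty$, so both bounds converge to $1$ and the squeeze theorem finishes the argument.

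The main obstacle I anticipate is the clean derivation of the two-sided term-by-term comparison of translated theta sums with error exactly $e^{\pm h^1_\theta}$ — i.e. pinning down that the normalized translated theta function of a lattice lies in the interval $[e^{-h^1_\theta},1]$ — and, relatedly, identifying $\Ker\phi_{d,N}$ with the sections of a coherent subsheaf so that the uniform $h^1_\theta\to 0$ estimate from Proposition \ref{h1theta} genuinely applies (one must check $N\ho^0\subset\Ker\phi_{d,N}$ sits inside the framework, using Lemma \ref{freemod} to control it for $d$ large). Everything else is bookkeeping with Poisson-Riemann-Roch and the monotonicity $h^1_\theta(\overline K)\ge h^1_\theta(\overline E)$.
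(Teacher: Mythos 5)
Your treatment of inequality \eqref{2.10.1} lands, after some initial hedging, on the paper's argument: bound the translated theta sum over $\Ker\phi_{d,N}$ by the untranslated one (Bost's formula (2.1.7)), then compare $e^{h^0_\theta(\Ker\phi_{d,N})}$ with $e^{h^0_\theta(\X,\Li^{\otimes d})}$ via Poisson--Riemann--Roch applied to both lattices together with $h^1_\theta(\X,\Li^{\otimes d})>0$. That part is essentially correct and matches the paper.

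The route you propose for \eqref{2.10.2} does not work. Writing $M=\#\ho^0(\X_N,\Li^{\otimes d})$ and $\overline K=\Ker\phi_{d,N}$, you subtract the $M-1$ other coset sums from $e^{h^0_\theta(\overline E)}$ and bound each of them by the maximum $e^{h^0_\theta(\overline K)}$. After substituting the Poisson--Riemann--Roch identity $e^{h^0_\theta(\overline K)}/e^{h^0_\theta(\overline E)}=e^{h^1_\theta(\overline K)-h^1_\theta(\overline E)}/M$ and dividing through, the best you can extract is
\[
\frac{e^{h^0_\theta(\overline K+\sigma_0)}}{e^{h^0_\theta(\overline E)}}\;\ge\;1-\frac{M-1}{M}\,e^{h^1_\theta(\overline K)-h^1_\theta(\overline E)}.
\]
Since $\overline K\subsetneq\overline E$ forces $h^1_\theta(\overline K)>h^1_\theta(\overline E)$, the coefficient is $>1$, so this right-hand side tends to a \emph{negative} constant as $M\to\infty$, whereas \eqref{2.10.2} divided by $M$ stays comparable to $1/M$. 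Replacing all $M-1$ competing cosets by the maximal one is simply too lossy, and no algebraic ``simplification'' can recover the stated bound from it. The paper instead uses a two-term inequality that does not degrade with $M$: Bost's formula (2.1.8), namely
\[
\sum_{\sigma\in\overline L}e^{-\pi\lVert\sigma+v\rVert^2}+\sum_{\sigma\in\overline L}e^{-\pi\lVert\sigma\rVert^2}\;\ge\;2\exp\bigl(\widehat{\deg}\,\overline L\bigr),
\]
which via Poisson summation just retains the $\xi=0$ dual term. Applying this to $\overline L=\overline K$, rewriting $\widehat{\deg}\overline K$ through the index $M$ and Poisson--Riemann--Roch on $\overline E$, and finally subtracting the already established upper bound on $\sum_{\sigma\in\overline K}e^{-\pi\lVert\sigma\rVert^2}$ gives \eqref{2.10.2} at once. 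Separately, the ``clean'' sandwich $e^{-h^1_\theta(\overline L)}\le\sum_{\sigma\in\overline L}e^{-\pi\lVert\sigma+v\rVert^2}/\sum_{\sigma\in\overline L}e^{-\pi\lVert\sigma\rVert^2}$ you tentatively posit is false in general: for $\overline L=\Z\subset\R$ with the standard metric and $v=1/2$, the ratio is roughly $0.84$ while $e^{-h^1_\theta(\Z)}\approx 0.92$; the correct Poisson-summation lower bound is the weaker $2e^{-h^1_\theta(\overline L)}-1$, which is exactly what (2.1.8) encodes. Your handling of the limit assertion, via decay of both $h^1_\theta$'s for fixed $N$, is fine and matches the paper.
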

\begin{proof}
	By Lemma \ref{freemod}, we may choose a positive integer $d_0$ such that when $d\geq d_0$, for any positive integer $N$, the restriction morphism
	\[
		\phi_{d,N}: \ho^0(\X,\Li^{\otimes d}) \longrightarrow \ho^0(\X_{N},\li^{\otimes d})
	\]
	is surjective.
	
	We start with the proof of the inequality \eqref{2.10.1}. First assume that $\sigma_0=0$ in $\ho^0(\X,\Li^{\otimes d})$. In this case we have
	\[
		\frac{\sum_{\sigma\in \mathrm{Ker}(\phi_{d,N})} \exp(-\pi \lVert\sigma\rVert^2) }{ \sum_{\sigma \in \mathrm{H}^0(\X, \overline{\mathcal{L}}^{\otimes d})} \exp(-\pi \lVert\sigma\rVert^2) }=\exp\left( h^0_{\theta}(\mathrm{Ker}(\phi_{d,N}))-h^0_{\theta}(\X, \Li^{\otimes d}) \right).
	\]
	By the Poisson-Riemann-Roch formula, we get
	\[
		h^0_{\theta}(\mathrm{Ker}(\phi_{d,N})) = \widehat{\deg}\mathrm{Ker}(\phi_{d,N}) +h^1_{\theta}(\mathrm{Ker}(\phi_{d,N})).
	\]
	On the other hand, as $\mathrm{Ker}(\phi_{d,N})$ is a sub-lattice of $\ho^0(\X, \Li^{\otimes d})$ of index 
	\[
		\#\ho^0(\X_{N},\li^{\otimes d})=N^{\mathrm{rk}\big(\ho^0(\X,\Li^{\otimes d})\big)},
	\]
	we have
	\begin{eqnarray*}
		\widehat{\deg}\mathrm{Ker}(\phi_{d,N}) &=&  \widehat{\deg}\overline{\mathcal L}^{\otimes d} -\mathrm{rk}\big(\ho^0(\X,\Li^{\otimes d})\big)\cdot\log N \\
		&=& h^0_{\theta}(\X, \Li^{\otimes d} )-h^1_{\theta}(\X, \Li^{\otimes d} ) -\mathrm{rk}\big(\ho^0(\X,\Li^{\otimes d})\big)\cdot\log N.
	\end{eqnarray*}
	This induces
	\[
		h^0_{\theta}(\mathrm{Ker}(\phi_{d,N})) - h^0_{\theta}(\X, \Li^{\otimes d} )=h^1_{\theta}(\mathrm{Ker}(\phi_{d,N}))-\mathrm{h}^1_{\theta}(\X, \Li^{\otimes d} ) -\mathrm{rk}\big(\ho^0(\X,\Li^{\otimes d})\big)\cdot\log N.
	\]
	Therefore we have
	\[
		-h^1_{\theta}(\X, \Li^{\otimes d} )<h^0_{\theta}(\mathrm{Ker}(\phi_{d,N})) - h^0_{\theta}(\X, \Li^{\otimes d} )+\mathrm{rk}\big(\ho^0(\X,\Li^{\otimes d})\big)\cdot\log N<h^1_{\theta}(\mathrm{Ker}(\phi_{d,N})),
	\]
	which implies
	\[
		\frac{\sum_{\sigma\in \mathrm{Ker}(\phi_{d,N})} \exp(-\pi \lVert\sigma\rVert^2) }{ \sum_{\sigma \in \mathrm{H}^0(\X, \overline{\mathcal{L}}^{\otimes d})} \exp(-\pi \lVert\sigma\rVert^2) }
		< \frac{\exp\left(h^1_{\theta}(\mathrm{Ker}(\phi_{d,N}))\right)}{\#\ho^0(\X_{N},\li^{\otimes d})}.
	\]
	By the formula (2.1.7) in \cite{Bo20}, for any $\sigma_0\in \ho^0(\X,\Li^{\otimes d})$, we have
	\begin{eqnarray*}
		\frac{\sum_{\sigma\in \mathrm{Ker}(\phi_{d,N})} \exp(-\pi \lVert\sigma+\sigma_0\rVert^2) }{ \sum_{\sigma \in \mathrm{H}^0(\X, \overline{\mathcal{L}}^{\otimes d})} \exp(-\pi \lVert\sigma\rVert^2) } &\leq& \frac{\sum_{\sigma\in \mathrm{Ker}(\phi_{d,N})} \exp(-\pi \lVert\sigma\rVert^2) }{ \sum_{\sigma \in \mathrm{H}^0(\X, \overline{\mathcal{L}}^{\otimes d})} \exp(-\pi \lVert\sigma\rVert^2) } \\
		&<& \frac{\exp\left(h^1_{\theta}(\mathrm{Ker}(\phi_{d,N}))\right)}{\#\ho^0(\X_{N},\li^{\otimes d})}.
	\end{eqnarray*}
	This finish the proof of the inequality \eqref{2.10.1}.\\
	
	The inequality \eqref{2.10.2} is a consequence of the formula (2.1.8) in \cite{Bo20}. In fact, this formula tells us that
	\begin{eqnarray*}
		\sum_{\sigma\in \mathrm{Ker}(\phi_{d,N})} \exp(-\pi ||\sigma+\sigma_0||^2)+\sum_{\sigma\in \mathrm{Ker}(\phi_{d,N})} \exp(-\pi ||\sigma||^2)\geq 2\exp\left(\widehat{\deg} \mathrm{Ker}(\phi_{d,N})\right).
	\end{eqnarray*}
	We can transform the right side to
	\begin{eqnarray*}
		2\exp\left(\widehat{\deg} \mathrm{Ker}(\phi_{d,N})\right) &=& 2\exp\left( \widehat{\deg}\overline{\mathcal L}^{\otimes d} - \mathrm{rk}\big(\ho^0(\X,\Li^{\otimes d})\big)\cdot\log N \right) \\
		&=& 2\cdot \frac{\exp\left( h^0_{\theta}(\X, \overline{\mathcal L}^{\otimes d} )-h^1_{\theta}(\X, \overline{\mathcal L}^{\otimes d} )\right)}{\#\ho^0(\X_{N},\li^{\otimes d})} \\
		&=& 2\cdot \frac{\exp\left( h^0_{\theta}(\X, \overline{\mathcal L}^{\otimes d} )\right)}{\#\ho^0(\X_{N},\li^{\otimes d})}  \cdot \exp\left(-h^1_{\theta}(\X, \overline{\mathcal L}^{\otimes d} )\right).
	\end{eqnarray*}
	This suggests that
	\begin{eqnarray*}
		& &\sum_{\sigma\in \mathrm{Ker}(\phi_{d,N})} \exp(-\pi ||\sigma+\sigma_0||^2)\\
		&\geq&  2\exp\left(\widehat{\deg} \mathrm{Ker}(\phi_{d,N})\right)-\sum_{\sigma\in \mathrm{Ker}(\phi_{d,N})} \exp(-\pi ||\sigma||^2)\\
		&>& 2\cdot \frac{\exp\left( h^0_{\theta}(\X, \overline{\mathcal L}^{\otimes d} )\right)}{\#\ho^0(\X_{N},\li^{\otimes d})}  \cdot \exp\left(-h^1_{\theta}(\X, \overline{\mathcal L}^{\otimes d} )\right)-\frac{\exp\left( h^0_{\theta}(\X, \overline{\mathcal L}^{\otimes d} )\right)}{\#\ho^0(\X_{N},\li^{\otimes d})}\exp\left(h^1_{\theta}(\mathrm{Ker}(\phi_{d,N}))\right),
	\end{eqnarray*}
	which proves the inequality (\ref{2.10.2}).

	Note that Corollary \ref{h1limit} applied to the trivial case $\overline{\mathcal{F}}=\overline{\mathcal{O}_{\X}}$, where $\overline{\mathcal{O}_{\X}}$ is equipped with the trivial metric, implies that $\lim_{d\rightarrow \infty}h^1_{\theta}(\X, \overline{\mathcal L}^{\otimes d} )=0$. If we take $\overline{\mathcal{F}}=N\cdot \overline{\mathcal{O}_{\X}}\subset \overline{\mathcal{O}_{\X}}$, we get $\ho^0(\X,\Li^{\otimes d}\otimes\overline{\mathcal{F}})=\mathrm{Ker}(\phi_{d,N})$. Then by Corollary \ref{h1limit} we also have 
	\[
		\lim_{d\rightarrow \infty}h^1_{\theta}(\mathrm{Ker}(\phi_{d,N}))=0.
	\]
	Therefore when $N$ is fixed, for any $\sigma_0\in \ho^0(\X,\Li^{\otimes d})$, the limit
	\[
		\lim_{d\rightarrow \infty}\frac{\sum_{\sigma\in \mathrm{Ker}(\phi_{d,N})} \exp(-\pi \lVert\sigma+\sigma_0\rVert^2) }{ \sum_{\sigma \in \mathrm{H}^0(\X, \Li^{\otimes d})} \exp(-\pi \lVert\sigma\rVert^2) } \cdot \left(\#\ho^0(\X_{N},\li^{\otimes d})\right)=1
	\]
	is a result of the inequalities (\ref{2.10.1}) and (\ref{2.10.2}) by making $d$ tend to infinity.
\end{proof}
\begin{cor}\label{cor}
	For a fixed integer $N>1$, choose a subset $E_d\subset \mathrm{H}^0(\X_N, \li^{\otimes d})$ for each $d>0$ and write $E= \bigcup_{d>0} E_d$. Then $E$ is of density $\rho$ (defined as the limit $\lim_{d\rightarrow \infty}\frac{\#E_d}{\#\mathrm{H}^0(\X_N, \li^{\otimes d})}$ if exists) for some $0\leq \rho \leq 1$ if and only if we have
	\begin{eqnarray*}
		\lim_{d\rightarrow \infty} \frac{\sum_{\sigma \in \phi_{d,N}^{-1}(E_d)} (e^{-\pi ||\sigma||^2})}{\sum_{\sigma \in \mathrm{H}^0(\X, \Li^{\otimes d})}(e^{-\pi ||\sigma||^2})}=\rho
	\end{eqnarray*}
	i.e. if and only if the subset $E'= \bigcup_{d>0}  \phi_{d,N}^{-1}(E_d)$ is of $\theta$-density $\rho$.
\end{cor}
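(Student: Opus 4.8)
\textbf{Proof plan for Corollary \ref{cor}.}

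The plan is to relate the weighted count over $\phi_{d,N}^{-1}(E_d)$ to the plain count of $E_d$ by decomposing $\phi_{d,N}^{-1}(E_d)$ into cosets of $\mathrm{Ker}(\phi_{d,N})$, one coset for each element of $E_d$, and then controlling the weight of each coset by Proposition \ref{restheta}. Concretely, for each $\bar\tau\in E_d$ pick a lift $\sigma_{\bar\tau}\in\ho^0(\X,\Li^{\otimes d})$; then $\phi_{d,N}^{-1}(E_d)=\bigsqcup_{\bar\tau\in E_d}\big(\mathrm{Ker}(\phi_{d,N})+\sigma_{\bar\tau}\big)$, so
\[
	\frac{\sum_{\sigma\in \phi_{d,N}^{-1}(E_d)}e^{-\pi\lVert\sigma\rVert^2}}{\sum_{\sigma\in\ho^0(\X,\Li^{\otimes d})}e^{-\pi\lVert\sigma\rVert^2}}
	=\sum_{\bar\tau\in E_d}\frac{\sum_{\sigma\in\mathrm{Ker}(\phi_{d,N})}e^{-\pi\lVert\sigma+\sigma_{\bar\tau}\rVert^2}}{\sum_{\sigma\in\ho^0(\X,\Li^{\otimes d})}e^{-\pi\lVert\sigma\rVert^2}}.
\]
Now apply the two inequalities \eqref{2.10.1} and \eqref{2.10.2} of Proposition \ref{restheta} to each summand (valid for $d\geq d_0$, where $\ho^0(\X,\Li^{\otimes d})$ is free so $\phi_{d,N}$ is surjective and $\#E_d\le\#\ho^0(\X_N,\Li^{\otimes d})$). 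Summing over the $\#E_d$ values of $\bar\tau$ gives the two-sided bound
\[
	\frac{\#E_d}{\#\ho^0(\X_N,\Li^{\otimes d})}\Big(2e^{-h^1_\theta(\X,\Li^{\otimes d})}-e^{h^1_\theta(\mathrm{Ker}\,\phi_{d,N})}\Big)
	< \frac{\sum_{\sigma\in \phi_{d,N}^{-1}(E_d)}e^{-\pi\lVert\sigma\rVert^2}}{\sum_{\sigma\in\ho^0(\X,\Li^{\otimes d})}e^{-\pi\lVert\sigma\rVert^2}}
	< \frac{\#E_d}{\#\ho^0(\X_N,\Li^{\otimes d})}\,e^{h^1_\theta(\mathrm{Ker}\,\phi_{d,N})}.
\]

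It then remains to show that both correction factors $e^{h^1_\theta(\mathrm{Ker}\,\phi_{d,N})}$ and $2e^{-h^1_\theta(\X,\Li^{\otimes d})}-e^{h^1_\theta(\mathrm{Ker}\,\phi_{d,N})}$ tend to $1$ as $d\to\infty$, with $N$ fixed. For this I would invoke the corollary to Proposition \ref{h1theta}: taking $\overline{\mathcal M}=\overline{\mathcal F}=\Oc_{\X}$ (trivially metrized) gives $h^1_\theta(\X,\Li^{\otimes d})\to 0$; and $\mathrm{Ker}(\phi_{d,N})=N\cdot\ho^0(\X,\Li^{\otimes d})$ can be realized as $\ho^0(\X,\Li^{\otimes d}\otimes\overline{\mathcal F})$ for a suitable coherent subsheaf $\overline{\mathcal F}\subset\overline{\mathcal M}$ with its own constant $\varepsilon_{\mathcal F}$ (for instance $\overline{\mathcal F}$ the subsheaf $N\Oc_\X$ inside $\Oc_\X$, or equivalently rescale the metric by $N$), so $h^1_\theta(\mathrm{Ker}\,\phi_{d,N})\to 0$ as well. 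Hence both correction factors converge to $1$, and the displayed bound shows that $\frac{\sum_{\sigma\in \phi_{d,N}^{-1}(E_d)}e^{-\pi\lVert\sigma\rVert^2}}{\sum_{\sigma\in\ho^0(\X,\Li^{\otimes d})}e^{-\pi\lVert\sigma\rVert^2}}$ and $\frac{\#E_d}{\#\ho^0(\X_N,\Li^{\otimes d})}$ are asymptotically equivalent; one has a limit equal to $\rho$ if and only if the other does.

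The main obstacle I anticipate is the bookkeeping around $\mathrm{Ker}(\phi_{d,N})$: one must check that it really is of the form $\ho^0(\X,\Li^{\otimes d}\otimes\overline{\mathcal F})$ for a fixed coherent subsheaf (independent of $d$) so that Proposition \ref{h1theta} and its corollary apply uniformly in $d$ with $N$ fixed — using Lemma \ref{freemod} to identify $\mathrm{Ker}(\phi_{d,N})$ with $N\cdot\ho^0(\X,\Li^{\otimes d})$, this is the statement that multiplication by $N$ on $\Oc_\X$ furnishes such an $\overline{\mathcal F}$, which is straightforward but should be stated carefully. A secondary point is that in \eqref{2.10.2} the lower bound is only useful once $2e^{-h^1_\theta(\X,\Li^{\otimes d})}>e^{h^1_\theta(\mathrm{Ker}\,\phi_{d,N})}$, which holds for $d$ large since both exponents go to $0$; this is why the equivalence is an asymptotic one and the statement is phrased with a limit.
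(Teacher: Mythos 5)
Your proof is correct and follows essentially the same route as the paper: decompose $\phi_{d,N}^{-1}(E_d)$ into cosets of $\mathrm{Ker}(\phi_{d,N})$, apply the two inequalities of Proposition \ref{restheta} to each coset, sum, and let the correction factors $\exp\big(h^1_\theta(\mathrm{Ker}\,\phi_{d,N})\big)$ and $2\exp\big(-h^1_\theta(\X,\Li^{\otimes d})\big)-\exp\big(h^1_\theta(\mathrm{Ker}\,\phi_{d,N})\big)$ tend to $1$ as $d\to\infty$ with $N$ fixed. Your extra step of realizing $\mathrm{Ker}(\phi_{d,N})\simeq N\cdot\ho^0(\X,\Li^{\otimes d})$ as $\ho^0(\X,\Li^{\otimes d}\otimes\overline{\mathcal F})$ with $\overline{\mathcal F}=N\Oc_\X\subset\Oc_\X$ in order to invoke the corollary to Proposition \ref{h1theta} just makes explicit what the paper leaves implicit in the closing ``In particular'' of Proposition \ref{restheta}.
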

\begin{proof}
	As for any $\sigma'\in \mathrm{H}^0(\X_N, \li^{\otimes d})$ and any $\sigma'_0\in \mathrm{H}^0(\X_N, \li^{\otimes d})$ with image $\sigma'$ by $\phi_{d,N}$, we have
	\[
		\phi_{d,N}^{-1}(\sigma')=\sigma'_0+\mathrm{Ker}(\phi_{d,N}),
	\]
	the above proposition tells us that for any $\sigma'\in \mathrm{H}^0(\X_N, \li^{\otimes d})$,
	\begin{equation}
		\frac{\sum_{\sigma\in \phi_{d,N}^{-1}(\sigma')} \exp(-\pi \lVert\sigma\rVert^2) }{ \sum_{\sigma \in \mathrm{H}^0(\X, \Li^{\otimes d})} \exp(-\pi \lVert\sigma\rVert^2) }
		< \frac{\exp\left(h^1_{\theta}(\mathrm{Ker}(\phi_{d,N}))\right)}{\#\ho^0(\X_{N},\li^{\otimes d})},
	\end{equation}
	and
	\begin{equation}
		\frac{\sum_{\sigma\in\phi_{d,N}^{-1}(\sigma')} \exp(-\pi \lVert\sigma\rVert^2) }{ \sum_{\sigma \in \mathrm{H}^0(\X, \Li^{\otimes d})} \exp(-\pi \lVert\sigma\rVert^2) }
		>\frac{\left[2\cdot\exp\left(-h^1_{\theta}(\X, \Li^{\otimes d} )\right)- \exp\left(h^1_{\theta}(\mathrm{Ker} \phi_{d,N} )\right) \right]}{\#\ho^0(\X_{N},\li^{\otimes d})}. 
	\end{equation}
	Summing up over all $\sigma'\in E_d$, we get
	\[
		 \frac{\sum_{\sigma \in \phi_{d,N}^{-1}(E_d)} (e^{-\pi ||\sigma||^2})}{\sum_{\sigma \in \mathrm{H}^0(\X, \Li^{\otimes d})}(e^{-\pi ||\sigma||^2})}
		 < \exp\left(h^1_{\theta}(\mathrm{Ker}(\phi_{d,N}))\right) \frac{\#E_d}{\#\ho^0(\X_{N},\li^{\otimes d})},
	\]
	and
	\[
		 \frac{\sum_{\sigma \in \phi_{d,N}^{-1}(E_d)} (e^{-\pi ||\sigma||^2})}{\sum_{\sigma \in \mathrm{H}^0(\X, \Li^{\otimes d})}(e^{-\pi ||\sigma||^2})}
		 >\left[2\cdot\exp\left(-h^1_{\theta}(\X, \Li^{\otimes d} )\right)- \exp\left(h^1_{\theta}(\mathrm{Ker} \phi_{d,N} )\right) \right] \frac{\#E_d}{\#\ho^0(\X_{N},\li^{\otimes d})},
	\]
	As $N$ is fixed, we have
	\[
		\lim_{d\rightarrow \infty } \exp\left(h^1_{\theta}(\mathrm{Ker}(\phi_{d,N}))\right)=\lim_{d\rightarrow \infty } \left[2\cdot\exp\left(-h^1_{\theta}(\X, \Li^{\otimes d} )\right)- \exp\left(h^1_{\theta}(\mathrm{Ker} \phi_{d,N} )\right) \right]=1.
	\]
	Hence
	\[
		\lim_{d\rightarrow \infty} \frac{\sum_{\sigma \in \phi_{d,N}^{-1}(E_d)} (e^{-\pi ||\sigma||^2})}{\sum_{\sigma \in \mathrm{H}^0(\X, \Li^{\otimes d})}(e^{-\pi ||\sigma||^2})}=\lim_{d\rightarrow \infty} \frac{\#E_d}{\#\ho^0(\X_{N},\li^{\otimes d})}
	\]
	and we conclude.
\end{proof}

\begin{cor}\label{thetadensity}
	Let $\X$ be a projective arithmetic variety, and let $\Li$ be an ample Hermitian line bundle on $\X$. Let $\varepsilon_0$ be a constant as in Proposition \ref{epsilon}, and choose a constant $\delta<\varepsilon_0$. For an integer $N>1$, let 
	$\phi_{d,N}: \ho^0(\X,\Li^{\otimes d}) \longrightarrow \ho^0(\X_{N},\li^{\otimes d})$
	be the restriction map. When $d$ is large enough, we can find a constant $K_0>0$ such that for any $N\leq e^{\delta d}$ and any subset $E\subset \ho^0(\X_{N},\li^{\otimes d}) $,
	\begin{eqnarray*}
		\left| \frac{\sum_{\sigma\in \phi_{d,N}^{-1}(E)} \exp(-\pi \lVert\sigma\rVert^2) }{ \sum_{\sigma \in \mathrm{H}^0(\X, \Li^{\otimes d})} \exp(-\pi \lVert\sigma\rVert^2) } -  \frac{\#E}{\#\ho^0(\X_{N},\li^{\otimes d})}\right| < 10 \exp \big(K_0d^{n-1}-\pi e^{2(\varepsilon_0-\delta) d}\big)\cdot \frac{\#E}{\#\ho^0(\X_{N},\li^{\otimes d})} .
	\end{eqnarray*}
	In particular, when $d$ is sufficiently large, for any $N\leq e^{\delta d}$ we have
	\[
		\frac{\sum_{\sigma\in \phi_{d,N}^{-1}(E)} \exp(-\pi \lVert\sigma\rVert^2) }{ \sum_{\sigma \in \mathrm{H}^0(\X, \Li^{\otimes d})} \exp(-\pi \lVert\sigma\rVert^2) } \leq 2\frac{\#E}{\#\ho^0(\X_{N},\li^{\otimes d})}.
	\]
\end{cor}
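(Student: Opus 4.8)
The statement should follow by combining Proposition~\ref{restheta} (in its pointwise form, as already recorded in the proof of Corollary~\ref{cor}) with the uniform estimate on $h^1_\theta$ coming from Proposition~\ref{h1theta} and its Corollary, applied both to $\ho^0(\X,\Li^{\otimes d})$ itself and to the sublattice $\mathrm{Ker}(\phi_{d,N})$. The point is that the error in Proposition~\ref{restheta} is governed by $h^1_\theta(\X,\Li^{\otimes d})$ and $h^1_\theta(\mathrm{Ker}(\phi_{d,N}))$; since $N\leq e^{\delta d}$ with $\delta<\varepsilon_0$, the lattice $\mathrm{Ker}(\phi_{d,N})$ still has a basis of vectors of norm controlled by $e^{-\varepsilon_0 d}$ scaled by $N$, hence of norm $\leq e^{-(\varepsilon_0-\delta)d}$, so Lemma~\ref{minimum} and Proposition~\ref{h1theta} apply to it with $\varepsilon_{\mathcal F}$ replaced by $\varepsilon_0-\delta$.

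**Steps.** First I would fix $d$ large enough that Lemma~\ref{freemod} applies, so $\ho^0(\X,\Li^{\otimes d})$ is free and $\phi_{d,N}$ is surjective with kernel of index $\#\ho^0(\X_N,\Li^{\otimes d})$. Next, I would observe that the dual lattice $\mathrm{Ker}(\phi_{d,N})^\vee$ contains $\ho^0(\X,\Li^{\otimes d})^\vee$ as a sublattice of index $\#\ho^0(\X_N,\Li^{\otimes d})$; but more directly, since a basis $\sigma_1,\dots,\sigma_r$ of $\ho^0(\X,\Li^{\otimes d})$ with $\lVert\sigma_i\rVert<e^{-\varepsilon_0 d}$ yields the basis obtained by multiplying the appropriate coordinates by $N\le e^{\delta d}$, $\mathrm{Ker}(\phi_{d,N})$ has a basis of vectors of norm $<e^{(\delta-\varepsilon_0)d}$. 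Running the argument of Lemma~\ref{minimum} and Proposition~\ref{h1theta} with this basis gives, for $d$ large, $0<h^1_\theta(\mathrm{Ker}(\phi_{d,N}))<\exp(h^1_\theta(\mathrm{Ker}(\phi_{d,N})))-1\leq 2\cdot 3^r e^{-\pi e^{2(\varepsilon_0-\delta)d}}$, where $r=\mathrm{rk}\,\ho^0(\X,\Li^{\otimes d})<Kd^n$ by asymptotic Riemann--Roch; the same bound (a fortiori, with $\varepsilon_0$ in place of $\varepsilon_0-\delta$) holds for $h^1_\theta(\X,\Li^{\otimes d})$. Hence both $h^1_\theta$ terms are $O(\exp(Kd^n-\pi e^{2(\varepsilon_0-\delta)d}))$, uniformly in $N\leq e^{\delta d}$. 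Then I would feed this into the two inequalities \eqref{2.10.1} and \eqref{2.10.2} of Proposition~\ref{restheta}, applied fibrewise to each $\sigma'\in E$ as in the proof of Corollary~\ref{cor}, and sum over $\sigma'\in E$: writing $h_1:=h^1_\theta(\mathrm{Ker}(\phi_{d,N}))$ and $h_1':=h^1_\theta(\X,\Li^{\otimes d})$, the ratio lies between $(2e^{-h_1'}-e^{h_1})\frac{\#E}{\#\ho^0(\X_N,\Li^{\otimes d})}$ and $e^{h_1}\frac{\#E}{\#\ho^0(\X_N,\Li^{\otimes d})}$. Using $e^{h_1}-1$ and $1-e^{-h_1'}$, and hence $|e^{h_1}-1|$ and $|2e^{-h_1'}-e^{h_1}-1|\leq (e^{h_1}-1)+2(1-e^{-h_1'})\leq 3(e^{h_1}-1)+\cdots$, all bounded by a constant times $\exp(K_0 d^n-\pi e^{2(\varepsilon_0-\delta)d})$, I get the claimed inequality with the factor $6$ (the numerical constant $6$ absorbing the $3^r$, the $2$'s, and the choice of $K_0\geq K\log 3$). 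The final ``in particular'' is immediate since the right-hand side tends to $0$.

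**Main obstacle.** The one genuine technical point is getting the bound on $h^1_\theta(\mathrm{Ker}(\phi_{d,N}))$ \emph{uniformly} in $N$ over the whole range $N\leq e^{\delta d}$, rather than for a single fixed $N$ as in Corollary~\ref{cor}. This is exactly where the hypothesis $\delta<\varepsilon_0$ is used: multiplying basis vectors by $N$ inflates their norms by at most $e^{\delta d}$, still leaving room $e^{-(\varepsilon_0-\delta)d}\to 0$, so the minimal norm of a nonzero vector of $\mathrm{Ker}(\phi_{d,N})^\vee$ is still $>e^{(\varepsilon_0-\delta)d}$, and Bost's estimate \cite[Prop.~2.6.2, Lemma~2.6.4]{Bo15} yields a bound depending only on $r$ and $\varepsilon_0-\delta$, not on $N$. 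I would make sure to state this minimal-norm claim for $\mathrm{Ker}(\phi_{d,N})^\vee$ carefully, since $\mathrm{Ker}(\phi_{d,N})^\vee$ is a super-lattice of $\ho^0(\X,\Li^{\otimes d})^\vee$ and one must check that passing to it does not create short vectors — which it does not, precisely because $\mathrm{Ker}(\phi_{d,N})\supset N\cdot\ho^0(\X,\Li^{\otimes d})$ forces $\mathrm{Ker}(\phi_{d,N})^\vee\subset \tfrac1N\ho^0(\X,\Li^{\otimes d})^\vee$, so a short vector there would give a vector of norm $<e^{(\varepsilon_0-\delta)d}$ in $\ho^0(\X,\Li^{\otimes d})^\vee$ after scaling, contradicting Lemma~\ref{minimum}. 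Everything else is bookkeeping with the two-sided bounds already in hand.
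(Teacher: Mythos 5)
Your proposal is correct and follows essentially the same route as the paper: the key step in both is to note that since $\mathrm{Ker}(\phi_{d,N})\simeq N\cdot\ho^0(\X,\Li^{\otimes d})$, the dual lattice $\mathrm{Ker}(\phi_{d,N})^\vee\simeq N^{-1}\ho^0(\X,\Li^{\otimes d})^\vee$ has first minimum $> N^{-1}e^{\varepsilon_0 d}\geq e^{(\varepsilon_0-\delta)d}$ (uniformly in $N\leq e^{\delta d}$), feed this into Bost's Proposition 2.6.2 together with the asymptotic Riemann--Roch bound on the rank to control $h^1_\theta(\mathrm{Ker}(\phi_{d,N}))$ and $h^1_\theta(\X,\Li^{\otimes d})$ by $2\exp(K_0 d^n-\pi e^{2(\varepsilon_0-\delta)d})$, and then apply the two-sided bounds of Proposition~\ref{restheta} coset by coset before summing over $E$. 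Your bookkeeping reproducing the factor $6$ and the ``in particular'' conclusion also matches; the only cosmetic slip is the phrase ``multiplying the appropriate coordinates by $N$'' --- one multiplies the whole basis by $N$ --- but this does not affect the argument.
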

\begin{proof}
	When $d$ is sufficiently large, $\ho^0(\X,\Li^{\otimes d})$ has a basis consisting of sections with norm smaller than $e^{-\varepsilon_0 d}$. Then Lemma \ref{minimum} tells us that any non-zero element of $\ho^0(\X,\Li^{\otimes d})^{\vee}$ has norm larger than $e^{\varepsilon_0 d}$. As $\Ker\phi_{d,N}\simeq N\cdot \ho^0(\X,\Li^{\otimes d})$, any non-zero element of $\Ker\phi_{d,N}^{\vee}$ has norm larger than $N^{-1}e^{\varepsilon_0 d}$. When $N\leq e^{\delta d}$, we have $N^{-1}e^{\varepsilon_0 d} \geq e^{(\varepsilon_0-\delta) d}$. We may apply Proposition 2.6.2 of \cite{Bo20} and get
	\begin{eqnarray*}
		0<\exp(h^1_{\theta}(\X,\Li^{\otimes d}))-1 &\leq& 3^{\mathrm{rk}\big(\ho^0(\X,\Li^{\otimes d})\big)}\left(1-\frac{\mathrm{rk}\big(\ho^0(\X,\Li^{\otimes d})\big)}{2\pi e^{2\varepsilon_0 d}} \right)^{-1}e^{-\pi e^{2\varepsilon_0 d}}, \\
		0<\exp(h^1_{\theta}(\Ker\phi_{d,N}))-1 &\leq& 3^{\mathrm{rk}\big(\ho^0(\X,\Li^{\otimes d})\big)}\left(1-\frac{\mathrm{rk}\big(\ho^0(\X,\Li^{\otimes d})\big)}{2\pi e^{2(\varepsilon_0-\delta) d}} \right)^{-1}e^{-\pi e^{2(\varepsilon_0-\delta) d}}.
	\end{eqnarray*}
	Note that when $d$ is sufficiently large, 
	\[
		\mathrm{rk}\big(\ho^0(\X,\Li^{\otimes d})\big)=\frac{\big((\li|_{\X_{\Q}})^{n-1} \big)}{(n-1)!} d^{n-1}+O(d^{n-2}),
	\]
	by asymptotic Riemann-Roch theorem. We can find a constant $K'_0>0$ such that 
	\[
		\mathrm{rk}\big(\ho^0(\X,\Li^{\otimes d})\big)\leq K'_0 d^{n-1}. 
	\]
	Set $K_0=K'_0\log 3$. Then when $d$ is large enough, for any $N\leq e^{\delta d}$ we have
	\begin{eqnarray*}
		\exp(h^1_{\theta}(\X,\Li^{\otimes d}))-1 &\leq&2 \exp\big( K_0d^{n-1}-\pi e^{2\varepsilon_0 d} \big)  \\
		\exp(h^1_{\theta}(\Ker\phi_{d,N}))-1&\leq& 2\exp \big(K_0d^{n-1}-\pi e^{2(\varepsilon_0-\delta) d}\big).
	\end{eqnarray*}
	In particular, we have $\exp(h^1_{\theta}(\X,\Li^{\otimes d}))-1\leq 1$, i.e. $\exp(h^1_{\theta}(\X,\Li^{\otimes d}))\leq 2$. Similarly we also have $\exp(h^1_{\theta}(\Ker\phi_{d,N}))\leq 2$.\\
	
	Now Proposition \ref{restheta} tells us that for any $\sigma_0\in \mathrm{H}^0(\X,\overline{\mathcal{L}}^{\otimes d})$,
	\begin{eqnarray*}
		& &\frac{\sum_{\sigma\in \mathrm{Ker}(\phi_{d,N})} \exp(-\pi \lVert\sigma+\sigma_0\rVert^2) }{ \sum_{\sigma \in \mathrm{H}^0(\X, \overline{\mathcal{L}}^{\otimes d})} \exp(-\pi \lVert\sigma\rVert^2) }
		-\frac{1}{\#\ho^0(\X_{N},\li^{\otimes d})} \\
		&<& \frac{\exp\left(h^1_{\theta}(\mathrm{Ker}(\phi_{d,N}))\right)-1}{\#\ho^0(\X_{N},\li^{\otimes d})}\\
		&\leq &2 \exp\big( K_0d^{n-1}-\pi e^{2\varepsilon_0 d} \big) \frac{1}{\#\ho^0(\X_{N},\li^{\otimes d})},
	\end{eqnarray*}
	and
	\begin{eqnarray*}
		& &\frac{\sum_{\sigma\in \mathrm{Ker}(\phi_{d,N})} \exp(-\pi \lVert\sigma+\sigma_0\rVert^2) }{ \sum_{\sigma \in \mathrm{H}^0(\X, \overline{\mathcal{L}}^{\otimes d})} \exp(-\pi \lVert\sigma\rVert^2) }
		-\frac{1}{\#\ho^0(\X_{N},\li^{\otimes d})}\\
		&>&\frac{2\cdot\exp\left(-h^1_{\theta}(\X, \Li^{\otimes d} )\right)- \exp\left(h^1_{\theta}(\mathrm{Ker} \phi_{d,N} )\right) -1}{\#\ho^0(\X_{N},\li^{\otimes d})}\\
		&=&\frac{2\left(\exp\left(-h^1_{\theta}(\X, \Li^{\otimes d} )\right)-1\right)- \left(\exp\left(h^1_{\theta}(\mathrm{Ker} \phi_{d,N} )\right) -1\right)}{\#\ho^0(\X_{N},\li^{\otimes d})}\\
		&\geq&-\frac{2\exp(h^1_{\theta}(\X,\Li^{\otimes d}))\cdot 2 \exp\big( K_0d^{n-1}-\pi e^{2\varepsilon_0 d} \big)+2\exp \big(K_0d^{n-1}-\pi e^{2(\varepsilon_0-\delta) d}\big)}{\#\ho^0(\X_{N},\li^{\otimes d})}\\
		&=&-\frac{8\exp\big( K_0d^{n-1}-\pi e^{2\varepsilon_0 d} \big)+2\exp \big(K_0d^{n-1}-\pi e^{2(\varepsilon_0-\delta) d}\big)}{\#\ho^0(\X_{N},\li^{\otimes d})}\\
		&\geq&-10\exp \big(K_0d^{n-1}-\pi e^{2(\varepsilon_0-\delta) d}\big)\frac{1}{\#\ho^0(\X_{N},\li^{\otimes d})}.
	\end{eqnarray*}
	
	In any case, we have
	\begin{eqnarray*}
		& &\left| \frac{\sum_{\sigma\in \mathrm{Ker}(\phi_{d,N})} \exp(-\pi \lVert\sigma+\sigma_0\rVert^2) }{ \sum_{\sigma \in \mathrm{H}^0(\X, \overline{\mathcal{L}}^{\otimes d})} \exp(-\pi \lVert\sigma\rVert^2) }
		-\frac{1}{\#\ho^0(\X_{N},\li^{\otimes d})} \right| \\
		&<& 10\exp \big(K_0d^{n-1}-\pi e^{2(\varepsilon_0-\delta) d}\big)\frac{1}{\#\ho^0(\X_{N},\li^{\otimes d})}.
	\end{eqnarray*}
	Therefore for any subset $E\subset \ho^0(\X_{N},\li^{\otimes d}) $, we have
	\begin{eqnarray*}
		& &\left| \frac{\sum_{\sigma\in \phi_{d,N}^{-1}(E)} \exp(-\pi \lVert\sigma\rVert^2) }{ \sum_{\sigma \in \mathrm{H}^0(X, \overline{\mathcal{L}}^{\otimes d})} \exp(-\pi \lVert\sigma\rVert^2) } -  \frac{\#E}{\#\ho^0(\X_{N},\li^{\otimes d})}\right| \\
		&\leq& \sum_{\gamma\in E}\left| \frac{\sum_{\sigma\in \phi_{d,N}^{-1}(\gamma)} \exp(-\pi \lVert\sigma+\sigma_0\rVert^2) }{ \sum_{\sigma \in \mathrm{H}^0(\X, \overline{\mathcal{L}}^{\otimes d})} \exp(-\pi \lVert\sigma\rVert^2) }
		-\frac{1}{\#\ho^0(\X_{N},\li^{\otimes d})} \right| \\
		&<& 10 \exp \big(K_0d^{n-1}-\pi e^{2(\varepsilon_0-\delta) d}\big)\cdot \frac{\#E}{\#\ho^0(\X_{N},\li^{\otimes d})},
	\end{eqnarray*}
	which finishes the proof.
\end{proof}

\section{Arithmetic Bertini theorem on irreducibility}\label{Irred}

In this section, we prove Theorem \ref{mainirred}. This is the $\theta$-density version of Theorem 1.6 in \cite{Ch21}. Our method follows the proof of Charles.

In the proof, we need to separate the two cases where the dimension of the projective arithmetic variety $\X$ is strictly larger than $2$ and where we have $\dim \X=2$. 

In the first case, we can apply the Bertini theorem on irreducibility over finite fields, proved by Charles and Poonen in \cite{CP16}, as the fibers over prime numbers $p$ of $\X$ are all of dimension larger than or equal to $2$. The dimension $2$ case needs special treatment as the theorem over finite fields cannot be applied. In particular, we need a resolution of singularity, and the arithmetic intersection theory for effective computations.

\subsection{Case of large dimension}
First, we prove Theorem \ref{mainirred} when the projective arithmetic variety $\X$ is of dimension $n>2$.

\begin{lem}\label{ver}
	Let $\X$ be a projective arithmetic variety of dimension $n>2$, and let $\Li$ be an ample Hermitian line bundle on $\X$. Then the set
	\[
		\left\{ \sigma\in \bigcup_{d>0} \mathrm{H}^0(\X, \Li^{\otimes d})\ ; \ \mathrm{div}(\sigma)\  \text{does not have any vertical component} \right\}
	\]
	is of $\theta$-density $1$.
\end{lem}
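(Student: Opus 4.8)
The plan is to show that the complement set---sections whose divisor contains a vertical component---has $\theta$-density $0$, by bounding its contribution fiber by fiber. A vertical component of $\mathrm{div}(\sigma)$ is supported on some $\X_p = \X \times_{\Spec \Z} \Spec \F_p$; since the fibers $\X_p$ are geometrically connected of dimension $n-1 > 1$, having a vertical component over $p$ is equivalent to $\sigma$ restricting to $0$ on the whole fiber $\X_{\F_p}$ (or at least on some positive-dimensional subvariety of it), which is controlled by the restriction morphism $\phi_{d,\X_p}$ of Proposition \ref{fibre}. So first I would reduce to: for each prime $p$, the set of $\sigma \in \ho^0(\X,\Li^{\otimes d})$ with $\mathrm{div}(\sigma) \supset \X_{\F_p}$ (equivalently $\sigma \equiv 0 \bmod p$) contributes a $\theta$-weighted proportion that is summable over $p$ once we sum over all $d$.

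The key estimates are already in hand. For a \emph{fixed} prime $p$, Proposition \ref{fibre} applied to $\Y = \X_p$ (whose generic fiber is empty, but one works directly with the vertical case of that proposition, or equivalently with $\phi_{d,p}$ and Corollary \ref{thetadensity}) gives that the $\theta$-weighted proportion of $\Ker(\phi_{d,p})$ decays like $O(\exp(-\eta d^{\dim \X_p})) = O(\exp(-\eta d^{\,n-1}))$; alternatively, Corollary \ref{thetadensity} bounds it by roughly $2/\#\ho^0(\X_p,\Li^{\otimes d}) = 2 p^{-\mathrm{rk}\,\ho^0(\X,\Li^{\otimes d})}$, with $\mathrm{rk}\,\ho^0(\X,\Li^{\otimes d}) \sim \frac{(\li|_{\X_\Q})^{n-1}}{(n-1)!} d^{n-1}$ growing polynomially in $d$. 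For the primes $p > e^{\delta d}$ one cannot cut down the fiber modulo $p$, but a vertical component over such a large $p$ would force $\sigma \bmod p = 0$, i.e. all coefficients of $\sigma$ (in the chosen small-norm basis) divisible by $p$; a section with a nonzero coefficient of absolute value $< p$ cannot be divisible by $p$, and the $\theta$-weight of sections all of whose coordinates are divisible by a prime $> e^{\delta d}$ is negligible by the same Gaussian-sum comparison used in Proposition \ref{fibre} and Corollary \ref{thetadensity}. So I would split the union over $p$ into $p \leq e^{\delta d}$ and $p > e^{\delta d}$, bound the first range by $\sum_{p \leq e^{\delta d}} 2 p^{-cd^{n-1}}$, which for $n > 2$ is $O(2^{-cd^{n-1}} \cdot e^{\delta d}) \to 0$, and bound the tail $p > e^{\delta d}$ similarly, using that only finitely many primes can divide a fixed nonzero $\sigma$ and that the large-$p$ contributions are dominated.

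Putting this together: the complement set $\bigcup_{d>0}\{\sigma : \mathrm{div}(\sigma)\text{ has a vertical component}\}_d$ has, in degree $d$, $\theta$-weighted proportion bounded by $\sum_p (\text{proportion of }\Ker\phi_{d,p})$, which tends to $0$ as $d \to \infty$; hence its upper $\theta$-density is $0$, and the set in the statement has lower $\theta$-density $1$, so its $\theta$-density is $1$. The main obstacle is the uniformity over $p$: Proposition \ref{fibre} gives a bound whose implied constant is independent of $\Y$, which is exactly what lets the sum over all primes $p \leq e^{\delta d}$ be controlled by a single geometric-type series; I would make sure to invoke that uniformity explicitly and to handle the range $p > e^{\delta d}$ by a direct norm argument rather than by reduction mod $p$, since there the map $\phi_{d,p}$ is no longer useful. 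A minor point to check is that "no vertical component" genuinely coincides, for $\X$ of dimension $> 2$ with geometrically integral fibers, with "$\sigma$ is not divisible by any prime," so that the complement is exactly $\bigcup_p \Ker(\phi_{d,p}) \setminus \{0\}$.
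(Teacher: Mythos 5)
There are two genuine gaps, one in the reduction and one in the large-prime range; the paper's own proof uses a fundamentally different mechanism to close the latter.

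First, the claimed equivalence between ``$\mathrm{div}(\sigma)$ has a vertical component over $p$'' and ``$\sigma$ restricts to $0$ on all of $\X_p$'' fails: $\X$ is not assumed to have geometrically irreducible fibers, so $\X_p$ may have several irreducible components, and a vertical component of $\mathrm{div}(\sigma)$ (which is a single full-dimensional irreducible component of some $\X_p$) allows $\sigma$ to be nonzero on the remaining components. Hence the set $\bigcup_p \Ker\phi_{d,p}$ you propose to bound is \emph{strictly contained} in the complement you actually need to control; you would have to work with $\Ker\phi_{d,\Y}$ for $\Y$ ranging over irreducible vertical divisors, exactly as Proposition \ref{fibre} is formulated. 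This is not the ``minor point'' you flag at the end --- it is a hypothesis the lemma does not grant.

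Second, and more seriously, the tail $p$ large is not actually closed by a norm argument of the kind you sketch. Even granting $\sigma \in p\,\ho^0(\X,\Li^{\otimes d})$, the minimal nonzero norm of the lattice $\ho^0(\X,\Li^{\otimes d})$ can be as small as $e^{-\varepsilon_0 d}$ by Proposition \ref{epsilon}, so $\lVert\sigma\rVert \geq p\,\lambda_1 \geq e^{(\delta-\varepsilon_0)d}$, which is small when $\delta<\varepsilon_0$ (the very range in which Corollary \ref{thetadensity} applies); and the statement ``only finitely many primes divide a fixed $\sigma$'' prevents double-counting but gives no bound on the $\theta$-weighted sum. The paper resolves this by a genuinely different device: fix an arithmetic curve $C\subset\X$ and use the height of the $0$-cycle $\mathrm{div}(\sigma)\cdot C$, which satisfies
\[
h_{\Li}(\mathrm{div}(\sigma)\cdot C) = d\cdot h_{\Li}(C) + [K_C:\Q]\log\lVert\sigma\rVert .
\]
If $\mathrm{div}(\sigma)$ has a vertical component in $\X_p$ and $C\not\subset\mathrm{div}(\sigma)$, then $\mathrm{div}(\sigma)\cdot C$ picks up a closed point of residue characteristic $p$, so its height is at least $\log p$; for $p\geq e^{\varepsilon d^2}$ this forces $\lVert\sigma\rVert$ to be exponentially large, and a Gaussian tail estimate (Lemme 3.2.1 of \cite{Bo15}) kills the contribution. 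The complementary range $p<e^{\varepsilon d^2}$ is then handled by Proposition \ref{fibre}, summed over the $O(e^{\varepsilon d^2})$ irreducible vertical divisors, which is where $n>2$ (hence $\dim\Y = n-1\geq 2$) is used to make $e^{\varepsilon d^2}e^{-\eta d^{n-1}}=o(1)$. This height-of-intersection mechanism is the missing ingredient: it converts ``vertical over a large prime'' into a norm lower bound without ever reducing $\sigma$ modulo $p$.
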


\begin{proof}
	Write
	\[
		\mathcal{R}_d^{ver}=\{ \sigma\in \mathrm{H}^0(\X, \Li^{\otimes d}) \ ;\ \mathrm{div}(\sigma) \text{ has a vertical component} \}.
	\]
	It suffices to show that the set $\mathcal{R}^{ver}:=\bigcup_{d>0}\mathcal{R}_d^{ver}$ is of $\theta$-density $0$. Choose a constant $\varepsilon>0$ sufficiently small and an arithmetic curve $C$ in $\X$. 
	Set
	\begin{eqnarray*}
		\mathcal{R}_d^{ver,1} &=& \{ \sigma\in \mathrm{H}^0(\X, \Li^{\otimes d}) \ ;\ \mathrm{div}(\sigma) \text{ has a vertical component in } \X_p \text{ with } p<\exp(\varepsilon d^2) \} \\
		\mathcal{R}_d^{ver,2} &=& \{ \sigma\in \mathrm{H}^0(\X, \Li^{\otimes d}) \ ;\ \mathrm{div}(\sigma) \supset C \}  \\
		\mathcal{R}_d^{ver,3} &=& \{ \sigma\in \mathrm{H}^0(\X, \Li^{\otimes d}) \ ;\ h_{\overline{\mathcal L}}(\mathrm{div}(\sigma) . C)\geq \varepsilon d^2 \}.
	\end{eqnarray*}
	Here $h_{\Li}$ is the height function on $\X$ defined by the ample Hermitian line bundle $\Li$. In particular, if $\mathrm{div}(\sigma). C=\sum_{i}n_i x_i$ is a cycle of dimension $0$, then
	\[
		h_{\Li}(\mathrm{div}(\sigma) . C)=\sum_{i}n_i \log (\#\kappa(x_i)).
	\]
	If $\sigma\in \mathrm{H}^0(\X, \Li^{\otimes d})$ is such that $\mathrm{div}(\sigma)$ has a vertical component in $\X_p$ with $p\geq \exp(\varepsilon d^2)$ and that $C$ is not contained in $\mathrm{div}(\sigma)$, then $\mathrm{div}(\sigma) \cap C$ contains at least a closed point of $\X_p$, hence
	\[
		h_{\Li}(\mathrm{div}(\sigma) . C)\geq \log p\geq \varepsilon d^2.
	\]
	Therefore we have
	\[
		\mathcal{R}_d^{ver}\subset \mathcal{R}_d^{ver,1}\cup \mathcal{R}_d^{ver,2}\cup \mathcal{R}_d^{ver,3}.
	\]
	\medskip
	
	Note that by Proposition \ref{fibre}, we may choose $d_0$ and a constant $\eta>0$ so that when $d\geq d_0$, for any prime number $p$,  
	\[
		\frac{\sum_{\sigma \in \Ker \phi_{d,p}} (e^{-\pi ||\sigma||^2})}{\sum_{\sigma\in \mathrm{H}^0(\X, \Li^{\otimes d}) } \exp{(-\pi||\sigma||^{2})}}=O\left(\exp{(-\eta d^{n-1})}\right).
	\]
	So the sections in $\mathcal{R}_d^{ver,1}$ can be controled by 
	\begin{eqnarray*}
		& &\frac{\sum_{\sigma \in \mathcal{R}_d^{ver,1}} (e^{-\pi ||\sigma||^2})}{\sum_{\sigma\in \mathrm{H}^0(\X, \Li^{\otimes d}) } \exp{(-\pi||\sigma||^{2})}}\\
		&\leq&\sum_{p<\exp(\varepsilon d^2)}\frac{\sum_{\sigma \in \Ker \phi_{d,p}} (e^{-\pi ||\sigma||^2})}{\sum_{\sigma\in \mathrm{H}^0(\X, \Li^{\otimes d}) } \exp{(-\pi||\sigma||^{2})}}\\
		&=&O\left( \exp(\varepsilon d^2-\eta d^{n-1}) \right) =o(1).
	\end{eqnarray*}
	Moreover, $\mathcal{R}_d^{ver,2}$ is exactly the set of sections in $\mathrm{H}^0(\X, \Li^{\otimes d})$ which vanishes identically on $C$, i.e. $\mathcal{R}_d^{ver,2}=\mathrm{Ker}(\phi_{d, C})$, where $\phi_{d, C}$ is the restriction to $C$ morphism. Again by Proposition \ref{fibre} we have
	\begin{eqnarray*}
			\frac{\sum_{\sigma\in \mathcal{R}_d^{ver,2}} \exp{(-\pi||\sigma||^{2})}}{\sum_{\sigma\in \mathrm{H}^0(\X, \Li^{\otimes d}) } \exp{(-\pi||\sigma||^{2})}} = O\left(\exp(-\eta \cdot d )\right)=o(1).
	\end{eqnarray*}
	So it remains to controle sections in $\mathcal{R}_d^{ver,3}$.

	Note that the height of $\mathrm{div}(\sigma).C$ can be calculated via the formula
	\[
		h_{\Li}(\mathrm{div}(\sigma) \cdot C)=d\cdot h_{\overline{\mathcal L}}(C)+ \deg [K_C:\mathbb{Q}]\cdot \log ||\sigma||,
	\]
	where we denote by $K_C$ the field of definition of the arithmetic curve $C$. So if $\sigma \in \mathcal{R}_d^{ver,3}$, we have
	\[
		||\sigma||\geq \exp\left( \frac{\varepsilon d^2- d\cdot h_{\Li}(C)}{\deg [K_C:\mathbb{Q}]} \right)
	\]
	By Lemme 3.2.1 in \cite{Bo20}, for any $t\in (0,1]$,
	\begin{eqnarray*}
		& & \left(\sum_{||\sigma|| \geq \exp\left( \frac{\varepsilon d^2- d\cdot h_{\Li}(C)}{\deg [K_C:\mathbb{Q}]} \right)}e^{-\pi ||\sigma||^2}\right) \cdot \exp\left(-\mathrm{h}^0_{\theta}(\X, \Li^{\otimes d})\right) \\
		&\leq& t^{-\mathrm{h}^0(\X_{\mathbb{Q}} , \Li^{\otimes d} )/2}\cdot e^{ -\pi(1-t)\exp\left( \frac{2\varepsilon d^2- 2d\cdot h_{\Li}(C)}{\deg [K_C:\mathbb{Q}]} \right) } \\
		&=& o(1).
	\end{eqnarray*}
	By putting these three estimates together, we get the result.
\end{proof}
	
\begin{proof}[Proof of Theorem \ref{mainirred} (case when $dim \X>2$)]
	Let $p$ be a sufficiently large prime number such that the fiber $\X_p$ is reduced. The specialisation gives a bijection between the set of irreducible components of $\X_{\overline{\mathbb Q}}$ and the set of irreducible components of $\X_{\overline{\mathbb F}_p}$. Let $\X_{0,p}$ be an irreducible component of $\X_p$ equipped with the induced reduced structure. Take a section $\sigma\in \mathrm{H}^0(\X, \Li^{\otimes d})$. If $D$ is a horizontal irreducible component of $\mathrm{div}(\sigma)$, then $D$ intersects every irreducible component of $\X_{\overline{\mathbb Q}}$, and hence intersects $\X_{0,p}$. So if $\mathrm{div}(\sigma|_{\X_{0,p}})$ is irreducible and that $\mathrm{div}(\sigma)$ has no vertical component, then $\mathrm{div}(\sigma)$ has only one horizontal irreducible component, which suggests that it is irreducible. 
	
	By Theorem 1.6 of \cite{CP16}, the set of sections $\sigma_p\in \bigcup_{d>0}\mathrm{H}^0(\X_{0,p}, \li^{\otimes d})$ such that $\mathrm{div}(\sigma_p)$ is irreducible is of density $1$. Applying Corollary \ref{cor}, we get that the set of sections $\sigma\in \bigcup_{d>0}\mathrm{H}^0(\X,\Li^{\otimes d})$ such that $\mathrm{div}(\sigma|_{\X_{0,p}})$ is irreducible is of $\theta$-density $1$.
		
	On the other hand, Lemma \ref{ver} tells us that the set of sections $\sigma\in \bigcup_{d>0}\mathrm{H}^0(\X,\Li^{\otimes d})$ such that $\mathrm{div}(\sigma)$ does not have a vertical component is of $\theta$-density $1$. We conclude by combining these two result.
\end{proof}

\subsection{Case of arithmetic surface}
In this part we follow the proof of Charles in Section 5 of \cite{Ch21}.  
	
We choose at first real numbers $0<\beta<\alpha<\frac{1}{2}$. For any $d>0$, let $Y_d\subset \mathrm{H}^0(\X, \Li^{\otimes d})$ be the subset of sections $\sigma$ such that :
\begin{enumerate}
	\item $\sigma$ does not vanish on a Weil divisor $D$ of $\X$ with $h_{\Li}(D)\leq d^{\alpha}$;\\
	\item there exists an irreducible component $D'$ of $\mathrm{div}(\sigma)$ with
	\[
		\deg (D'_{\mathbb Q})\geq (\deg \Li_{\mathbb Q})\cdot d- rd^{\beta}.
	\]
\end{enumerate}
We first show that it suffices to consider sections in $Y:=\bigcup_{d>0}Y_d$.
\begin{prop}
	The subset $Y\subset \bigcup_{d>0}\mathrm{H}^0(\X, \Li)^{\otimes d}$ is of $\theta$-density $1$.
\end{prop}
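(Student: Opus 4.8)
The plan is to show that the complement $\bigcup_{d>0}\big(\mathrm{H}^0(\X,\Li^{\otimes d})\setminus Y_d\big)$ is of $\theta$-density $0$. A section $\sigma\in\mathrm{H}^0(\X,\Li^{\otimes d})$ fails to lie in $Y_d$ exactly when it violates condition (1) or condition (2), so it is enough to treat separately
\[
	A_d=\big\{\sigma\ ;\ \sigma\text{ vanishes on a Weil divisor }D\text{ of }\X\text{ with }h_{\Li}(D)\le d^{\alpha}\big\}
\]
and
\[
	B_d=\big\{\sigma\ ;\ \text{every irreducible component }D'\text{ of }\mathrm{div}(\sigma)\text{ satisfies }\deg(D'_{\Q})<(\deg\Li_{\Q})d-rd^{\beta}\big\},
\]
and to prove that $\bigcup_{d>0}A_d$ and $\bigcup_{d>0}B_d$ are both of $\theta$-density $0$.

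For $A_d$ the key tool is Proposition \ref{fibre}. If $\sigma$ vanishes on an irreducible Weil divisor $D$, then $\sigma\in\mathrm{Ker}(\phi_{d,D})$ for $D$ with its reduced structure (whose generic fibre is reduced), so that set of sections has small relative $\theta$-mass. Running the proof of Proposition \ref{fibre} while keeping track of the corank of $\phi_{d,D}$ — which for a horizontal $D$ is of order $\deg(D_{\Q})$ — gives a bound of the shape $O\big(\exp(-\eta\,\deg(D_{\Q})\,d)\big)$, uniformly in $D$. One then sums over the relevant divisors: a horizontal $D$ on which some nonzero section of $\Li^{\otimes d}$ vanishes has $\deg(D_{\Q})\le(\deg\Li_{\Q})d$, so grouping by $\delta=\deg(D_{\Q})$ and using the quantitative Northcott bound for the number of closed points of $\X_{\Q}$ of degree $\delta$ and height at most $d^{\alpha}$, which is $\exp(O(\delta\log d))$, the total horizontal contribution is at most $\sum_{\delta\ge1}\exp\big(O(\delta\log d)-\eta\delta d\big)=o(1)$; a vertical $D$ with $h_{\Li}(D)\le d^{\alpha}$ lies over a prime $p\le\exp(O(d^{\alpha}))$, so by the vertical case of Proposition \ref{fibre} together with $\alpha<1$ the vertical contribution is $o(1)$ as well. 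Hence $\bigcup_{d>0}A_d$ is of $\theta$-density $0$.

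For $B_d$ I would follow Section 5 of \cite{Ch17}: a section violating (2) factors into two \emph{medium-degree} pieces. If $\sigma\in B_d$, then $\mathrm{div}(\sigma_{\overline{\Q}})$ is an effective divisor of degree $(\deg\Li_{\Q})d$ on the curve $\X_{\overline{\Q}}$ with no irreducible part of degree $\ge(\deg\Li_{\Q})d-rd^{\beta}$, so one can split it (respecting Galois orbits, and splitting multiplicities where necessary) into a Galois-stable sum $\mathrm{div}(\sigma_{\Q})=E_1+E_2$ with $\deg E_1,\deg E_2$ both in $[rd^{\beta},(\deg\Li_{\Q})d-rd^{\beta}]$, whence $\sigma_{\Q}=s_1\cdot s_2$ with $s_i\in\mathrm{H}^0(\X_{\Q},\Oc(E_i))$. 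Pulling back along $\pi\colon\widetilde\X\to\X$ and using that $\mathrm{Pic}(\widetilde\X)$ is finitely generated (Mordell–Weil for the Jacobian of the generic fibre, plus the fibral components), the classes of the $\Oc(E_i)$ range over a set of cardinality polynomial in $d$; lifting them to $N$ and correcting by suitable powers of the ample $\overline{\mathcal A}$ and the effective $\overline{\mathcal E}$ from $\overline{\mathcal B}^{k}\simeq\overline{\mathcal A}\otimes\overline{\mathcal E}$ produces, for each factorization type, a pair $\overline{\mathcal M}_1,\overline{\mathcal M}_2$ in a finite, polynomial-size subset of $N$, with $\overline{\mathcal M}_1\otimes\overline{\mathcal M}_2$ differing from $\pi^*\Li^{\otimes d}$ only in the one-dimensional $\R$-direction of $\widehat{\mathrm{Pic}}_{\omega}(\widetilde\X)$, and such that $\pi^*\sigma$ lies in the image of the multiplication map $\mathrm{H}^0(\widetilde\X,\overline{\mathcal M}_1)\otimes\mathrm{H}^0(\widetilde\X,\overline{\mathcal M}_2)\to\mathrm{H}^0(\widetilde\X,\pi^*\Li^{\otimes d})$. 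One then bounds $\sum_{\sigma\in B_d}\exp(-\pi\lVert\sigma\rVert^2)$ by a sum over the polynomially many factorization types of the $\theta$-mass of each such image, and compares with $\exp\big(h^0_{\theta}(\X,\Li^{\otimes d})\big)$ via Hilbert–Samuel estimates for $\theta$-invariants: since $\widehat{\deg}\,\overline{\mathcal M}_i$ are both "medium", strict concavity of the arithmetic volume forces $h^0_{\theta}(\widetilde\X,\overline{\mathcal M}_1)+h^0_{\theta}(\widetilde\X,\overline{\mathcal M}_2)$ to fall below $h^0_{\theta}(\X,\Li^{\otimes d})$ by at least $c\,d^{\,1+\beta}$ for some $c>0$, which dominates the logarithm of the number of types, so $\bigcup_{d>0}B_d$ is of $\theta$-density $0$.

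The delicate step, and the main obstacle, is this last estimate for $B_d$. In the Arakelov setting of effective sections one exploits the submultiplicativity $\lVert s_1\otimes s_2\rVert\le\lVert s_1\rVert\,\lVert s_2\rVert$ of the sup-norm to dominate the count of factorizations; here that inequality points the wrong way for bounding the $\theta$-mass of the image of the multiplication map — equivalently, for lower-bounding the covolumes of the sublattices $s_1\otimes\mathrm{H}^0(\widetilde\X,\overline{\mathcal M}_2)$ — so these covolumes must be controlled directly, and this is precisely what the auxiliary data introduced before the statement are for: the resolution $\widetilde\X$ makes intersection theory available, the finite-type lattice $N$ of $\omega$-admissible metrized line bundles makes the metrics compatible under tensor product and keeps the number of factorization types polynomial, and the splitting $\overline{\mathcal B}^{k}\simeq\overline{\mathcal A}\otimes\overline{\mathcal E}$ lets one replace a priori badly positive $\overline{\mathcal M}_i$ by ample ones at negligible cost. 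Moreover the concavity gap $c\,d^{\,1+\beta}$ has lower order than the $o(d^{n})$ error of the plain asymptotic formula (here $n=2$), so the argument must be run with effective arithmetic intersection estimates — with error $O(d)$ — rather than with Theorem \ref{hilbsamutheta} as stated; this is the "arithmetic intersection theory for effective computations" alluded to in the introduction of this section.
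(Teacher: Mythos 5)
Your decomposition into $A_d$ (violating condition (1)) and $B_d$ (violating condition (2)) is the right starting point, but both halves diverge from the paper, and the second half has a real gap.

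For $A_d$ the paper's route is simpler than yours: it invokes Theorem B of \cite{Mo04} to bound the number of Weil divisors $D$ with $h_{\Li}(D)\leq d^{\alpha}$ by $e^{Cd^{2\alpha}}$, and applies Proposition \ref{fibre} as stated to get a decay $C'\exp(-\eta d)$ per divisor with $\eta$ independent of $D$ (for an arithmetic surface, $\dim D=1$, so the exponent is $d^{\dim D}=d$). Since $2\alpha<1$, the product $e^{Cd^{2\alpha}-\eta d}\to 0$. You instead propose to re-run the proof of Proposition \ref{fibre} to extract a $\deg(D_{\Q})$-dependent decay rate and then sum over degrees via a Northcott-type count. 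This refinement is plausible and would also close the estimate, but it is unnecessary: the crude $\exp(-\eta d)$ already dominates Moriwaki's $e^{Cd^{2\alpha}}$ because $\alpha<\frac12$, and the uniformity of $\eta$ in $D$ is exactly what Proposition \ref{fibre} delivers. You are doing extra, unjustified work (the claimed Northcott count $\exp(O(\delta\log d))$ is not established) where the cited counting theorem suffices.

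For $B_d$ you have conflated two different steps of the paper's argument, and the technique you propose is the one the paper reserves for a \emph{later} subset, namely $Z\subset Y$ (sections in $Y$ whose divisor is reducible). The paper's proof that $Y$ has $\theta$-density $1$ treats condition (2) by passing to finite fibers: take $N_d=\prod_{i\le t_d}p_i$ with $t_d=\lceil d^\tau\rceil$ and $1-\beta<\tau<1$, invoke the proof of Proposition 5.6 of \cite{Ch17} (a Bertini-over-finite-fields argument) to show that the proportion of sections in $\ho^0(\X_{N_d},\Li^{\otimes d})$ whose reduction mod $p_i$ lies in none of the good sets $E_{p_i}$ is at most $(1-Bd^{\beta-1})^{t_d}=o(1)$, and then transfer this back to the $\theta$-density using Corollary \ref{thetadensity} together with the $h^1_\theta$ bound from Proposition 2.6.2 of \cite{Bo15}. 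No characteristic-zero factorization, no resolution $\widetilde\X$, no auxiliary group $N$, no bundles $\overline{\mathcal A},\overline{\mathcal E}$ enter at this stage. Your proposal, by contrast, factors $\sigma_{\Q}=s_1\cdot s_2$ with both pieces of medium degree, bounds the number of factorization types polynomially, and then wants a concavity deficit of order $d^{1+\beta}$ to beat it. As you yourself note, the sup-norm submultiplicativity $\lVert s_1s_2\rVert\leq\lVert s_1\rVert\lVert s_2\rVert$ points the wrong way for bounding the $\theta$-mass of the image of a multiplication map, and what would be needed is an analogue of Lemma \ref{decomp} giving a controlled lower bound $\lVert\sigma\rVert\geq C^{-rd^{\beta}}\lVert\sigma_1\rVert\lVert\sigma_2\rVert$ for the $B_d$ regime (both factors medium). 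You flag this as "the delicate step" but do not supply it, and it is not immediate that the paper's Lemma \ref{decomp}, which is calibrated to the $Z_d$ situation of one small factor and one large factor, adapts. That unproven lower bound is the genuine gap; without it, your bound on $\sum_{\sigma\in B_d}e^{-\pi\lVert\sigma\rVert^2}$ does not close. The finite-field route avoids the issue entirely and is the intended argument.
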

\begin{proof}
	Assume that $d$ is large enough. By Theorem B of \cite{Mo04}, there exists a constant $C$ such that the number of Weil divisors $D$ with $h_{\Li}(D)\leq d^{\alpha}$ is bounded above by $e^{Cd^{2\alpha}}$. By Proposition \ref{fibre}, there exists constants $C',\eta$ such that for any divisor $D$ as above, we have 
	\[
		\frac{\sum_{\sigma\in \mathrm{Ker}(\phi_{d, D}) } \exp{(-\pi||\sigma||^{2})}}{\sum_{\sigma\in \mathrm{H}^0(\X, \Li^{\otimes d}) } \exp{(-\pi||\sigma||^{2})}}
		\leq C'\exp{(-\eta d)}.
	\]
	Therefore the $\theta$-proportion of sections vanishing on a Weil divisor of height smaller than or equal to $d^{\alpha}$ is bounded above by $C'\exp(Cd^{2\alpha}-\eta d) $, which tends to $0$ when $d$ tends to infinity.

	Let $\tau$ be a real number with $1-\beta< \tau<1$, $t_d=\lceil d^{\tau} \rceil$, and let $\mathcal{S}$ be a non-empty open subscheme of $\mathrm{Spec}\  \mathbb{Z}$ such that the restriction $\X_{\mathcal{S}}\rightarrow \mathcal{S}$ has reduced fibers. Let $p_1, \cdots, p_{t_d}$ be the first $t_d$ smallest prime numbers in $\mathcal{S}$ and set $N_d=\prod_{i=1}^{t_d}p_i$. Then when $d$ is sufficiently large, by Lemma \ref{freemod} we have
	\[
		\mathrm{H}^0(\X_{N_d},\li^{\otimes d})=\mathrm{H}^0(\X,\li^{\otimes d})/\big(N_d\cdot \mathrm{H}^0(\X,\li^{\otimes d})\big);
	\]
	Moreover, the proof of Proposition 5.6 in \cite{Ch21} gives us the following two conclusions.
	\begin{itemize}
		\item Denote by $E_p$ the subset of sections $s\in \mathrm{H}^0(\X_p,\li^{\otimes d})$ such that there exists an irreducible component $C$ of $\X_p$ having $r_C$ geometric irreducible components, that $s|_{C}\not=0$ and that $s$ vanishes on an irreducible divisor $D_p$ of $\X_p$ of degree at least $r_C(2d-d^{\beta})$. There exists positive real numbers $A,B$ only depending on $\beta$ and $\X_{\mathcal{S}}\rightarrow \mathcal{S}$ such that if $d>A$, the proportion of sections $\sigma\in \mathrm{H}^0(\X_{N_d}, \li^{\otimes d})$ which are not projected on any of the $E_{p_i}$ for $1\leq i\leq t_d$ is bounded above by
		\[
		 	(1-Bd^{\beta-1})^{t_d}\leq (1-Bd^{\beta-1})^{d^{\tau}+1}=\exp(-Bd^{\beta+\tau+1}+o(d^{\beta+\tau+1}))=o(1).
		\]
		Therefore the proportion of $\sigma\in \mathrm{H}^0(\X_{N_d},\li^{\otimes d})$ which are not projected to at least one of the $E_{p_i}$ with $1\leq i\leq t_d$ tends to $1$ when $d\rightarrow \infty$.
		\item If $\sigma\in \mathrm{H}^0(\X,\Li^{\otimes d})$ is such that $\sigma_p\in E_p$ for some $p\in \mathcal{S}$, (we denote by $D_p$ the corresponding irreducible divisor), then there exists an irreducible component $D$ of $\mathrm{div}(\sigma)$ with $D\cap \X_p=D_p$ and
		\[
			\deg (D_{\mathbb Q}) \geq (\deg \Li_{\mathbb Q})\cdot d- rd^{\beta}.
		\]
		In other words, if $\sigma$ is contained in $Y_d$, then for some $p\in \mathcal{S}$ it should be projected to $E_p$.
	\end{itemize}
	To finish the proof, we apply Corollary \ref{thetadensity}. The $\theta$-proportion of $\sigma\in \mathrm{H}^0(\X, \Li^{\otimes d})$ which are not projected to any one of the $E_{p_i}$ for $1\leq i\leq t_d$ is bounded above by
	\begin{eqnarray*}
		& &\exp(-Bd^{\beta+\tau+1}+o(d^{\beta+\tau+1}))\cdot \exp(h_{\theta}^1(\mathrm{Ker}(\phi_{d, N_d}))) \\
		&=& \exp(h_{\theta}^1(\mathrm{Ker}(\phi_{d, N_d}))-Bd^{\beta+\tau+1}+o(d^{\beta+\tau+1})).
	\end{eqnarray*}
	Moreover, Proposition 2.6.2 in \cite{Bo20} shows that as $\mathrm{Ker}(\phi_{d, N_d})^{\vee}$ has a basis of norm at least $\frac{1}{N_d}e^{\varepsilon_0 d}$,
	\[
		h_{\theta}^1(\mathrm{Ker}(\phi_{d, N_d}))<2\cdot 3^{h^0(\X_{\mathbb Q}, \Li_{\mathbb Q}^{\otimes d})}\exp(-\frac{\pi}{N_d^2}e^{2\varepsilon_0 d}).
	\]
	Since $N_d=\prod_{i=1}^{t_d}p_i$ and that $p_i\sim i\log i$, we have 
	\[
		N_d=O\left( (t_d\log t_d)^{t_d} \right)=O\left( (d^{\tau}\tau\log d)^{d^{\tau}} \right)=O\left( d^{d^{\tau}} \right)=O\left( e^{d^{\tau}\log d} \right).
	\]
	as $\tau<1$. Since $\X_{\mathbb Q}$ is of dimension $1$, we have $h^0(\X_{\mathbb Q}, \Li_{\mathbb Q}^{\otimes d})=O(d)$. We may choose a constant $C_1>0$ so that $h^0(\X_{\mathbb Q}, \Li_{\mathbb Q}^{\otimes d})\leq C_1 d$ and $N_d\leq C_1e^{d^{\tau}\log d}$. Then we may have  
	\begin{eqnarray*}
		h_{\theta}^1(\mathrm{Ker}(\phi_{d, N_d}))\leq 2\exp\big( C_1d\log 3-\frac{\pi}{C_1^2} e^{2\varepsilon_0 d-2d^{\tau}\log d}\big)
	\end{eqnarray*}
	Therefore we obtain 
	\begin{eqnarray*}
		& & \exp(h_{\theta}^1(\mathrm{Ker}(\phi_{d, N_d}))-Bd^{\beta+\tau+1}+o(d^{\beta+\tau+1})) \\
		&<& \exp(2\cdot 3^{h^0(\X_{\mathbb Q}, \Li_{\mathbb Q}^{\otimes d})}\exp(-\frac{\pi}{N_d^2}e^{2\varepsilon_0 d})-Bd^{\beta+\tau+1}+o(d^{\beta+\tau+1})) \\
		&<& \exp\left(2\exp\big( C_1d\log 3-\frac{\pi}{C_1^2} e^{2\varepsilon_0 d-2d^{\tau}\log d}\big)-Bd^{\beta+\tau+1}+o(d^{\beta+\tau+1})\right) \\
		&=& o(1).
	\end{eqnarray*}
	So the $\theta$-density of sections $\sigma\in \mathrm{H}^0(\X, \Li^{\otimes d})$ which are not sent to any of the $E_{p_i}$ for $1\leq i\leq t_d$ is equal to $0$. This concludes our proof.
\end{proof}
	
Denote by $Z_d\subset Y_d$ the subset of sections $\sigma$ such that $\mathrm{div}(\sigma)$ is irreducible. To prove Theorem \ref{mainirred}, it suffices to show that $Z=\bigcup_{d>0}Z_d$ is of $\theta$-density $0$.

Let $\pi : \widetilde{\X}\longrightarrow \X$ be a resolution of singularity of $\X$, $r$ be the number of irreducible components of $\widetilde{\X}$. Set $\overline{\mathcal B}=\pi^*\Li$. Let $\widehat{\mathrm Pic}_{\omega}(\widetilde \X)$ be the group of isomorphism classes of $\omega$-admissible Hermitian line bundles on $\widetilde \X$ with $\omega$ the first Chern class of $\overline{\mathcal B}$. So we have the short exact sequence
\[
	0\longrightarrow \mathbb{R} \longrightarrow \widehat{\mathrm Pic}_{\omega}(\widetilde \X) \longrightarrow {\mathrm Pic}(\widetilde \X) \longrightarrow 0.
\]

We fix a subgroup $N$ of $\widehat{\mathrm Pic}_{\omega}(\widetilde \X)$ satisfying the following conditions :
\begin{enumerate}[$\bullet$]
	\item $N$ is a group of finite type ;
	\item $N$ contains $\overline{\mathcal B}$ ;
	\item the induced morphism $N\longrightarrow\mathrm{Pic}(\widetilde{\X})$ is surjective ;
	\item $N\cap \mathrm{Ker}(\widehat{\mathrm Pic}_{\omega}(\widetilde \X) \longrightarrow {\mathrm Pic}(\widetilde \X))$ is of rank $1$.
\end{enumerate}
By Lemma 5.7 in \cite{Ch21}, we can find a positive integer $k$ and a decompostion $\overline{\mathcal B}^{k}\simeq \overline{\mathcal A}\otimes \overline{\mathcal E}$ to an ample Hermitian line bundle $\overline{\mathcal A}$ and an effective line bundle $\overline{\mathcal E}$ (i.e. containing a global section of norm $< 1$).

Choose real numbers $\delta, \gamma$ such that $0<\beta<\gamma<\delta<\alpha< \frac{1}{2}$. The following lemma is a variant of Lemma 5.9 in \cite{Ch21}. 
\begin{lem}\label{decomp}
	When $d\gg 1$, for any $\sigma\in Z_d$, we can associate with it two Hermitian line bundles $\overline{\mathcal L}_1, \overline{\mathcal L}_2$ on $\widetilde{\X}$ and two global sections $\sigma_1\in \mathrm{H}^0(\widetilde{\X}, \Li_1),\ \sigma_2\in \mathrm{H}^0(\widetilde{\X}, \Li_2)$ satisfying the following conditions :
	\begin{enumerate}[(i)]
		\item $\Li_1, \Li_2 \in N$ ;
		\item $d^{\delta}\leq \Li_1. \overline{\mathcal B}\leq d \overline{\mathcal B}. \overline{\mathcal B}-d^{\delta}$ ; 	
		\item $\Li_1 . \overline{\mathcal A}\leq d \overline{\mathcal B}. \overline{\mathcal B}$ ;
		\item $\Li_1\otimes \Li_2\simeq \overline{\mathcal B}^{d}$ ;
		\item $\pi^{*}(\sigma)=\sigma_1\cdot \sigma_2$ up to an isomorphism ;
		\item there exist constants $C_1,C_2,C_3$ such that 
		\begin{enumerate}[$\bullet$]
			\item $C_1^{-1}< ||\sigma_2||_0 < C_1$,
			\item $||\sigma_1||\cdot||\sigma_2||_0<(dC_2\deg(\mathcal{L}_{\mathbb Q}))^{rd^{\beta}}||\sigma||$, 
			\item $||\sigma_2||<C_3^{rd^{\beta}}||\sigma_2||_0$, $C_3>1$,	
			where $||\cdot ||_0$ is the norm of sections defined by 
			\begin{eqnarray*}
				||\sigma ||_0=\exp\Big( \int_{\widetilde{\X}(\bbC)}\log\lVert\sigma\rVert^2\omega \Big)
			\end{eqnarray*}
			for any global section $\sigma$ of a Hermitian line bundle $\Li$.
		\end{enumerate}
	\end{enumerate}
\end{lem}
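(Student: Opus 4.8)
### Proof Proposal for Lemma \ref{decomp}

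The plan is to adapt the decomposition argument of Charles (Lemma 5.8 in \cite{Ch17}), taking care to replace the $L^2$-norm bounds that are automatic in the effective-section setting by bounds that work with the $\theta$-weighting, i.e. for arbitrary sections rather than only those of norm $<1$. Fix $\sigma\in Z_d$. By definition of $Z_d\subset Y_d$, the divisor $\mathrm{div}(\sigma)$ is irreducible on $\X$ and contains an irreducible component $D'$ with $\deg(D'_{\Q})\geq (\deg\Li_{\Q})d-rd^{\beta}$; pulling back via the resolution $\pi:\widetilde\X\to\X$, the divisor $\pi^{*}\mathrm{div}(\sigma)$ decomposes into the strict transform of $D'$ (call its associated effective divisor $\widetilde D'$) together with a "small" remainder $R$ supported on exceptional fibers and on the at most $rd^{\beta}$ extra degrees. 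I would set $\mathcal L_1=\Oc_{\widetilde\X}(R)$ with a metric chosen so that $\mathcal L_1\in N$ — this is possible because $N\to\mathrm{Pic}(\widetilde\X)$ is surjective and $N\cap\mathbb R$ has rank $1$, so any line bundle has a representative in $N$, unique up to scaling the metric by a real number, and I fix the metric by a normalization of $\sigma_1$ or $\sigma_2$ below — and $\mathcal L_2=\overline{\mathcal B}^{d}\otimes\mathcal L_1^{-1}$, which is forced by (iv). Then $\sigma_1$ is the canonical section cutting out $R$ and $\sigma_2$ the one cutting out $\widetilde D'$, so that $\pi^{*}\sigma=\sigma_1\cdot\sigma_2$ up to the identification of line bundles, giving (v).

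For the numerical conditions (ii)–(iii): intersecting $\pi^{*}\mathrm{div}(\sigma)=R+\widetilde D'$ with $\overline{\mathcal B}$ and using that $\overline{\mathcal B}=\pi^{*}\Li$ so $\pi^{*}\mathrm{div}(\sigma).\overline{\mathcal B}=d\,\overline{\mathcal B}.\overline{\mathcal B}$, the degree bound $\deg(D'_{\Q})\geq (\deg\Li_{\Q})d-rd^{\beta}$ bounds $\mathcal L_1.\overline{\mathcal B}$ from above by roughly $rd^{\beta}$-worth of intersection, which is $\leq d\,\overline{\mathcal B}.\overline{\mathcal B}-d^{\delta}$ once $d\gg1$ since $\beta<\delta$; the lower bound $d^{\delta}\leq\mathcal L_1.\overline{\mathcal B}$ is arranged exactly as in \cite{Ch17} by transferring a bounded amount of the component $\widetilde D'$ into $R$ if necessary (one absorbs into $\mathcal L_1$ enough of $\widetilde D'$, of total $\overline{\mathcal B}$-degree between $d^{\delta}$ and $d^{\delta}+O(1)$, to force the inequality while keeping the upper bounds). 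Condition (iii), $\mathcal L_1.\overline{\mathcal A}\leq d\,\overline{\mathcal B}.\overline{\mathcal B}$, follows from $\overline{\mathcal B}^{k}\simeq\overline{\mathcal A}\otimes\overline{\mathcal E}$ with $\overline{\mathcal E}$ effective, so $\mathcal L_1.\overline{\mathcal A}\leq k\,\mathcal L_1.\overline{\mathcal B}$ up to the contribution of $\overline{\mathcal E}$, combined with (ii) and enlarging $d$.

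The last part (vi) is where I expect the real work, since the $\theta$-setting does not supply an a priori bound $\|\sigma\|<1$. The idea is to compare the three norms using the multiplicativity of the $L^2$-norm under the decomposition $\pi^{*}\sigma=\sigma_1\cdot\sigma_2$ together with standard distortion estimates between $L^2$ and sup norms for sections of a fixed family of line bundles. Concretely: since $\mathcal L_2$ is $\overline{\mathcal B}^{d}\otimes\mathcal L_1^{-1}$ with $\mathcal L_1.\overline{\mathcal B}$ small (of size $O(d^{\beta})$ in the relevant sense), $\mathcal L_2$ stays within a bounded-complexity family — its "discrepancy" from $\overline{\mathcal B}^{d}$ is controlled by $rd^{\beta}$ divisor components each of bounded height — so the ratio between the sup norm and the $L^2$-norm on $\ho^0(\widetilde\X,\mathcal L_2)$ is at most $C_3^{rd^{\beta}}$ for a constant $C_3>1$ depending only on $\widetilde\X$, $\overline{\mathcal B}$ and $N$; this gives the third bullet. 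Normalizing the metric on $\mathcal L_2$ (equivalently, fixing where $\mathcal L_1$ sits in the rank-one fiber of $N\cap\mathbb R$) so that $\|\sigma_2\|_{L^2}$ is pinned to an interval bounded away from $0$ and $\infty$ — using that $\sigma_2$ cuts out an effective divisor of controlled degree, one sees $\|\sigma_2\|_{L^2}$ cannot be too small without forcing $\sigma_2$ (hence $\sigma$) into a lower-dimensional locus, and the upper bound is the same distortion estimate in the other direction — yields the first bullet, $C_1^{-1}<\|\sigma_2\|_{L^2}<C_1$. Finally, $\|\sigma_1\|\cdot\|\sigma_2\|_{L^2}\leq C\,\|\sigma_1\cdot\sigma_2\|_{L^2}\cdot(\text{distortion})=C'\|\pi^{*}\sigma\|_{L^2}\cdot(\text{distortion})$, and since $\pi$ is an isomorphism over a dense open set, $\|\pi^{*}\sigma\|_{L^2}\leq\|\sigma\|$ up to a bounded factor; the accumulated distortion over the $O(d^{\beta})$ exceptional/extra components is at most $(dC_2\deg(\Li_{\Q}))^{rd^{\beta}}$, which is the second bullet. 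The main obstacle is making the chain of norm comparisons uniform in $\sigma$ and $d$: one must check that all the auxiliary constants ($C_1,C_2,C_3$, and the distortion bounds) depend only on the fixed data $\widetilde\X,\overline{\mathcal B},\overline{\mathcal A},\overline{\mathcal E},N$ and not on the particular $\sigma$, which is precisely where the finite-type hypothesis on $N$ and the boundedness of the number of "small" divisor components ($rd^{\beta}$ of them, each of bounded height by the $Y_d$ condition) are used.
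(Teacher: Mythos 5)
You have followed a typo in the paper: $Z_d$ is defined there as ``the subset of sections $\sigma$ such that $\mathrm{div}(\sigma)$ is irreducible'', but the very next sentence (``it suffices to show that $Z$ is of $\theta$-density $0$'') and the rest of the argument make it clear that $Z_d$ is meant to be the \emph{complement}, i.e.\ the sections in $Y_d$ whose divisor is \emph{reducible}; this is also how the corresponding Lemma~5.8 in \cite{Ch17} is set up. The reducibility is not cosmetic: it is the reason the decomposition is non-trivial.

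This matters for condition~(ii). If $\mathrm{div}(\sigma)$ were irreducible, then $\pi^{*}\mathrm{div}(\sigma)$ would be the strict transform $\widetilde D'$ plus exceptional components, and the exceptional part has zero intersection with $\overline{\mathcal B}=\pi^{*}\Li$; so the natural choice $\mathcal L_1=\mathcal O(\pi^{*}\mathrm{div}(\sigma)-\widetilde D')$ would have $\Li_1.\overline{\mathcal B}=0$, not $\geq d^{\delta}$. Your workaround --- ``transferring into $\mathcal L_1$ enough of $\widetilde D'$, of total $\overline{\mathcal B}$-degree between $d^{\delta}$ and $d^{\delta}+O(1)$'' --- is not legitimate: $\widetilde D'$ is the strict transform of the \emph{irreducible} $D'$, hence itself irreducible, with $\overline{\mathcal B}$-degree $\sim d\deg\Li_{\Q}$; there is no sub-divisor of intermediate degree to extract. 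Absorbing whole multiples of $\widetilde D'$ overshoots by a factor of order $d^{1-\delta}$, and the only remaining freedom, twisting by an element of $N\cap\mathbb R$, is a discrete rank-one group and would also move $\Li_1.\overline{\mathcal A}$, endangering~(iii). The intended source of the lower bound in~(ii) is the \emph{first} defining condition of $Y_d$ together with reducibility: since $\mathrm{div}(\sigma)$ is reducible it has a component $D''\neq D'$, and since $\sigma\in Y_d$ every component of $\mathrm{div}(\sigma)$ has height $>d^{\alpha}$; putting $D''$ (and the exceptional part) into $\mathcal L_1$ yields $\Li_1.\overline{\mathcal B}>d^{\alpha}>d^{\delta}$, and the upper bound is symmetric because $D'$ goes into $\mathcal L_2$ and also has height $>d^{\alpha}$.

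With this corrected, the rest of your sketch is roughly in line with the argument the paper defers to (it cites Charles' Lemma~5.8 verbatim and gives no independent proof), though for~(iii) it is cleaner to use $\Li_2.\overline{\mathcal A}\geq 0$ (since $\sigma_2\neq 0$ and $\overline{\mathcal A}$ is ample) together with $\overline{\mathcal B}.\overline{\mathcal A}\leq\overline{\mathcal B}.\overline{\mathcal B}$ (since $\overline{\mathcal E}$ is effective), giving $\Li_1.\overline{\mathcal A}\leq d\,\overline{\mathcal B}.\overline{\mathcal A}\leq d\,\overline{\mathcal B}.\overline{\mathcal B}$; your chain through $k\,\Li_1.\overline{\mathcal B}$ with $k>1$ does not directly give the stated bound.
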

Here the conditions on norms of sections is weakened. We can prove this lemma with exactly the same method as Lemma 5.9 in \cite{Ch21}.

Now we can finish the proof of the $\theta$-version of arithmetic Bertini theorem on irreducibility.
\begin{proof}[Proof of Theorem \ref{mainirred} (when $dim \X=2$)]
	Let $\sigma$ be a global section in $Z_d$. We can associate with it two Hermitian line bundles $\Li_1, \Li_2$ on $\widetilde{\X}$ and global sections $\sigma_1\in \mathrm{H}^0(\X, \Li_1),\ \sigma_2\in \mathrm{H}^0(\X, \Li_2)$ as in Lemma \ref{decomp}. Then
	\[
		||\sigma||=||\pi^{*}(\sigma)||\leq ||\sigma_1||\cdot||\sigma_2||.
	\]
	The condition (vi) of Lemma \ref{decomp} also gives a lower bound of the norm $||\sigma||$ by
	\begin{eqnarray*}
		||\sigma|| &\geq& (C_2\deg(\li_{\mathbb Q})\cdot d)^{-rd^{\beta}}\cdot C_3^{-rd^{\beta}}||\sigma_1||\cdot||\sigma_2|| \\
		&=&  (C_2C_3\deg(\li_{\mathbb Q})\cdot d)^{-rd^{\beta}}||\sigma_1||\cdot||\sigma_2||.
	\end{eqnarray*}
	So we have
	\begin{eqnarray*}
		& &\sum_{\sigma\in Z_d}e^{-\pi ||\sigma||^2} \\ 
		&\leq& \sum_{(\Li_1, \Li_2)}\left[ \sum_{C_1^{-1}<||\sigma_2||_0<C_1} \sum_{\sigma_1}\exp\left( -\pi(C_2C_3\deg(\mathcal{L}_{\mathbb Q})\cdot d)^{-2rd^{\beta}} ||\sigma_1||^2||\sigma_2||^2 \right)\right] \\
		&\leq& \sum_{(\Li_1, \Li_2)}\left[ \sum_{C_1^{-1}<||\sigma_2||<C_1C_3^{rd^{\beta}}} 
		\exp\left(h_{\theta}^0(\widetilde{\X}, \Li_1(rd^{\beta}\log d+rd^{\beta}\log(C_2C_3)\deg(\li_{\mathbb Q})-\log||\sigma_2||)) \right)\right] 
	\end{eqnarray*}
	Write $P(d,\sigma_2)=rd^{\beta}\log d+rd^{\beta}\log(C_2C_3)\deg(\mathcal{L}_{\mathbb Q})-\log||\sigma_2||$. The above inequality can be simplified to
	\begin{eqnarray*}
		\sum_{\sigma\in Z_d}e^{-\pi ||\sigma||^2}\leq   \sum_{(\Li_1, \Li_2)}\left[ \sum_{C_1^{-1}<||\sigma_2||<C_1C_3^{rd^{\beta}}} 
		\exp\left(h_{\theta}^0(\widetilde{\X}, \Li_1(P(d,\sigma_2))) \right)\right] 
	\end{eqnarray*}
	
	We write $l_1=\Li_1 . \overline{\mathcal B}$. We know from Lemma \ref{decomp} that $d^{\delta}\leq l_1\leq d \overline{\mathcal B} . \overline{\mathcal B}-d^{\delta}$. Therefore we have
	\begin{eqnarray*}
		\Li_1(P(d,\sigma_2)). \overline{\mathcal B} = l_1+P(d,\sigma_2)\overline{\mathcal{O}_{\widetilde{\X}}} . \overline{\mathcal B},
	\end{eqnarray*}
	where $\overline{\mathcal{O}_{\widetilde{\X}}}$ is the structure sheaf equipped with the trivial metric. Then applying Proposition 4.7 in \cite{Ch21}, we obtain that
	\begin{eqnarray*}
		& &h_{\theta}^0(\widetilde{\X}, \Li_1(P(d,\sigma_2))) \\
		&\leq & \frac{(\Li_1(P(d,\sigma_2)). \overline{\mathcal B})^2 }{2\overline{\mathcal B}. \overline{\mathcal B} } +O\left( (\Li_1(P(d,\sigma_2)). \overline{\mathcal B})\log( \deg\li_{1, \mathbb{Q}}) \right) \\
		& & +O(\deg\li_{1, \mathbb{Q}}\log(\deg\li_{1, \mathbb{Q}}))+O(1)  \\ \\
		&=& \frac{(l_1+P(d,\sigma_2)\overline{\mathcal{O}_{\widetilde{\X}}} . \overline{\mathcal B} )^2}{2\overline{\mathcal B}. \overline{\mathcal B} }+ O\left( (l_1+P(d,\sigma_2)\overline{\mathcal{O}_{\widetilde{\X}}}. \overline{\mathcal B} )\log(\deg\li_{1, \mathbb{Q}}) \right) \\
		& & +O\left( \deg\li_{1, \mathbb{Q}}\log( \deg\li_{1, \mathbb{Q}}) \right)+O(1) \\ \\
		&=& \frac{(l_1+P(d,\sigma_2)\overline{\mathcal{O}_{\widetilde{\X}}} . \overline{\mathcal B} )^2}{2\overline{\mathcal B}. \overline{\mathcal B} }+ O\left( (l_1+P(d,\sigma_2)\overline{\mathcal{O}_{\widetilde{\X}}}. \overline{\mathcal B} )\log(d ) \right) +O\left( d\log(d) \right).
	\end{eqnarray*}
	When $d\gg 1$, $P(d,\sigma_2)$ is positive for any $\sigma_2$ satisfying $C_1^{-1}\leq ||\sigma_2||\leq C_1C_3^{rd^{\beta}}$, as
	\[
		P(d, \sigma_2)\geq rd^{\beta}(\log d+\log(C_2C_3\deg \li_{\mathbb Q})-\log C_3)-\log C_1
	\]
	Moreover, since $\beta<\frac{1}{2}$,
	\[
		P(d, \sigma_2)\leq  rd^{\beta}(\log d+\log(C_2C_3\deg \li_{\mathbb Q}))+\log C_1=o(d).
	\]
	Finally, we obtain the following estimate :
	\[
		h_{\theta}^0(\widetilde{\X}, \Li_1(P(d,\sigma_2)))\leq \frac{(l_1+P(d,\sigma_2)\overline{\mathcal{O}_{\widetilde{\X}}} . \overline{\mathcal B} )^2}{2\overline{\mathcal B}. \overline{\mathcal B} }+O(d\log d).
	\]
	If we fix $\Li_1$ (then so is $\Li_2$ by Lemma \ref{decomp}), then we get
	\begin{eqnarray*}
		& &\sum_{C_1^{-1}<||\sigma_2||<C_1C_3^{rd^{\beta}}} \exp\left(h_{\theta}^0(\widetilde{\X}, \Li_1(P(d,\sigma_2))) \right) \\
		&\leq & \sum_{C_1^{-1}<||\sigma_2||<C_1C_3^{rd^{\beta}}}\exp\left( \frac{(l_1+P(d,\sigma_2)\overline{\mathcal{O}_{\widetilde{\X}}} . \overline{\mathcal B} )^2}{2\overline{\mathcal B}. \overline{\mathcal B} }+O(d\log d) \right) \\
		&\leq & \sum_{C_1^{-1}<||\sigma_2||<C_1C_3^{rd^{\beta}}}\exp\left( \frac{[ l_1+(rd^{\beta}(\log d+\log(C_2C_3\deg \li_{\mathbb Q}))+\log C_1 ) \overline{\mathcal{O}_{\widetilde{X}}}. \overline{\mathcal B}]^2}{2\overline{\mathcal B}. \overline{\mathcal B} }+O(d\log d) \right) \\ \\
		&\leq&  \Big(\#\mathrm{H}_{\mathrm{Ar}}^0(\widetilde{\X}, \Li_2(\log C_1+rd^{\beta}\log C_3))\Big)\\
		& & \qquad \cdot\exp\left( \frac{[ l_1+(rd^{\beta}(\log d+\log(C_2C_3\deg \li_{\mathbb Q}))+\log C_1 ) \overline{\mathcal{O}_{\widetilde{\X}}}. \overline{\mathcal B}]^2}{2\overline{\mathcal B}. \overline{\mathcal B} }+O(d\log d) \right).
	\end{eqnarray*}
	Then the arithmetic Hilbert-Samuel formula for $h^0_{\mathrm{Ar}}$ (see for example Theorem 2.2 in \cite{Wa22} ) tells us that
	\begin{eqnarray*}
		& & h_{\mathrm{Ar}}^0(\widetilde{\X}, \Li_2(\log C_1+rd^{\beta}\log C_3)) \\ \\
		&\leq& \frac{ \left(\Li_2 . \overline{\mathcal B} +(\log C_1+rd^{\beta}\log C_3)\overline{\mathcal{O}_{\widetilde{\X}}} . \overline{\mathcal B}\right)^2 }{2\overline{\mathcal B}. \overline{\mathcal B}}+O(d\log d) \\
		&=& \frac{ \left(d\overline{\mathcal B} . \overline{\mathcal B}-l_1 +(\log C_1+rd^{\beta}\log C_3)\overline{\mathcal{O}_{\widetilde{\X}}} . \overline{\mathcal B}\right)^2 }{2\overline{\mathcal B}. \overline{\mathcal B}}+O(d\log d).
	\end{eqnarray*}
	We also have
	\begin{eqnarray*}
		& & \log \left(\sum_{C_1^{-1}<||\sigma_2||<C_1C_3^{rd^{\beta}}} \exp\left(h_{\theta}^0(\widetilde{\X}, \Li_1(P(d,\sigma_2))) \right)   \right)\\ \\
		&\leq&  \frac{[ l_1+(rd^{\beta}(\log d+\log(C_2C_3\deg \li_{\mathbb Q}))+\log C_1 ) \overline{\mathcal{O}_{\widetilde{\X}}}. \overline{\mathcal B}]^2}{2\overline{\mathcal B}. \overline{\mathcal B} } \\
		& & \ +\frac{ \left(d\overline{\mathcal B} . \overline{\mathcal B}-l_1 +(\log C_1+rd^{\beta}\log C_3)\overline{\mathcal{O}_{\widetilde{\X}}} . \overline{\mathcal B}\right)^2 }{2\overline{\mathcal B}. \overline{\mathcal B}}+O(d\log d).
	\end{eqnarray*}
	To simplify, we can find a constant $K>0$ such that
	\begin{eqnarray*}
		Kd^{\beta} &>& (rd^{\beta}(\log d+\log(C_2C_3\deg \li_{\mathbb Q}))+\log C_1 ) \overline{\mathcal{O}_{\widetilde{\X}}}. \overline{\mathcal B}, \\
		Kd^{\beta} &>& (\log C_1+rd^{\beta}\log C_3)\overline{\mathcal{O}_{\widetilde{\X}}} . \overline{\mathcal B}.
	\end{eqnarray*}
	Therefore we have
	\begin{eqnarray*}
		& & \log \left(\sum_{C_1^{-1}<||\sigma_2||<C_1C_3^{rd^{\beta}}} \exp\left(h_{\theta}^0(\widetilde{\X}, \Li_1(P(d,\sigma_2))) \right)   \right) \\
		&<&  \frac{( l_1+Kd^{\beta})^2}{2\overline{\mathcal B}. \overline{\mathcal B} } +\frac{ \left(d\overline{\mathcal B} . \overline{\mathcal B}-l_1 +Kd^{\beta}\right)^2 }{2\overline{\mathcal B}. \overline{\mathcal B}}+O(d\log d) \\ \\
		&=& \frac{l_1^2+( d\overline{\mathcal B} . \overline{\mathcal B}-l_1)^2+ 2(l_1+ d\overline{\mathcal B} . \overline{\mathcal B}-l_1)\cdot Kd^{\beta}     +2Kd^{2\beta}}{2\overline{\mathcal B}. \overline{\mathcal B}} +O(d\log d) \\
		&=&  \frac{l_1^2+( d\overline{\mathcal B} . \overline{\mathcal B}-l_1)^2+ 2Kd^{1+\beta} +O(d)}{2\overline{\mathcal B}. \overline{\mathcal B}} +O(d\log d) \\
		&=& d^2\overline{\mathcal B} . \overline{\mathcal B}-2dl_1 +\frac{K}{\overline{\mathcal B} . \overline{\mathcal B}}d^{1+\beta} +O(d\log d) \\
		&\leq& d^2\overline{\mathcal B} . \overline{\mathcal B}-2d^{1+\delta}+\frac{K}{\overline{\mathcal B} . \overline{\mathcal B}}d^{1+\beta} +O(d\log d).
	\end{eqnarray*}
	This estimate does not depend on the choice of $\Li_1$. Moreover, in the proof of Proposition 5.10 in \cite{Ch21}, Charles also computed the number of $\overline{\mathcal L}_1$, which is bounded by $O(d^{\rho})$ with $\rho$ the rank of the subgroup $N$. 
		
	Finally, we get the following bound for sections in $Z_d$ :
	\begin{eqnarray*}
		& &\sum_{\sigma\in Z_d}e^{-\pi ||\sigma||^2} \\ 
		&\leq& \sum_{(\Li_1, \Li_2)}\left[ \sum_{C_1^{-1}<||\sigma_2||<C_1C_3^{rd^{\beta}}} 
		\exp\left(h_{\theta}^0(\widetilde{\X}, \Li_1(P(d,\sigma_2))) \right)\right] \\
		&\leq& O(d^{\rho})\cdot \exp\left[ d^2\overline{\mathcal B} . \overline{\mathcal B}-2d^{1+\delta}+\frac{K}{\overline{\mathcal B} . \overline{\mathcal B}}d^{1+\beta} +O(d\log d) \right] \\
		&=& \exp\left[ d^2\overline{\mathcal B} . \overline{\mathcal B}-2d^{1+\delta}+\frac{K}{\overline{\mathcal B} . \overline{\mathcal B}}d^{1+\beta} +O(d\log d) \right]
	\end{eqnarray*}
	On the other hand, we have
	\begin{eqnarray*}
		h_{\theta}^0(\X, \Li^{\otimes d})&=& h_{\theta}^0(\widetilde{\X}, \overline{\mathcal B}^{\otimes d}) \\
		&\geq& \chi(\overline{\mathcal B}^{\otimes d})\geq \frac{1}{2}d^2\overline{\mathcal B}. \overline{\mathcal B}+O(d\log d).
	\end{eqnarray*}
	Hence we have
	\begin{eqnarray*}
		\frac{\sum_{\sigma\in Z_d}e^{-\pi ||\sigma||^2}}{\exp(h_{\theta}^0(\X, \Li^{\otimes d}))}\leq \exp\left[ -2d^{1+\delta}+\frac{K}{\overline{\mathcal B} . \overline{\mathcal B}}d^{1+\beta} +O(d\log d) \right]\longrightarrow 0
	\end{eqnarray*}
	when $d \rightarrow \infty$, as by hypothesis $\delta>\beta$. This concludes our proof.
\end{proof}

\section{Arithmetic Bertini theorem on regularity}\label{Regular}

In this section, we prove Theorem \ref{mainvartheta}. In this proof, the main estimates on a single fiber, i.e. Proposition \ref{fiber} and Proposition \ref{singmed} below, has already been proved in \cite{Wa22}. Our main contribution here is to replace the restriction results on Arakelov density by similar results on $\theta$-density proved in Section \ref{restriction}.

\subsection{Singularities of small residual characteristic}
\begin{prop}\label{smalltheta}
	Let $\X$ be a regular projective arithmetic variety of absolute dimension $n$, and let $\Li$ be an ample Hermitian line bundle on $\X$. Set
	\[
		\mathcal{P}_{d, p\leq d^{\frac{1}{n+1}}}:=\left\{ \sigma\in \ho^0(\X,\Li^{\otimes d})\ ;\ \begin{array}{ll}
		\mathrm{div}\sigma \text{ has no singular point of residual} \\
		\text{characteristic at most } d^{\frac{1}{n+1}}
		\end{array}\right\}.
	\]
	When $d$ is sufficiently large, we have
	\[
		\left|  \frac{\sum_{\sigma\in\mathcal{P}_{d, p\leq d^{\frac{1}{n+1}}}}\exp(-\pi\lVert\sigma\rVert^2)}{ \sum_{\sigma \in \mathrm{H}^0(\X, \overline{\mathcal{L}}^{\otimes d})} \exp(-\pi \lVert\sigma\rVert^2) } -\zeta_\X(n+1)^{-1} \right| = O(d^{-\frac{1}{n+1} }).
	\]
	Here the constant involved in the big $O$ depends only on $\X$.
	
	In particular, writing $\mathcal{P}_B=\bigcup_{d> 0}\mathcal{P}_{d, p\leq d^{\frac{1}{n+1}}}$ we have
	\[
		\mu_{\theta}(\mathcal{P}_B)=\zeta_\X(n+1)^{-1}.
	\]
\end{prop}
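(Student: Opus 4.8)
The plan is to reduce the weighted count defining the $\theta$-density to a finite-level count via Corollary \ref{thetadensity}, and then to quote the single-fiber estimates of \cite{Wa20}.

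First I would note that the defining condition is of finite type modulo a small modulus. Since $\X$ is regular, for a closed point $x$ of $\X$ of residual characteristic $p$ the divisor $\mathrm{div}(\sigma)$ is singular at $x$ if and only if a local equation of $\sigma$ lies in $\m_x^2$; as $p\in\m_x$ we have $(p^2)\subset\m_x^2$, so this depends only on the reduction of $\sigma$ modulo $p^2$. Hence, setting $N_d=\prod_{p<d^{1/(n+1)}}p^2$, the set $\mathcal{P}_{d, p\leq d^{\frac{1}{n+1}}}$ equals $\phi_{d,N_d}^{-1}(E_d)$, where $E_d\subset\ho^0(\X_{N_d},\Li^{\otimes d})$ is the subset of sections whose divisor is regular at every closed point of $\X_p$ for all $p<d^{1/(n+1)}$ (we use Lemma \ref{freemod} to identify $\ho^0(\X_{N_d},\Li^{\otimes d})$ with $\ho^0(\X,\Li^{\otimes d})/N_d\ho^0(\X,\Li^{\otimes d})$ for $d$ large). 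By the prime number theorem $\log N_d=2\sum_{p<d^{1/(n+1)}}\log p=O(d^{1/(n+1)})$, so $N_d\le e^{\delta d}$ for $d$ large and any fixed $0<\delta<\varepsilon_0$. Corollary \ref{thetadensity} then yields
\[
	\left|\frac{\sum_{\sigma\in\mathcal{P}_{d, p\leq d^{\frac{1}{n+1}}}}e^{-\pi\lVert\sigma\rVert^2}}{\sum_{\sigma\in\ho^0(\X,\Li^{\otimes d})}e^{-\pi\lVert\sigma\rVert^2}}-\frac{\#E_d}{\#\ho^0(\X_{N_d},\Li^{\otimes d})}\right|<6\exp\big(K_0d^n-\pi e^{2(\varepsilon_0-\delta)d}\big),
\]
and the right-hand side is $o(d^{-1/(n+1)})$, so it suffices to estimate the finite-level proportion $\#E_d/\#\ho^0(\X_{N_d},\Li^{\otimes d})$.

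By the Chinese remainder theorem applied to the free $\Z$-module $\ho^0(\X,\Li^{\otimes d})/N_d\ho^0(\X,\Li^{\otimes d})$ and Lemma \ref{freemod}, one has $\ho^0(\X_{N_d},\Li^{\otimes d})\cong\prod_{p<d^{1/(n+1)}}\ho^0(\X_{p^2},\Li^{\otimes d})$, and $E_d$ corresponds to the product of the local pieces $E_{d,p}$, so
\[
	\frac{\#E_d}{\#\ho^0(\X_{N_d},\Li^{\otimes d})}=\prod_{p<d^{1/(n+1)}}\frac{\#E_{d,p}}{\#\ho^0(\X_{p^2},\Li^{\otimes d})}.
\]
I would then invoke the single-fiber estimates, Proposition \ref{fiber} and Proposition \ref{singmed} below, proved in \cite{Wa20}, which give that each local factor equals the Euler factor $\prod_{x\in\X_p}\big(1-\#\kappa(x)^{-(n+1)}\big)$ of $\zeta_\X(n+1)^{-1}$ up to an error controlled uniformly in $p$. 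Since $\X_p$ has dimension $n-1$ one has $\#\X_p(\F_{p^f})=O(p^{f(n-1)})$ uniformly in $p$ (using a fixed projective embedding of $\X$), hence $\sum_{x\in\X_p}\#\kappa(x)^{-(n+1)}=O(p^{-2})$ and the local Euler factor is $1+O(p^{-2})$. Therefore the tail of the Euler product $\zeta_\X(n+1)^{-1}=\prod_p\prod_{x\in\X_p}\big(1-\#\kappa(x)^{-(n+1)}\big)$ over the primes $p\ge d^{1/(n+1)}$ is $1+O(d^{-1/(n+1)})$, and the accumulated single-fiber errors over $p<d^{1/(n+1)}$ sum likewise to $O(d^{-1/(n+1)})$. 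Combining these with the displayed inequality gives
\[
	\left|\frac{\sum_{\sigma\in\mathcal{P}_{d, p\leq d^{\frac{1}{n+1}}}}e^{-\pi\lVert\sigma\rVert^2}}{\sum_{\sigma\in\ho^0(\X,\Li^{\otimes d})}e^{-\pi\lVert\sigma\rVert^2}}-\zeta_\X(n+1)^{-1}\right|=O(d^{-1/(n+1)}),
\]
and letting $d\to\infty$ shows $\mu_\theta(\mathcal{P}_B)=\zeta_\X(n+1)^{-1}$.

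The only genuinely new ingredient beyond \cite{Wa20} is the passage to a finite level through Corollary \ref{thetadensity}, so the main point requiring care is the verification that the modulus $N_d$ is subexponential in $d$ — which is immediate here because the threshold $d^{1/(n+1)}$ is polynomial, in contrast with the exponential threshold of Theorem \ref{mainvartheta} — together with the standard bookkeeping that both the Euler-product tail and the accumulated fiberwise errors are $O(d^{-1/(n+1)})$; these last estimates are exactly those already performed in \cite{Wa20} for the Arakelov density and are transported unchanged.
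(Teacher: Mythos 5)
Your proposal is correct and follows essentially the same route as the paper: reduce to a finite-level count via Corollary \ref{thetadensity}, decompose $\ho^0(\X_{N_d},\Li^{\otimes d})$ by CRT into local factors, invoke the single-fiber estimate of Proposition \ref{fiber} (note that Proposition \ref{singmed} is not needed here --- it only enters the proof of Theorem \ref{mainvartheta} for primes between $d^{1/(n+1)}$ and $e^{\varepsilon d}$), and control the Euler-product tail; the paper merely organizes the same bookkeeping into the intermediate Lemmas \ref{manytheta} and \ref{brtheta}.
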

We will use the estimate on one single fiber proved in \cite{Wa22} (Proposition 4.2 in that paper) :
\begin{prop}\label{fiber}
	Let $\X$ be a regular projective arithmetic variety of absolute dimension $n$, and let $\Li$ be an ample Hermitian line bundle on $\X$. For any prime number $p$, we define
	\[
		\mathcal{P}'_{d,p^2}:=\left\{ \sigma'\in \ho^0(\X_{p^2}, \overline{\mathcal{L}}^{\otimes d})\ ;\ \forall x\in |\mathrm{div}\sigma'|,\ \dim_{\kappa(x)}\frac{\m_{\mathrm{div}\sigma',x}}{\m^2_{\mathrm{div}\sigma', x}}=n-1 \right\},
	\]
	where $\X_{p^2}=\X\times_{\Z}\Z/p^2\Z$. There exists a constant $C>1$ such that for any large enough integer $d$ and any prime number $p$ verifying $Cnp^n<d$, we have
	\[
		\left| \frac{\#\mathcal{P'}_{d,p^2}}{\#\ho^0(\X_{p^2}, \li^{\otimes d})} -\zeta_{\X_p}(n+1)^{-1} \right|=O\left( \Big(\frac{d}{p}\Big)^{-\frac{2}{n}} \right),
	\] 
	where the constant involved in big $O$ is independent of $d,p$.
\end{prop}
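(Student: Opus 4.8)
The plan is to prove the stated estimate by the closed-point sieve of Poonen (\cite{Po04}), carried out one infinitesimal order higher than the usual smoothness condition — which is exactly why one counts over $\Z/p^2\Z$ rather than over $\F_p$ — while keeping every estimate uniform in $p$.

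\textbf{Step 1: localisation of the regularity condition.} First I would rewrite the condition defining $\mathcal{P}'_{d,p^2}$ as a condition at each closed point of $\X_p$. Since $p$ is nilpotent on $\X_{p^2}$, the support $|\mathrm{div}\,\sigma'|$ lies in $\X_p$. Fixing a closed point $x$ of $\X_p$ and a local generator of $\Li^{\otimes d}$ near $x$, write $f_x\in\mathcal{O}_{\X,x}/p^2$ for the resulting local equation of $\sigma'$. As $\X$ is regular of dimension $n$, $\mathcal{O}_{\X,x}$ is regular local of dimension $n$ with $p\in\m_x$, so $p^2\in\m_x^2$ and the class of $f_x$ in $\mathcal{O}_{\X,x}/\m_x^2$ is well defined. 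A direct computation of the cotangent space of $\mathrm{div}\,\sigma'$ at $x$ (its local ring is $\mathcal{O}_{\X,x}/(p^2,f_x)$, with cotangent space $\m_x/(\m_x^2+(f_x))$) shows that $\dim_{\kappa(x)}\m_{\mathrm{div}\,\sigma',x}/\m^2_{\mathrm{div}\,\sigma',x}=n-1$ holds exactly when $f_x\notin\m_x^2$. Writing $B_x$ for the set of $\sigma'\in\ho^0(\X_{p^2},\Li^{\otimes d})$ with $f_x\in\m_x^2$, we obtain $\mathcal{P}'_{d,p^2}=\bigcap_x B_x^{c}$, the intersection over all closed points $x$ of $\X_p$.

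\textbf{Step 2: the main term, from low-degree points.} The class of $f_x$ modulo $\m_x^2$ depends only on the image of $\sigma'$ in $\Li^{\otimes d}\otimes(\mathcal{O}_{\X,x}/\m_x^2)$, a free $\kappa(x)$-module of rank $n+1$; hence $B_x$ has density $q_x^{-(n+1)}$, with $q_x=\#\kappa(x)$, as soon as the corresponding restriction map is surjective. Because $\X$ is fixed over $\Z$, a fixed power of $\li$ embeds all fibres with uniformly bounded Castelnuovo--Mumford regularity, so this surjectivity — simultaneously at all closed points of degree at most a threshold $r=r(d,p)$, which one may take of order roughly $\log_p(d/p)$ — holds uniformly in $p$ once $d\gg1$, and moreover the events $B_x$ for $\deg x\le r$ become jointly independent. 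Thus
\[
	\frac{\#\bigl(\bigcap_{\deg x\le r}B_x^{c}\bigr)}{\#\ho^0(\X_{p^2},\Li^{\otimes d})}=\prod_{\deg x\le r}\bigl(1-q_x^{-(n+1)}\bigr),
\]
which differs from $\zeta_{\X_p}(n+1)^{-1}=\prod_{x}(1-q_x^{-(n+1)})$ by at most $\sum_{\deg x>r}q_x^{-(n+1)}$. Using the uniform bound $\#\{x:\deg x=e\}\le C'p^{e(n-1)}$ (Lang--Weil, or an elementary point count from a fixed embedding), this tail is $O(\sum_{e>r}p^{-2e})=O(p^{-2r})=O((d/p)^{-2/n})$; the exponent $2/n$ reflects that $n+1$ exceeds $\dim\X_p=n-1$ by exactly $2$.

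\textbf{Step 3: medium and high degrees, and conclusion.} It remains to bound the proportion of $\sigma'$ bad at some point of degree $>r$, and this is the genuinely delicate part, since the naive union bound diverges and surjectivity fails. Following Poonen I would split at $\deg x\sim d/(n+1)$. For \emph{medium} degrees $r<\deg x\le d/(n+1)$ one no longer has surjectivity onto $\Li^{\otimes d}\otimes(\mathcal{O}_{\X,x}/\m_x^2)$, but $\#B_x$ is still controlled by the size of the image, and the per-point bound, multiplied by $\#\{x:\deg x=e\}$ and summed, again contributes $O((d/p)^{-2/n})$ after optimising $r$. For \emph{high} degrees $\deg x>d/(n+1)$ one argues geometrically: a degree-$d$ section singular at such a point would have to vanish, together with its first-order data, along a subscheme of $\X_p$ that is too large, and a dimension count combined with Lang--Weil on $\X_p$ — made uniform in $p$ precisely in the range $Cnp^n<d$, which is what forces these error terms below the main term — shows the proportion of such $\sigma'$ is also $O((d/p)^{-2/n})$. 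Combining the three ranges yields
\[
	\Bigl|\frac{\#\mathcal{P}'_{d,p^2}}{\#\ho^0(\X_{p^2},\Li^{\otimes d})}-\zeta_{\X_p}(n+1)^{-1}\Bigr|=O\bigl((d/p)^{-2/n}\bigr)
\]
with implied constant depending only on $\X$. The main obstacle is Step~3: producing the medium- and high-degree estimates with an \emph{explicit} rate uniform in $p$, rather than the qualitative vanishing that suffices over a single fixed finite field — this is where the hypotheses $d\gg1$ and $Cnp^n<d$ are used.
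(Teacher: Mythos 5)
Your sieve strategy --- Poonen's closed-point sieve run on the thickened fibre $\X_{p^2}$, with a local condition of codimension $n+1$ at each closed point of $\X_p$ and a low/medium/high degree split kept uniform in $p$ --- is the right one, and it is the method of the source this statement actually comes from: the present paper does not prove Proposition \ref{fiber} at all, it imports it verbatim as Proposition 4.2 of \cite{Wa20}, so the comparison is with the proof given there. Your Step 1 is correct, and in Step 2 the count is right up to a slip of language: $\mathcal{O}_{\X,x}/\m_x^2$ is not a $\kappa(x)$-vector space in mixed characteristic, but it has length $n+1$ and cardinality $(\#\kappa(x))^{n+1}$, which is all the density computation needs; the joint independence, and the tail estimate $O(p^{-2r})=O((d/p)^{-2/n})$ for $r\approx\frac{1}{n}\log_p(d/p)$, are as you say, granted a quantitative, uniform-in-$p$ surjectivity statement for restriction to the union of second-order neighbourhoods, whose total length is of size about $(n+1)(d/p)^{(n-1)/n}$.

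The genuine gap is Step 3, which you flag but do not carry out, and whose sketch is not the argument that works. The medium range is indeed routine (single-point surjectivity holds for $\deg x \lesssim d/(n+1)$, and the same geometric series again gives $O(p^{-2r})$), but the high-degree range is not ``a dimension count combined with Lang--Weil'': in Poonen's sieve it requires decomposing a section in terms of local coordinates and partial derivatives (the $p$-th power trick) together with an induction over subvarieties, and on $\X_{p^2}$ one must in addition control the vertical direction --- writing $\sigma=\tau_0+p\tau_1$ and treating $\tau_1$ as an extra first-order datum, since one cannot differentiate in the $p$-direction --- all with bounds uniform in $p$. This is precisely where the hypothesis $Cnp^n<d$ and the explicit rate $O\big((d/p)^{-2/n}\big)$ are earned in \cite{Wa20}, and your proposal asserts them rather than proves them; as it stands it establishes the main term but not the error bound, which is the entire content of the proposition.
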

\begin{rmq}
	Set
	\[
		\mathcal{P}_{d,p}:=\left\{ \sigma\in \ho^0(\X, \Li^{\otimes d})\ ;\ \mathrm{div}\ \sigma \text{ has no singular point on } \X_p \right\}.
	\]
	Then for any $\sigma\in \ho^0(\X, \Li^{\otimes d})$, we have $\sigma\in \mathcal{P}_{d,p}$ if and only if $\phi_{d,p^2}(\sigma)\in \mathcal{P'}_{d,p^2}$, in other words,
	\[
		\mathcal{P}_{d,p}=\phi_{d,p^2}^{-1}(\mathcal{P}'_{d,p^2}).
	\]
\end{rmq}

\begin{lem}\label{manytheta}
	With the same constant $C$ as in Proposition \ref{fiber}, for any large enough integer $d$ and any integer $r$ verifying $Cnr^n<d$ with $n=\dim \X$ and $N_r:=\prod_{p\leq r}p^2<e^{d^{\alpha_0}}$, we have
	\[
		\left|  \frac{\sum_{\sigma\in \left(\bigcap_{p\leq r}\mathcal{P}_{d,p}\right)} \exp(-\pi \lVert\sigma\rVert^2)}{ \sum_{\sigma \in \mathrm{H}^0(\X, \overline{\mathcal{L}}^{\otimes d})} \exp(-\pi \lVert\sigma\rVert^2) } -\prod_{p\leq r}\zeta_{\X_p}(n+1)^{-1}  \right| =O\left( \frac{\sum_{p\leq r}p^{\frac{2}{n}}}{d^{\frac{2}{n}}} \right).
	\]
\end{lem}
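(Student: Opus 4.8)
The plan is to combine the single-fiber estimate of Proposition \ref{fiber} with the comparison between $\theta$-density and the mod-$N$ counting density provided by Corollary \ref{thetadensity}. First I would set $N_r=\prod_{p\leq r}p^2$ and observe, using the remark after Proposition \ref{fiber} together with the Chinese Remainder Theorem, that the event ``$\mathrm{div}\sigma$ has no singular point on $\X_p$ for all $p\leq r$'' is detected modulo $N_r$: concretely, $\bigcap_{p\leq r}\mathcal{P}_{d,p}=\phi_{d,N_r}^{-1}(E)$, where $E\subset\ho^0(\X_{N_r},\Li^{\otimes d})$ is the image of $\prod_{p\leq r}\mathcal{P}'_{d,p^2}$ under the isomorphism $\ho^0(\X_{N_r},\Li^{\otimes d})\simeq\prod_{p\leq r}\ho^0(\X_{p^2},\Li^{\otimes d})$ furnished by Lemma \ref{freemod} and CRT (valid once $d$ is large). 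Hence
\[
	\frac{\#E}{\#\ho^0(\X_{N_r},\Li^{\otimes d})}=\prod_{p\leq r}\frac{\#\mathcal{P}'_{d,p^2}}{\#\ho^0(\X_{p^2},\Li^{\otimes d})}.
\]

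Next I would feed this into Corollary \ref{thetadensity}. The hypothesis $N_r<e^{d^{\alpha_0}}$ (with $\alpha_0$ chosen below the constant $\delta<\varepsilon_0$ appearing there, which we may assume) guarantees that the corollary applies, giving
\[
	\left|\frac{\sum_{\sigma\in\bigcap_{p\leq r}\mathcal{P}_{d,p}}e^{-\pi\|\sigma\|^2}}{\sum_{\sigma\in\ho^0(\X,\Li^{\otimes d})}e^{-\pi\|\sigma\|^2}}-\prod_{p\leq r}\frac{\#\mathcal{P}'_{d,p^2}}{\#\ho^0(\X_{p^2},\Li^{\otimes d})}\right|<6\exp\!\big(K_0d^n-\pi e^{2(\varepsilon_0-\delta)d}\big)\cdot\prod_{p\leq r}\frac{\#\mathcal{P}'_{d,p^2}}{\#\ho^0(\X_{p^2},\Li^{\otimes d})},
\]
and the right-hand side is $o(d^{-2/n})$, indeed super-exponentially small, since $e^{2(\varepsilon_0-\delta)d}$ dominates $K_0d^n$. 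So up to a negligible error the $\theta$-weighted proportion equals the product of the mod-$p^2$ proportions.

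It then remains to replace each factor $\#\mathcal{P}'_{d,p^2}/\#\ho^0(\X_{p^2},\Li^{\otimes d})$ by $\zeta_{\X_p}(n+1)^{-1}$. By Proposition \ref{fiber}, for each $p\leq r$ (note $Cnp^n\leq Cnr^n<d$, so the hypothesis holds) we have
\[
	\frac{\#\mathcal{P}'_{d,p^2}}{\#\ho^0(\X_{p^2},\Li^{\otimes d})}=\zeta_{\X_p}(n+1)^{-1}+O\!\big((d/p)^{-2/n}\big),
\]
with uniform implied constant. Multiplying these out, I would use that all factors $\zeta_{\X_p}(n+1)^{-1}$ lie in $(0,1]$ (in fact bounded below away from $0$ uniformly, since $\zeta_{\X_p}(n+1)\leq\zeta_\X(n+1)<\infty$) to control the product: a standard telescoping bound $|\prod a_i-\prod b_i|\leq\sum_i|a_i-b_i|\prod_{j\neq i}\max(|a_j|,|b_j|)$ gives
\[
	\left|\prod_{p\leq r}\frac{\#\mathcal{P}'_{d,p^2}}{\#\ho^0(\X_{p^2},\Li^{\otimes d})}-\prod_{p\leq r}\zeta_{\X_p}(n+1)^{-1}\right|=O\!\Big(\sum_{p\leq r}(d/p)^{-2/n}\Big)=O\!\Big(\frac{\sum_{p\leq r}p^{2/n}}{d^{2/n}}\Big),
\]
once we check the error terms stay bounded (which holds as soon as $d$ is large relative to $r$, forced by $Cnr^n<d$). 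Combining the two displays via the triangle inequality yields the claimed bound. The main obstacle I anticipate is purely bookkeeping: ensuring the various ``$d$ large enough'' thresholds are compatible — in particular that $Cnr^n<d$ and $N_r<e^{d^{\alpha_0}}$ together keep every factor in Proposition \ref{fiber} within its range of validity and keep the $O$-constants genuinely uniform in both $d$ and $r$ — and verifying that the $\exp(K_0d^n-\pi e^{2(\varepsilon_0-\delta)d})$ term from Corollary \ref{thetadensity} is dominated, i.e. that one may take $\alpha_0<1$ so that $N_r\le e^{d^{\alpha_0}}$ is consistent with $\delta<\varepsilon_0$; the substantive analytic content is entirely in Proposition \ref{fiber} and Corollary \ref{thetadensity}, which we are free to cite.
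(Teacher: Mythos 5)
Your argument is correct and matches the paper's proof step for step: identify $\bigcap_{p\leq r}\mathcal{P}_{d,p}$ as $\phi_{d,N_r}^{-1}(E_{d,r})$, factor the mod-$N_r$ proportion via the Chinese Remainder Theorem, apply Corollary~\ref{thetadensity} (using $N_r<e^{d^{\alpha_0}}\le e^{\delta d}$ for large $d$) to pass to the $\theta$-weighted ratio with a super-exponentially small error $6\exp(K_0d^n-\pi e^{2(\varepsilon_0-\delta)d})$, and combine by the triangle inequality. The only divergence is that you rederive the product estimate $\bigl|\prod_{p\le r}a_p-\prod_{p\le r}b_p\bigr|=O\bigl(\sum_{p\le r}(d/p)^{-2/n}\bigr)$ from Proposition~\ref{fiber} by the standard telescoping bound for factors in $[0,1]$, whereas the paper simply cites this as Lemma~5.3 of \cite{Wa20}; both are valid, and yours is more self-contained.
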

\begin{proof}
	Note that $\X_{N_r}=\coprod_{p\leq r}\X_{p^2}$. We have when $d$ is large enough,
	\begin{eqnarray*}
		\ho^0(\X_{N_r}, \li^{\otimes d}) \simeq  \prod_{p\leq r}\ho^0(\X_{p^2}, \li^{\otimes d}).
	\end{eqnarray*}
	Set
	\[
		E_{d,r}:=\{ \sigma\in \ho^0(\X_{N_r}, \li^{\otimes d})\ ;\ \forall x\in |\mathrm{div}\sigma|,\ \dim_{\kappa(x)}\frac{\m_{\mathrm{div}\sigma,x}}{\m^2_{\mathrm{div}\sigma, x}}=n-1 \}.
	\]
	Then we have
	\[
		\phi_{d, N_r}^{-1}(E_{d,r})=\bigcap_{p\leq r}\mathcal{P}_{d,p},
	\]
	where $\phi_{d,N_r}$ is the restriction map
	\[
		\phi_{d,N_r}: \ho^0(\X,\Li^{\otimes d}) \longrightarrow \ho^0(\X_{N_r},\li^{\otimes d}).
	\]
	On the other hand, still by the Chinese remainder theorem, 
	\[
		\frac{\#E_{d,r}}{\#\ho^0(\X_{N_r}, \li^{\otimes d})}=\prod_{p\leq r} \frac{\#\mathcal{P}'_{p^2,d}}{\#\ho^0(\X_{p^2}, \li^{\otimes d} )}.
	\]

	When $N_r<e^{d^{\alpha_0}}$, we can apply Corollary \ref{thetadensity} and get a constant $K_0>0$ such that
	\begin{eqnarray*}
		& &\left|  \frac{\sum_{\sigma\in \left(\bigcap_{p\leq r}\mathcal{P}_{d,p}\right)} \exp(-\pi \lVert\sigma\rVert^2)}{ \sum_{\sigma \in \mathrm{H}^0(\X, \overline{\mathcal{L}}^{\otimes d})} \exp(-\pi \lVert\sigma\rVert^2) } -\frac{\#E_{d,r}}{\#\ho^0(\X_{N_r}, \li^{\otimes d})}  \right| \\
		&<& 10 \exp \big(K_0d^{n-1}-\pi e^{2(\varepsilon_0-\delta) d}\big)\cdot \frac{\#E_{d,r}}{\#\ho^0(\X_{N_r}, \li^{\otimes d})} \\
		&\leq& 10 \exp \big(K_0d^{n-1}-\pi e^{2(\varepsilon_0-\delta) d}\big).
	\end{eqnarray*}
	Since
	\[
		\frac{\#E_{d,r}}{\#\ho^0(\X_{N_r}, \li^{\otimes d})}=\prod_{p\leq r} \frac{\#\mathcal{P}'_{p^2,d}}{\#\ho^0(\X_{p^2}, \li^{\otimes d} )},
	\]
	we get
	\[
		\left|  \frac{\sum_{\sigma\in \left(\bigcap_{p\leq r}\mathcal{P}_{d,p}\right)} \exp(-\pi \lVert\sigma\rVert^2)}{ \sum_{\sigma \in \mathrm{H}^0(\X, \overline{\mathcal{L}}^{\otimes d})} \exp(-\pi \lVert\sigma\rVert^2) } -\prod_{p\leq r} \frac{\#\mathcal{P}'_{p^2,d}}{\#\ho^0(\X_{p^2}, \li^{\otimes d} )}  \right| <10 \exp \big(K_0d^{n-1}-\pi e^{2(\varepsilon_0-\delta) d}\big).
	\]
	Note that we have by Lemma 5.3 in \cite{Wa22}
	\[
		\left| \prod_{p\leq r} \frac{\#\mathcal{P}'_{p^2,d}}{\#\ho^0(\X_{p^2}, \li^{\otimes d} )} - \prod_{p\leq r} \zeta_{\X_p}(1+n)^{-1} \right| =O\left( \frac{\sum_{p\leq r}p^{\frac{2}{n}}}{d^{\frac{2}{n}}} \right).
	\]
	The conclusion follows as
	\begin{eqnarray*}
		& &\left|  \frac{\sum_{\sigma\in \left(\bigcap_{p\leq r}\mathcal{P}_{d,p}\right)} \exp(-\pi \lVert\sigma\rVert^2)}{ \sum_{\sigma \in \mathrm{H}^0(\X, \overline{\mathcal{L}}^{\otimes d})} \exp(-\pi \lVert\sigma\rVert^2) } -\prod_{p\leq r}\zeta_{\X_p}(n+1)^{-1}  \right| \\
		&\leq& \left|  \frac{\sum_{\sigma\in \left(\bigcap_{p\leq r}\mathcal{P}_{d,p}\right)} \exp(-\pi \lVert\sigma\rVert^2)}{ \sum_{\sigma \in \mathrm{H}^0(\X, \overline{\mathcal{L}}^{\otimes d})} \exp(-\pi \lVert\sigma\rVert^2) } -\prod_{p\leq r} \frac{\#\mathcal{P}'_{p^2,d}}{\#\ho^0(\X_{p^2}, \li^{\otimes d} )}  \right| \\
		& & \qquad\qquad\qquad\qquad+\left| \prod_{p\leq r} \frac{\#\mathcal{P}'_{p^2,d}}{\#\ho^0(\X_{p^2}, \li^{\otimes d} )} - \prod_{p\leq r} \zeta_{\X_p}(1+n)^{-1} \right| \\
		&=& O\left(  \exp \big(K_0d^{n-1}-\pi e^{2(\varepsilon_0-\delta) d}\big) \right)+O\left( \frac{\sum_{p\leq r}p^{\frac{2}{n}}}{d^{\frac{2}{n}}} \right) \\
		&=& O\left( \frac{\sum_{p\leq r}p^{\frac{2}{n}}}{d^{\frac{2}{n}}} \right).
	\end{eqnarray*}
\end{proof}

Now we prove that we can choose $r=d^{\frac{1}{n+1}}$.
\begin{lem}\label{brtheta}
	For large enough integer $d$, we have
	\[
		\left|  \frac{\sum_{\sigma\in\mathcal{P}_{d, p\leq d^{\frac{1}{n+1}}}}\exp(-\pi \lVert\sigma\rVert^2)}{ \sum_{\sigma \in \mathrm{H}^0(\X, \overline{\mathcal{L}}^{\otimes d})} \exp(-\pi \lVert\sigma\rVert^2) } -\prod_{p\leq d^{\frac{1}{n+1}}}\zeta_{\X_p}(n+1)^{-1}  \right|=O(d^{-\frac{1}{n+1} }).
	\]
\end{lem}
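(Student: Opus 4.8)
The plan is to obtain this as a direct corollary of Lemma \ref{manytheta} applied with the choice $r = d^{\frac{1}{n+1}}$, after checking the two hypotheses of that lemma and estimating the prime sum appearing in its error term.

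First I would note that $\mathcal{P}_{d, p\leq d^{\frac{1}{n+1}}} = \bigcap_{p < d^{\frac{1}{n+1}}}\mathcal{P}_{d,p}$: a point $x \in |\mathrm{div}\sigma|$ has residual characteristic $p$ exactly when it lies on $\X_p$, so $\mathrm{div}\sigma$ has no singular point of residual characteristic smaller than $d^{\frac{1}{n+1}}$ if and only if $\mathrm{div}\sigma$ has no singular point on $\X_p$ for every prime $p < d^{\frac{1}{n+1}}$. (Whether one writes $p < d^{\frac{1}{n+1}}$ or $p \leq d^{\frac{1}{n+1}}$ is immaterial; below I set $r = d^{\frac{1}{n+1}}$.) Next I would check that $r = d^{\frac{1}{n+1}}$ satisfies the hypotheses of Lemma \ref{manytheta} once $d$ is large: the condition $Cnr^n < d$ reads $Cn\,d^{\frac{n}{n+1}} < d$, which holds for $d \gg 1$ since $\frac{n}{n+1} < 1$; and Chebyshev's estimate gives $\log N_r = 2\sum_{p \leq r}\log p = O(r) = O(d^{\frac{1}{n+1}})$, so that $N_r < e^{d^{\alpha_0}}$ for $d \gg 1$. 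Lemma \ref{manytheta} then yields
\[
	\left|  \frac{\sum_{\sigma\in\mathcal{P}_{d, p\leq d^{\frac{1}{n+1}}}}\exp(-\pi \lVert\sigma\rVert^2)}{ \sum_{\sigma \in \mathrm{H}^0(\X, \overline{\mathcal{L}}^{\otimes d})} \exp(-\pi \lVert\sigma\rVert^2) } -\prod_{p\leq d^{\frac{1}{n+1}}}\zeta_{\X_p}(n+1)^{-1}  \right| = O\left( \frac{\sum_{p\leq d^{\frac{1}{n+1}}}p^{\frac{2}{n}}}{d^{\frac{2}{n}}} \right).
\]

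It then remains to bound the right-hand side. Using the crude inequality $\sum_{p \leq x}p^{\frac{2}{n}} \leq \sum_{1 \leq m \leq x}m^{\frac{2}{n}} = O(x^{1+\frac{2}{n}})$ with $x = d^{\frac{1}{n+1}}$ gives $\sum_{p \leq d^{\frac{1}{n+1}}}p^{\frac{2}{n}} = O(d^{\frac{n+2}{n(n+1)}})$, and hence
\[
	\frac{\sum_{p\leq d^{\frac{1}{n+1}}}p^{\frac{2}{n}}}{d^{\frac{2}{n}}} = O\left( d^{\frac{n+2}{n(n+1)} - \frac{2}{n}} \right) = O\left( d^{-\frac{1}{n+1}} \right),
\]
since $\frac{n+2}{n(n+1)} - \frac{2}{n} = \frac{(n+2)-2(n+1)}{n(n+1)} = -\frac{1}{n+1}$. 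Combining the two displays gives the statement. The argument is essentially bookkeeping together with one elementary prime-counting bound, so there is no real obstacle here; the only non-automatic input is the choice of the exponent $\frac{1}{n+1}$, which is precisely the value that equalizes (up to the power of $d$) the error term of Lemma \ref{manytheta} with the tail $\prod_{p > d^{\frac{1}{n+1}}}\zeta_{\X_p}(n+1)^{-1}$ that must be controlled separately in the proof of Proposition \ref{smalltheta}.
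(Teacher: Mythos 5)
Your proof is correct and follows essentially the same route as the paper: identify $\mathcal{P}_{d,p\leq d^{1/(n+1)}}$ with the intersection $\bigcap_{p\leq d^{1/(n+1)}}\mathcal{P}_{d,p}$, verify the hypotheses of Lemma \ref{manytheta} for $r=d^{1/(n+1)}$, and then bound $\sum_{p\leq r}p^{2/n}$ by $r^{(n+2)/n}$ to get the exponent $-\tfrac{1}{n+1}$. The only cosmetic difference is that you control $N_r$ via Chebyshev's estimate $\sum_{p\leq r}\log p = O(r)$, whereas the paper uses the cruder factorial bound $N_r\leq (r!)^2\leq \exp(2r\log r)$; both suffice, and yours is arguably cleaner.
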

\begin{proof}
	The proof follows that of Lemma 5.4 in \cite{Wa22}. 
	Whenever $2r\log r<d^{\alpha_0}$, we have
	\[
		N_r=\prod_{p\leq r}p^2\leq \prod_{k=1}^r k^2=(r!)^2<2 r^r=2\exp(r\log r)< \exp(d^{\alpha_0}).
	\]
	If moreover $r$ satisfies $Cnr^n<d$ for the constant $C$ given in Theorem \ref{fiber}, then by Lemma \ref{manytheta}, we have
	\[
		\left|  \frac{\sum_{\sigma\in \left(\bigcap_{p\leq r}\mathcal{P}_{d,p}\right)} \exp(-\pi \lVert\sigma\rVert^2)}{ \sum_{\sigma \in \mathrm{H}^0(\X, \overline{\mathcal{L}}^{\otimes d})} \exp(-\pi \lVert\sigma\rVert^2) } -\prod_{p\leq r}\zeta_{\X_p}(n+1)^{-1}  \right| =O\left( \frac{\sum_{p\leq r}p^{\frac{2}{n}}}{d^{\frac{2}{n}}} \right).
	\]
	
	Now as above, 
	\begin{eqnarray*}
		\sum_{p\leq r}p^{\frac{2}{n}} \leq \sum_{k=1}^r k^{\frac{2}{n}} < r\cdot  r^{\frac{2}{n}}= r^{\frac{n+2}{n}}.
	\end{eqnarray*}
	Thus for $r=d^{\frac{1}{n+1}}$ we have
	\[
		\sum_{p\leq d^{\frac{1}{n+1}}}p^{\frac{2}{n}}<d^{\frac{n+2}{n(n+1)}}=O(d^{\frac{n+2}{n(n+1)}}).
	\]
	It's easy to see that $r=d^{\frac{1}{n+1}}$ also satisfies conditions $2r\log r<d^{\alpha_0}$, $Cnr^n<d$ for large $d$. For this $r$, we have
	\[
		\mathcal{P}_{d, p\leq d^{\frac{1}{n+1}}}=\bigcap_{p\leq d^{\frac{1}{n+1}}}\mathcal{P}_{d,p},
	\]
	and hence
	\begin{eqnarray*}
		\left|  \frac{\sum_{\sigma\in\mathcal{P}_{d, p\leq d^{\frac{1}{n+1}}}}\exp(-\pi \lVert\sigma\rVert^2)}{ \sum_{\sigma \in \mathrm{H}^0(\X, \overline{\mathcal{L}}^{\otimes d})} \exp(-\pi \lVert\sigma\rVert^2) } -\prod_{p\leq d^{\frac{1}{n+1}}}\zeta_{\X_p}(n+1)^{-1}  \right| &=&O\left( \frac{\sum_{p\leq d^{\frac{1}{n+1}}}p^{\frac{2}{n}}}{d^{\frac{2}{n}}} \right)\\
		&=&O(d^{\frac{n+2}{n(n+1)}-\frac{2}{n}})=O(d^{-\frac{1}{n+1} }).
	\end{eqnarray*}
\end{proof}

\begin{proof}[Proof of Proposition \ref{smalltheta}]
	We know that (Lemma 3.4 in \cite{Wa22}) when $R\in \Z_{>0}$ is large enough, we have 
	\[
		\left| \prod_{p\leq R}\zeta_{\X_p}(n+1)^{-1}-\zeta_\X(n+1)^{-1} \right| < 8c_0 \zeta_\X(n+1)^{-1}\cdot R^{-1}. 
	\]
	Applying this result and Lemma \ref{brtheta}, for sufficiently large $d$, we have
	\begin{eqnarray*}
		& &\left|  \frac{\sum_{\sigma\in\mathcal{P}_{d, p\leq d^{\frac{1}{n+1}}}}\exp(-\pi \lVert\sigma\rVert^2)}{ \sum_{\sigma \in \mathrm{H}^0(X, \overline{\mathcal{L}}^{\otimes d})} \exp(-\pi \lVert\sigma\rVert^2) } -\zeta_\X(n+1)^{-1} \right| \\
		&\leq &\left|  \frac{\sum_{\sigma\in\mathcal{P}_{d, p\leq d^{\frac{1}{n+1}}}}\exp(-\pi \lVert\sigma\rVert^2)}{ \sum_{\sigma \in \mathrm{H}^0(X, \overline{\mathcal{L}}^{\otimes d})} \exp(-\pi \lVert\sigma\rVert^2) } -\prod_{p\leq d^{\frac{1}{n+1}}}\zeta_{\X_p}(n+1)^{-1}  \right|\\
		& &\qquad\qquad\qquad\qquad\qquad+\left| \prod_{p\leq d^{\frac{1}{n+1}} }\zeta_{\X_p}(n+1)^{-1}-\zeta_\X(n+1)^{-1} \right| \\
		&=&O(d^{-\frac{1}{n+1} })+O(d^{-\frac{1}{n+1} })=O(d^{-\frac{1}{n+1} }).
	\end{eqnarray*}
	Hence we conclude the proof of Proposition \ref{smalltheta}.
\end{proof}
\subsection{Singularities of large residual characteristic}
The following result is Proposition 6.1 in \cite{Wa22}.
\begin{prop}\label{singmed}
	Let $\X$ be a regular projective arithmetic variety of dimension $n$, and let $\li$ be an ample line bundle on $\X$. Then there exists a constant $c>0$ such that for any $d\gg0$ and any prime number $p$ such that $\X_p$ is smooth and irreducible, writing
	\[
		\mathcal{Q}_{d,p^2}:=\left\{ \sigma\in \ho^0(\X_{p^2}, \li^{\otimes d})\ ;\ \exists x\in |\X_{p^2}|,\ \dim_{\kappa(x)}\frac{\mathfrak{m}_{\mathrm{div}\sigma,x}}{\mathfrak{m}_{\mathrm{div}\sigma,x}^2}=n \right\},
	\]
	we have
	\[
		\frac{\#\mathcal{Q}_{d,p^2}}{\#\ho^0(\X_{p^2}, \li^{\otimes d})}\leq c\cdot p^{-2}.
	\]
\end{prop}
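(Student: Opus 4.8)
The plan is to run a closed-point sieve on the thickened fibre $\X_{p^2}$, reducing the estimate to a sum of local "bad" probabilities indexed by the closed points of $\X_p$. First I would set up the local picture. Since embedding dimension is upper semicontinuous along specialisation, a section $\sigma$ lies in $\mathcal{Q}_{d,p^2}$ if and only if $\mathrm{div}\sigma$ has embedding dimension $n$ at some \emph{closed} point $x$ of $\X_{p^2}$ (equivalently of $\X_p$). Fix such an $x$; since $\X$ is regular and $\X_p$ is smooth over $\F_p$, one can choose a regular system of parameters $p,t_1,\dots,t_{n-1}$ of $\Oc_{\X,x}$, and then, after trivialising $\li$ locally, a local generator $f$ of $\sigma$ satisfies $f\in\m_x^2\subset\Oc_{\X_{p^2},x}$ if and only if (a) $\bar\sigma:=\sigma\bmod p$ vanishes to order $2$ at $x$ in $\X_p$, i.e. $\mathrm{div}\bar\sigma$ is singular at $x$, and (b) the "$p$-derivative" of a chosen lift of $\bar\sigma$ vanishes at $x$. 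Counted over $\kappa(x)$ this is $n$ conditions from (a) and $1$ from (b), which is what produces the exponent $n+1$. Writing $M=\#\ho^0(\X_p,\li^{\otimes d})$, the exact sequence $0\to\li^{\otimes d}|_{\X_p}\xrightarrow{\,p\,}\li^{\otimes d}|_{\X_{p^2}}\to\li^{\otimes d}|_{\X_p}\to0$ together with $H^1(\X_p,\li^{\otimes d}|_{\X_p})=0$ (valid for $d$ large, uniformly in $p$, by Serre vanishing over $\Spec\ \Z$) gives $\#\ho^0(\X_{p^2},\li^{\otimes d})=M^2$ and lets one pass freely between $\X_{p^2}$ and $\X_p$.

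Next, the low-degree points. Fix a constant $c_0$ depending only on $(\X,\li)$ such that for $d\geq c_0\deg x$ the restriction map $\ho^0(\X_{p^2},\li^{\otimes d})\to(\Oc_{\X_{p^2}}/\mathcal{I}_x^2)(x)\otimes\li^{\otimes d}$ is surjective onto an $(n+1)$-dimensional $\kappa(x)$-vector space; this is an effective Serre-vanishing statement, uniform over the family $\X\to\Spec\ \Z$, coming from the fact that the Castelnuovo--Mumford regularity of $\mathcal{I}_x^2$ is $O(\deg x)$. For such $x$ the number of $\sigma$ that are bad at $x$ is exactly $M^2\cdot p^{-(n+1)\deg x}$, and using the crude bound $\#\{x:\deg x=e\}\leq C_1 p^{(n-1)e}$ (from Noether normalisation of $\X_p$, which has dimension $n-1$, or from Lang--Weil) one gets
\[
	\sum_{\deg x\leq d/c_0}\frac{\#\{\sigma\text{ bad at }x\}}{M^2}\leq C_1\sum_{e\geq1}p^{(n-1)e}p^{-(n+1)e}=C_1\sum_{e\geq1}p^{-2e}\leq 2C_1 p^{-2}.
\]
This is precisely the term responsible for the bound $c\cdot p^{-2}$, and it converges regardless of the upper cut-off.

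Finally, the high-degree points $\deg x>d/c_0$. When $\deg x$ is so large that the length of the double point $V(\m_x^2)\subset\X_{p^2}$ exceeds $\dim_{\F_p}\ho^0(\X_{p^2},\li^{\otimes d})$ — i.e. $\deg x\gtrsim d^{n-1}$ — no nonzero $\sigma$ can be bad at $x$. For the remaining range I would run the Poonen-type argument of \cite{Wa20} (following \cite{CP16}): if $\mathrm{div}\bar\sigma$ is singular at $x$ then $\bar\sigma$ together with a fixed finite collection of "derivatives" (sections of $\li^{\otimes d}$ twisted by fixed sheaves) all vanish at $x$; for all but a negligible proportion of $\bar\sigma$ these derivatives cut out a zero-dimensional subscheme of $\X_p$ of degree $O(d^{n-1})$ — the implied constant coming from intersection numbers on $\X$ and hence independent of $p$ — so it contains only $O(d^{n-2})$ closed points of degree $>d/c_0$, and, combined with the factor $p^{-1}$ available from the extra $p$-adic condition (b), their total contribution is absorbed into $O(p^{-2})$. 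I expect \emph{this last step to be the main obstacle}: it is where one must make Poonen's high-degree estimate effective and uniform in $p$, control the exceptional locus where the derivative scheme is positive-dimensional, and verify that, even when $p$ is large compared with $d$, this tail does not dominate the $p^{-2}$ produced by the low-degree part — this is the technical core of Proposition 6.1 of \cite{Wa20}. Adding the low- and high-degree contributions then yields $\#\mathcal{Q}_{d,p^2}/\#\ho^0(\X_{p^2},\li^{\otimes d})\leq c\cdot p^{-2}$.
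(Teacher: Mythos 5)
This proposition is not proved in the present paper. The text says only ``The following result is Proposition 6.1 in \cite{Wa20}'' and then uses it as a black box, so there is no argument here to compare your proposal against; what follows assesses your reconstruction on its own terms.

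Your local structure and low-degree sieve are correct, and they do account for the exponent in $p^{-2}$. Because $\X$ is regular and $\X_p$ is smooth over $\F_p$, the local ring $\Oc_{\X,x}$ at any closed point $x\in|\X_p|$ has a regular system of parameters $p,t_1,\dots,t_{n-1}$, so $\mathrm{div}\sigma$ has embedding dimension $n$ at $x$ exactly when a local generator of $\sigma$ lies in $\m_x^2\cdot\Oc_{\X_{p^2},x}$. The quotient $\Oc_{\X_{p^2},x}/\m_x^2$ has $\kappa(x)$-dimension $n+1$ (value, $n-1$ horizontal derivatives, and the $p$-derivative), and once the evaluation map from $\ho^0(\X_{p^2},\li^{\otimes d})$ onto this quotient is surjective --- which follows, for $\deg x\leq d/c_0$, from an effective Serre vanishing statement uniform over $\Spec\ \Z$ --- each such $x$ contributes $p^{-(n+1)\deg x}$; summing against $\#\{x:\deg x=e\}=O(p^{(n-1)e})$ on the $(n-1)$-dimensional scheme $\X_p$ yields the geometric series $\sum_{e\geq1}p^{-2e}=O(p^{-2})$, which is the source of the stated bound. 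The cardinality identity $\#\ho^0(\X_{p^2},\li^{\otimes d})=M^2$ via the $p$-multiplication exact sequence and $H^1$-vanishing is also correct.

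The genuine gap --- which you flag yourself --- is the range $\deg x>d/c_0$. The sketch you give (run a Poonen/Charles--Poonen sieve, use that the derivative scheme of a generic $\bar\sigma$ is zero-dimensional of degree $O(d^{n-1})$, invoke an ``extra $p^{-1}$'' from the $p$-adic condition) is in the right spirit, but what is actually required is a bound uniform in \emph{both} $p$ and $d$, and that is delicate in a way your sketch does not address. When $p\gg d$ the degree-one points already saturate $O(p^{-2})$, so the exceptional locus where the derivative scheme is positive-dimensional must contribute $o(p^{-2})$ with no slack to spare; when $p\ll d$ the medium-degree tail has to be quantified carefully, and ``all but a negligible proportion'' must be made into an explicit set whose measure is itself $O(p^{-2})$ uniformly. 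Moreover, the ``extra $p^{-1}$'' from condition (b) only combines usefully with the $\bmod\,p$ estimate if one already knows the latter is $O(p^{-1})$ uniformly in $d$, which is precisely the content of the uniform effective Bertini bound in \cite{Wa20} and is not a corollary of \cite{Po04} or \cite{CP16} as stated. You have correctly located the technical core, but your proposal does not supply the uniform effective estimate that closes it.
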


This proposition helps us to obtain the following result :
\begin{prop}\label{medsingtheta}
	Let $\X$ be a regular projective arithmetic variety of absolute dimension $n$, and let $\Li$ be an ample Hermitian line bundle on $\X$.  Let $\varepsilon_0$ be the constant found in Proposition \ref{epsilon}. Choose $0<\varepsilon<\frac{1}{2}\varepsilon_0$ and set 
	\[
		\mathcal{Q}_{d}^m:=\left\{ \sigma\in \ho^0(\X,\Li^{\otimes d})\ ;\ \begin{array}{ll}
		\mathrm{div}\sigma \text{ has a singular point of residual} \\
		\text{characteristic between } d^{\frac{1}{n+1}} \text{ and } e^{\varepsilon d}
		\end{array}\right\}.
	\]
	
	When $d$ is sufficiently large, we have
	\[
		\frac{\sum_{\sigma\in\mathcal{Q}_{d}^m}\exp(-\pi \lVert\sigma\rVert^2)}{ \sum_{\sigma \in \mathrm{H}^0(\X, \overline{\mathcal{L}}^{\otimes d})} \exp(-\pi \lVert\sigma\rVert^2) } = O(d^{-\frac{1}{n+1} }).
	\]
	Here the constant involved in the big $O$ depends only on $\X$.
	
	In particular, with $\mathcal{Q}^m=\bigcup_{d>0}\mathcal{Q}_{d}^m$, we have
	\[
		\mu_{\theta}(\mathcal{Q}^m)=0.
	\]
\end{prop}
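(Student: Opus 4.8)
The plan is to estimate the $\theta$-mass of $\mathcal{Q}^m_d$ one prime at a time, reducing each prime's contribution to the single-fiber bound of Proposition \ref{singmed} by means of Corollary \ref{thetadensity}, and then summing over the relevant range of primes. For a prime $p$, denote by $\mathcal{Q}_{d,p}\subset\ho^0(\X,\Li^{\otimes d})$ the set of sections $\sigma$ such that $\mathrm{div}\,\sigma$ has a singular point of residue characteristic $p$. As in the remark following Proposition \ref{fiber}, whether $\mathrm{div}\,\sigma$ is singular at a point $x$ lying over $p$ depends only on the class of $\sigma$ in $\Oc_{\X,x}/\m_x^2$, and since $p\in\m_x$ forces $(p^2)\subset\m_x^2$, it depends only on $\phi_{d,p^2}(\sigma)$; moreover the singular locus being closed, a point of it may be taken closed in $\X_p$, where $\Oc_{\X,x}$ is regular of dimension $n$, so that singularity there means embedding dimension equal to $n$. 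Hence $\mathcal{Q}_{d,p}=\phi_{d,p^2}^{-1}(\mathcal{Q}_{d,p^2})$, with $\mathcal{Q}_{d,p^2}$ as in Proposition \ref{singmed}.

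Since $\X$ is regular, $\X_{\Q}$ is smooth over $\Q$, and, by spreading out together with properness, $\X_p$ is smooth and irreducible for all but finitely many $p$; because $d^{1/(n+1)}\to\infty$, for $d$ sufficiently large every prime $p$ in the range $d^{1/(n+1)}\le p\le e^{\varepsilon d}$ has this property, so Proposition \ref{singmed} applies and gives
\[
	\frac{\#\mathcal{Q}_{d,p^2}}{\#\ho^0(\X_{p^2},\li^{\otimes d})}\le c\,p^{-2}.
\]
Next I would fix a constant $\delta$ with $2\varepsilon\le\delta<\varepsilon_0$, possible since $\varepsilon<\tfrac12\varepsilon_0$. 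Every prime $p\le e^{\varepsilon d}$ satisfies $p^2\le e^{2\varepsilon d}\le e^{\delta d}$, so Corollary \ref{thetadensity} applies with $N=p^2$, uniformly in $p$: there is a threshold for $d$, independent of $p$, beyond which
\[
	\frac{\sum_{\sigma\in\mathcal{Q}_{d,p}}e^{-\pi\lVert\sigma\rVert^2}}{\sum_{\sigma\in\ho^0(\X,\Li^{\otimes d})}e^{-\pi\lVert\sigma\rVert^2}}\ \le\ 2\,\frac{\#\mathcal{Q}_{d,p^2}}{\#\ho^0(\X_{p^2},\li^{\otimes d})}\ \le\ 2c\,p^{-2}.
\]

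Finally, since $\mathcal{Q}^m_d=\bigcup_{d^{1/(n+1)}\le p\le e^{\varepsilon d}}\mathcal{Q}_{d,p}$, summing the previous bound over primes $p\ge d^{1/(n+1)}$ and using $\sum_{m\ge X}m^{-2}=O(X^{-1})$ with $X=d^{1/(n+1)}$ yields
\[
	\frac{\sum_{\sigma\in\mathcal{Q}^m_d}e^{-\pi\lVert\sigma\rVert^2}}{\sum_{\sigma\in\ho^0(\X,\Li^{\otimes d})}e^{-\pi\lVert\sigma\rVert^2}}\ \le\ 2c\sum_{p\ge d^{1/(n+1)}}p^{-2}\ =\ O\!\left(d^{-\frac{1}{n+1}}\right),
\]
with implied constant depending only on $\X$; letting $d\to\infty$ then gives $\mu_{\theta}(\mathcal{Q}^m)=0$. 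The only delicate point is the uniformity in $p$ of the application of Corollary \ref{thetadensity}: its error is controlled by $\exp(K_0 d^n-\pi e^{2(\varepsilon_0-\delta)d})$, which must be negligible for all $p$ at once, and this is guaranteed precisely by keeping $p^2\le e^{\delta d}$ for a single fixed $\delta<\varepsilon_0$, i.e.\ by the hypothesis $\varepsilon<\tfrac12\varepsilon_0$. The finitely many primes of bad reduction need no separate treatment, since for $d$ large they lie below the cutoff $d^{1/(n+1)}$ and hence contribute nothing to $\mathcal{Q}^m_d$.
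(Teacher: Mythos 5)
Your proof is correct and follows essentially the same route as the paper: bound the $\theta$-mass of $\mathcal{Q}^m_d$ by summing over primes, apply Corollary \ref{thetadensity} with $N=p^2$ (which is legitimate precisely because $p^2\leq e^{2\varepsilon d}\leq e^{\delta d}$ for a fixed $\delta<\varepsilon_0$), invoke Proposition \ref{singmed} for the single-fiber bound $c p^{-2}$, and sum. Your version is slightly more explicit than the paper's on two minor points — that the primes of bad reduction fall below $d^{1/(n+1)}$ for large $d$, and that $\delta$ can be chosen with $2\varepsilon\leq\delta<\varepsilon_0$ because $\varepsilon<\tfrac12\varepsilon_0$ — whereas the paper slightly awkwardly \emph{sets} $\varepsilon=\delta/2$ inside the proof; but these are presentational, not substantive, differences.
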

\begin{proof}
	We have in fact
	\begin{eqnarray*}
		\frac{\sum_{\sigma\in\mathcal{Q}_{d}^m}\exp(-\pi \lVert\sigma\rVert^2)}{ \sum_{\sigma \in \mathrm{H}^0(\X, \overline{\mathcal{L}}^{\otimes d})} \exp(-\pi \lVert\sigma\rVert^2) }
		\leq \sum_{d^{\frac{1}{n+1}}\leq p \leq e^{\varepsilon d}} \frac{\sum_{\sigma\in \phi_{d, p^2}^{-1}\left( \mathcal{Q}_{d,p^2}\right) } \exp(-\pi \lVert\sigma\rVert^2) }{ \sum_{\sigma \in \mathrm{H}^0(\X, \overline{\mathcal{L}}^{\otimes d})} \exp(-\pi \lVert\sigma\rVert^2) }.
	\end{eqnarray*}
	Applying Corollary \ref{thetadensity} to the case $N=p^2$ and $E=\mathcal{Q}_{d,p^2}$, we obtain that there exists a constant $\delta>0$ such that when $d$ is large enough and $p^2\leq e^{\delta d}$,
	\begin{eqnarray*}
		 \frac{\sum_{\sigma\in \phi_{d, p^2}^{-1}\left( \mathcal{Q}_{d,p^2}\right) } \exp(-\pi \lVert\sigma\rVert^2) }{ \sum_{\sigma \in \mathrm{H}^0(\X, \overline{\mathcal{L}}^{\otimes d})} \exp(-\pi \lVert\sigma\rVert^2) }< 2\frac{\#\mathcal{Q}_{d,p^2} }{\#\mathrm{H}^0(\X_{p^2}, \li^{\otimes d})}.
	\end{eqnarray*}
	Since $\X$ is a regular arithmetic variety, it is irreducible and generically smooth. So if $d$ is large enough, for any prime number $p\geq d^{\frac{1}{n+1}}$, $\X_p$ is irreducible and smooth over $\F_p$. Then Proposition \ref{singmed} tells us that there exists a constant $c>0$ such that for any such $p$,
	\[
		\frac{\#\mathcal{Q}_{d,p^2} }{\#\mathrm{H}^0(\X_{p^2}, \li^{\otimes d})}\leq cp^{-2}.
	\]
	Setting $\varepsilon=\frac{\delta}{2}$, we get
	\begin{eqnarray*}
		\frac{\sum_{\sigma\in\mathcal{Q}_{d}^m}\exp(-\pi \lVert\sigma\rVert^2)}{ \sum_{\sigma \in \mathrm{H}^0(\X, \overline{\mathcal{L}}^{\otimes d})} \exp(-\pi \lVert\sigma\rVert^2) } &\leq& \sum_{d^{\frac{1}{n+1}}\leq p \leq e^{\varepsilon d}} \frac{\sum_{\sigma\in \phi_{d, p^2}^{-1}\left( \mathcal{Q}_{d,p^2}\right) } \exp(-\pi \lVert\sigma\rVert^2) }{ \sum_{\sigma \in \mathrm{H}^0(\X, \overline{\mathcal{L}}^{\otimes d})} \exp(-\pi \lVert\sigma\rVert^2) }\\
		&<&\sum_{d^{\frac{1}{n+1}}\leq p \leq e^{\varepsilon d}}2\frac{\#\mathcal{Q}_{d,p^2} }{\#\mathrm{H}^0(\X_{p^2}, \li^{\otimes d})} \\
		&\leq& \sum_{d^{\frac{1}{n+1}}\leq p \leq e^{\varepsilon d}} 2cp^{-2} \\
		&<& 2c d^{-\frac{1}{n+1}}.
	\end{eqnarray*}
	Hence we conclude.
\end{proof}

\subsection{Proof of Theorem \ref{mainvartheta}}
With the estimate proved above, we are now able to prove Theorem \ref{mainvartheta}.
\begin{proof}[Proof of Theorem \ref{mainvartheta}]
	As $\mathcal{P}_A\subset \mathcal{P}_B\subset \mathcal{P}_A\cup \mathcal{Q}^m$, we have 
	\begin{eqnarray*}
		& &\left|  \frac{\sum_{\sigma\in\mathcal{P}_{d,p\leq e^{\varepsilon d}}}\exp(-\pi \lVert\sigma\rVert^2)}{ \sum_{\sigma \in \mathrm{H}^0(\X, \overline{\mathcal{L}}^{\otimes d})} \exp(-\pi \lVert\sigma\rVert^2) } -  \frac{\sum_{\sigma\in\mathcal{P}_{d, p\leq d^{\frac{1}{n+1}}}}\exp(-\pi \lVert\sigma\rVert^2)}{ \sum_{\sigma \in \mathrm{H}^0(\X, \overline{\mathcal{L}}^{\otimes d})} \exp(-\pi \lVert\sigma\rVert^2) }  \right|\\
		&\leq& \frac{\sum_{\sigma\in\mathcal{Q}_{d}^m}\exp(-\pi \lVert\sigma\rVert^2)}{ \sum_{\sigma \in \mathrm{H}^0(\X, \overline{\mathcal{L}}^{\otimes d})} \exp(-\pi \lVert\sigma\rVert^2) }\\
		&=&O(d^{-\frac{1}{n+1} })
	\end{eqnarray*}
	by Proposition \ref{medsingtheta}, and so
	\begin{eqnarray*}
		& &\left| \frac{\sum_{\sigma\in\mathcal{P}_{d,p\leq e^{\varepsilon d}}}\exp(-\pi \lVert\sigma\rVert^2)}{ \sum_{\sigma \in \mathrm{H}^0(\X, \overline{\mathcal{L}}^{\otimes d})} \exp(-\pi \lVert\sigma\rVert^2) } -\zeta_\X(n+1)^{-1} \right|\\
		&=&\left|  \frac{\sum_{\sigma\in\mathcal{P}_{d,p\leq e^{\varepsilon d}}}\exp(-\pi \lVert\sigma\rVert^2)}{ \sum_{\sigma \in \mathrm{H}^0(\X, \overline{\mathcal{L}}^{\otimes d})} \exp(-\pi \lVert\sigma\rVert^2) } -  \frac{\sum_{\sigma\in\mathcal{P}_{d, p\leq d^{\frac{1}{n+1}}}}\exp(-\pi \lVert\sigma\rVert^2)}{ \sum_{\sigma \in \mathrm{H}^0(\X, \overline{\mathcal{L}}^{\otimes d})} \exp(-\pi \lVert\sigma\rVert^2) }  \right|\\
		& & \qquad\qquad+\left|  \frac{\sum_{\sigma\in\mathcal{P}_{d, p\leq d^{\frac{1}{n+1}}}}\exp(-\pi\lVert\sigma\rVert^2)}{ \sum_{\sigma \in \mathrm{H}^0(\X, \overline{\mathcal{L}}^{\otimes d})} \exp(-\pi \lVert\sigma\rVert^2) } -\zeta_\X(n+1)^{-1} \right|\\ 
		&=& O(d^{-\frac{1}{n+1} })+O(d^{-\frac{1}{n+1} })=O(d^{-\frac{1}{n+1} }).
	\end{eqnarray*}
	This implies the conclusion
	\[
		\mu_{\theta}(\mathcal{P}_A)=\mu_{\theta}(\mathcal{P}_B)=\zeta_{\X}(1+n)^{-1}.
	\]	
\end{proof}

\end{document}